\newtheorem{theorem}{Theorem}[section]
\newtheorem{lemma}[theorem]{Lemma}
\newtheorem{definition}[theorem]{Definition}
\newtheorem{corollary}[theorem]{Corollary}
\newtheorem{algorithm}[theorem]{Algorithm}
\title{Riemann-Roch Theory for Graph Orientations}
\author{Spencer Backman}
\keywords{Chip-firing, partial graph orientation, cycle-cocycle reversal system, Dhar's algorithm, Riemann-Roch theorem for graphs, rank-determining set, max-flow min-cut theorem.}
\begin{document}

\bibliographystyle{plain}

\begin{abstract}
We develop a new framework for investigating linear equivalence of
divisors on graphs using a generalization of Gioan's cycle--cocycle
reversal system for partial orientations. An oriented version of
Dhar's burning algorithm is introduced and employed in the study of
acyclicity for partial orientations. We then show that the
Baker--Norine rank of a partially orientable divisor is one less than
the minimum number of directed paths which need to be reversed in the
generalized cycle--cocycle reversal system to produce an acyclic partial
orientation. These results are applied in providing new proofs of the
Riemann--Roch theorem for graphs as well as Luo's topological
characterization of rank-determining sets. We prove that the max-flow
min-cut theorem is equivalent to the Euler characteristic description
of orientable divisors and extend this characterization to the setting
of partial orientations. Furthermore, we demonstrate that
$Pic^{g-1}(G)$ is canonically isomorphic as a $Pic^{0}(G)$-torsor to
the equivalence classes of full orientations in the cycle--cocycle
reversal system acted on by directed path reversals. Efficient
algorithms for computing break divisors and constructing partial
orientations are presented.

\end{abstract}

\maketitle

\tableofcontents

\section{Introduction}

Baker and Norine \cite{baker2007riemann} introduced a combinatorial
Riemann--Roch theorem for graphs analogous to the classical statement
for Riemann surfaces. For proving the theorem, they employed
chip-firing, a deceptively simple game on graphs with connections to
various areas of mathematics.
Given a graph $G$, we define a \textit{configuration of chips} $D$ on
$G$ as a function from the vertices to the integers. A vertex $v$
\textit{fires} by sending a chip to each of its neighbors, losing its
degree number of chips in the process. If we take $D$ to be a vector,
firing the vertex $v_{i}$ precisely corresponds to subtracting the
$i$th column of the Laplacian matrix from $D$. In this way we may view
chip-firing as a combinatorial language for describing the integer
translates of the lattice generated by the columns of the Laplacian
matrix, e.g.
\cite{amini2010riemann,nagnibeda1997roland}.

Reinterpreting chip configurations as \textit{divisors}, we say that
two divisors are \textit{linearly equivalent} if one can be obtained
from the other by a sequence of chip-firing moves, and a divisor is
\textit{effective} if each vertex has a nonnegative number of chips.
Baker and Norine define the \textit{rank} of a divisor, denoted
$r(D)$, to be one less than the minimum number of chips which need to
be removed so that $D$ is no longer equivalent to an effective divisor.
Defining the \textit{canonical divisor} $K$ to have values $K(v) =
\mathrm{deg}(v)-2$, the \textit{genus} of $G$ to be $g =
|E(G)|-|V(G)|+1$, and the \textit{degree} $\mathrm{deg}(D)$ of a divisor $D$ to be the total number of chips in $D$, they prove the Riemann--Roch formula for graphs:

\begin{theorem}[Baker--Norine \cite{baker2007riemann}]
\begin{eqnarray*}r(D)-r(K-D) = \mathrm{deg}(D) -g+1.
\end{eqnarray*}
\end{theorem}

Baker and Norine's proof depends in a crucial way on the theory of
\textit{$q$-reduced divisors}, known elsewhere as \textit{$G$-parking
functions} or \textit{superstable configurations} \cite{cori2002polynomial,postnikov2004trees}. A divisor $D$ is said to be
\textit{$q$-reduced} if $(i) \, D(v) \geq0$ for all $v \neq q$, and
$(ii)$ for any non-empty subset $A \subset V(G) \setminus\{q\}$,
firing the set $A$ causes some vertex in $A$ to go into debt, i.e., to
have a negative number of chips. They show that every divisor $D$ is
linearly equivalent to a unique $q$-reduced divisor $D'$, and $r(D)
\geq0$ if and only if $D'$ is effective. We note that $q$-reduced
divisors are dual, in a precise sense, to the \textit{recurrent
configurations} (also known as \textit{$q$-critical configurations}),
which play a prominent role in the abelian sandpile model
\cite[Lemma~5.6]{baker2007riemann}

There is a second story, which runs parallel to that of chip-firing,
describing certain constrained reorientations of graphs first
introduced by Mosesian \cite{mosesian1972strongly} in the context of
Hasse diagrams for posets. Given an acyclic orientation of a graph
$\mathcal{O}
$ and a sink vertex $q$, we can perform a \textit{sink reversal},
reorienting all of the edges incident to $q$. This operation is
directly connected to the theory of chip-firing: we can associate to
$\mathcal{O}$ a divisor $D_{\mathcal{O}}$ with entries $D_{\mathcal
{O}} (v) = \mathrm{indeg}_{\mathcal{O}
}(v) -1$, and performing a sink reversal at $v_{i}$ we obtain the
orientation $\mathcal{O}'$ with associated divisor $D_{\mathcal{O}'}$
given by the
firing of $v_{i}$. Mosesian observed that, provided an acyclic
orientation $\mathcal{O}$ and a vertex $q$, there exists a unique acyclic
orientation $\mathcal{O}'$ having $q$ as the unique sink, which is obtained
from $\mathcal{O}$ by sink reversals. The divisors associated to (the reverse
of) these $q$-rooted acyclic orientations are the maximal noneffective
$q$-reduced divisors. This connection between acyclic orientations and
chip-firing dates back at least to Bj\"{o}rner, Lov\'{a}sz, and Shor's
seminal paper on the topic \cite{bjorner1991chip}, and has been
utilized in recent proofs of the Riemann--Roch
formula \cite{amini2012linear,cori2013riemann,mikhalkin465tropical}.

Gioan \cite{gioan2007enumerating} generalized this setup to arbitrary
(not necessarily acyclic) orientations by introducing the \textit
{cocycle reversal}, wherein all of the edges in a consistently oriented
cut can be reversed, and a \textit{cycle reversal}, in which the edges
in a consistently oriented cycle can be reversed. Using these two
operations, he defined the \textit{cycle--cocycle reversal system} as
the collection of full orientations modulo cycle and cocycle reversals,
and proved that the number of equivalence classes in this system is
equal to the number of spanning trees of the underlying graph.
He also showed that each orientation is equivalent in the cocycle
reversal system to a unique \textit{$q$-connected orientation}. These
are the orientations in which every vertex is reachable from $q$ by a
directed path. Gioan and Las Vergnas \cite{gioan2005activity}, and
Bernardi \cite{bernardi2008tutte}, combined these results, presenting
an explicit bijections between the $q$-connected orientations with a
standardized choice of the orientation's cyclic part and spanning trees
of a graph. Bernardi's bijection is determined by a choice of
``combinatorial map'', which is essentially a combinatorial embedding of
a graph in a surface. Recently, An, Baker, Kuperberg, and Shokrieh \cite{an2013canonical} showed that the divisors associated to the
$q$-connected orientations are precisely the break divisors of
Mikhalkin and Zharkov \cite{mikhalkin465tropical} offset by a chip at
$q$. They then applied this observation to give a tropical ``volume
proof'' of Kirchoff's matrix-tree theorem via a canonical polyhedral
decomposition of $\mathrm{Pic}^{g}(G)$, the collection of divisors of
degree $g$ modulo linear equivalence.

A limitation of the orientation-based perspective is that the divisor
associated to an orientation will always have degree $g-1$. In this
work, we introduce a generalization of the cycle--cocycle reversal
system for investigating \textit{partial orientations}, thus allowing
for a discussion of divisors with degrees less than $g-1$.
The \textit{generalized cycle--cocycle reversal system} is defined by
the introduction of \textit{edge pivots}, whereby an edge $(u,v)$
oriented towards $v$ is unoriented and an unoriented edge $(w,v)$ is
oriented towards $v$ (see Fig.~\ref{pivotalcycle-cocyclenoshadow}).
Note that edge pivots, as with cycle reversals, leave the divisor
associated to a partial orientation unchanged. We demonstrate that this
additional operation is dynamic enough to allow for a characterization
of linear equivalence.
%

\begin{figure}
\includegraphics[scale=.6]{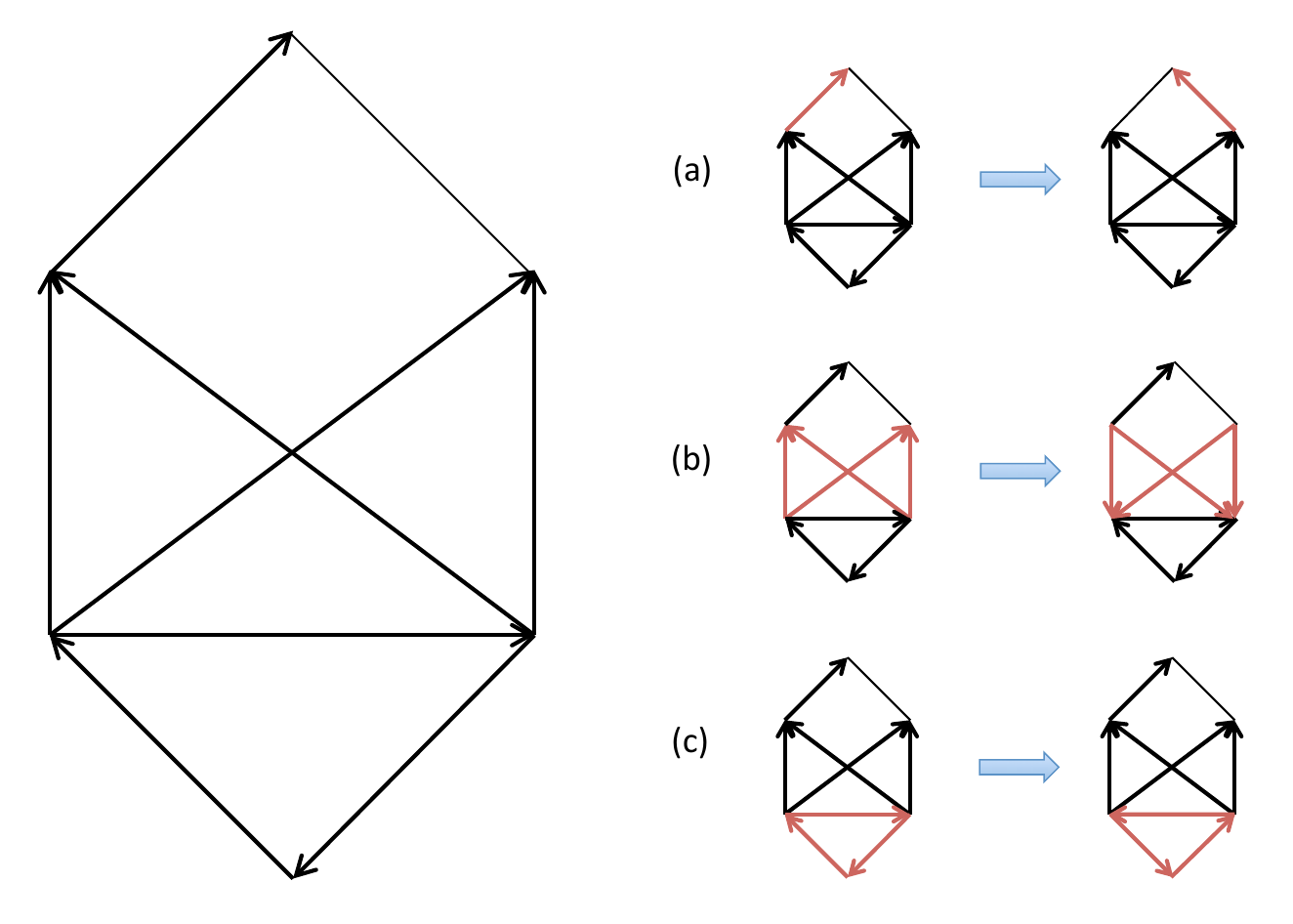}
\caption{A partial orientation with (a) an edge pivot, (b) a cocycle reversal,
and (c) a cycle reversal.}
\label{pivotalcycle-cocyclenoshadow}
\end{figure}

\begin{theorem}
Two partial orientations are equivalent in the generalized
cycle--cocycle reversal system if and only if their associated divisors
are linearly equivalent.
\end{theorem}

Moreover, we use edge pivots and cocycle reversals to show that a
divisor with degree at most $ g-1$ is linearly equivalent to a divisor
associated to a partial orientation or it is linearly equivalent a
divisor dominated by a divisor associated to an acyclic partial
orientation. These results allow us to reduce the study of linear
equivalence of divisors of degree at most $g-1$ on graphs to the study
of partial orientations.

Dhar's burning algorithm is one of the key tools in the study of
chip-firing. Originally discovered in the context of the abelian
sandpile model, Dhar's algorithm provides a quadratic-time test for
determining whether a given configuration is $q$-reduced. There are
variants of Dhar's algorithm which produce bijections between
$q$-reduced divisors and spanning trees, some of which respect
important tree statistics such as external activity \cite{cori2003sand} or tree inversion number \cite{perkinson2016g}. In the
work of Baker and Norine, this algorithm was implicitly employed in the
proof of their core lemma RR1, which states that if a divisor has
negative rank then it is dominated by a divisor of degree $g-1$ which
also has negative rank. We present an oriented version of Dhar's
algorithm whose iterated application provides a method for determining
whether a partial orientation is equivalent in the generalized cocycle
reversal system to an acyclic partial orientation or a sourceless
partial orientation. We combine these results to obtain the following theorem.\looseness=1

\begin{theorem}
Let $D$ be a divisor with $\mathrm{deg}(D)\leq g-1$, then

(i)
 $r(D) = -1$ if and only if $D \sim D' \leq D_{\mathcal{O}}$ with
$\mathcal{O}$ an
acyclic partial orientation.

(ii)
 $r(D) \geq0$ if and only if $D \sim D_{\mathcal{O}}$ with
$\mathcal{O}$ a
sourceless partial orientation.
\end{theorem}

This implies that for understanding whether the rank of a divisor is
negative or nonnegative, it suffices to investigate partial
orientations. We introduce $q$-connected partial orientations and
use them to prove the following explicit description of ranks of
divisors associated to partial orientations.

\begin{theorem}
The Baker--Norine rank of a divisor associated to a partial orientation
is one less than the minimum number of directed paths which need to be
reversed in the generalized cycle-cocycle reversal system to produce an
acyclic partial orientation.
\end{theorem}

We apply these results in providing a new proof of the
Riemann--Roch theorem for graphs. For this, we employ a variant of Baker
and Norine's formal reduction involving strengthened versions of RR1
and RR2. The Riemann--Roch theorem was extended to metric graphs and
tropical curves by Gathmann and Kerber \cite{gathmann2008riemann}, and
Mikhalkin and Zharkov \cite{mikhalkin465tropical}. We are currently
writing an extension of the results from this paper to the setting of
metric graphs.

Luo \cite{luo2011rank} investigated the notion of a \textit
{rank-determining set} for a metric graph $\Gamma$, a collection $A$
of points such that the rank of any divisor can be computed by removing
chips only from points in $A$. A \textit{special open set} $\mathcal
{U}$ is a
nonempty, connected, open subset of $\Gamma$ such that every connected
component $X$ of $\Gamma\setminus\mathcal{U}$ has a boundary point
$p$ with
$\mathrm{outdeg}_{X}(p)\geq2$. We apply acyclic orientations and path
reversals to provide a new proof of Luo's topological characterization
of rank-determining sets as those which intersect every special open set.

We discuss a close relationship between network flows and divisor
theory. We demonstrate that the max-flow min-cut theorem is logically
equivalent to the Euler characteristic description of orientable
divisors \cite{an2013canonical}. A polynomial-time method for
computing break divisors is provided, combining the observation
(originally due to Felsner \cite{felsner2004lattice}) that max-flow
min-cut can be used to construct orientations, with An, Baker,
Kuperberg, and Shokrieh's characterization of break divisors as the
divisors associated to $q$-connected orientations offset by a chip at
$q$. Motivated by these connections with max-flow min-cut, we prove the
following statement.

\begin{theorem}
$Pic^{g-1}$ is canonically isomorphic as a $Pic^{0}$-torsor to the set
of full orientations modulo cut and cycle reversals acted on by path reversals.
\end{theorem}

The perspective given by partial orientations is more ``matroidal'' than
the divisor theory of Baker and Norine. In future work, we plan to
extend the ideas from this paper to \textit{partial reorientations of
oriented matroids}.

\section{Notation and terminology}

\textbf{Graphs:} We take $G$ to be a finite loopless undirected connected
multigraph with vertex set $V(G)$ and edge set $E(G)$. The \textit
{degree} of $v$, written $\mathrm{deg}(v)$, is the number of edges
incident to $v$ in $G$. For $X,Y \subset V(G)$, we write $(X,Y)$ for
the set of edges with one end in $X$ and the other in $Y$. Thus
$(X,X^{c})$ is the cut defined by $X$. Given $v \in V(G)$, we write
$\mathrm{outdeg}_{X}(v)$ for the number of edges incident to $v$ leaving
the set $X$. We define the \textit{boundary} of $X$ to be the set of
vertices in $X$ such that $\mathrm{outdeg}_{X}(v)>0$. For $S,T \subset
V(G)$ we say that the \textit{distance} from $S$ to $T$, written
$d(S,T)$ is the minimum over all $s \in S$ and $t \in T$ of the length
of a walk from $s$ to $t$.

A \textit{divisor}, or a \textit{chip configuration}, is a formal sum
of the vertices with integer coefficients. Alternately, a divisor may
be considered as function $D: V(G) \rightarrow\mathbb{Z}$, i.e., an
integral vector. We denote the set of divisors on $G$ by $\mathrm{Div}(G)$. The net number of chips in a divisor $D$ is called the
\textit{degree} of $D$ and is written $\mathrm{deg}(D)$. Given two
divisors $D_{1}$ and $D_{2}$, we write $D_{1} \geq D_{2}$ if $D_{1}(v)
\geq D_{2}(v)$ for all $v \in V(G)$. If $D_{1} \geq D_{2}$ and $D_{1}
\neq D_{2}$, we may write $D_{1} \gneq D_{2}$.   Let $D_{1}$ and $D_{2} $
be the effective divisors with disjoint supports such that $D_{1} -
D_{2} = D$. We write $\mathrm{deg}^{+}(D)$ and $\mathrm{deg}^{-}(D)$ for
$\mathrm{deg}(D_{1})$ and $\mathrm{deg}(D_{2})$, respectively.

We take $\Delta$ to be the \textit{Laplacian} matrix $\Delta= D-A$,
where $D$ is a diagonal matrix with $(i,i)$th entry $\mathrm{deg}(v_{i})$,
and $A$ is the adjacency matrix with $(i,j)$th entry equal to the
number of edges between $v_{i}$ and $v_{j}$. If a vertex $v_{i}$
\textit{fires}, it sends a chip to each of its neighbors, losing its
degree number of chips in the process, and we obtain the new divisor
$D-\Delta e_{i}$, where $e_{i}$ is the $i$th standard basis vector. We
define the firing of a set of vertices to be the firing of each vertex
in that set. Given $A \subset V(G)$, we take $\chi_{A}$ to be the
incidence vector for $A$ so that $D-\Delta\chi_{A}$ is the divisor
obtained by firing the set~$A$. We say that two divisors $D$ and $D'$
are \textit{linearly equivalent}, written $D \sim D'$, if there exists
a sequence of firings bringing $D$ to $D'$, i.e., $D-D'$ is in the
$\mathbb{Z}$-span of the columns of $\Delta$. We define $\mathrm{Pic}^{d}(G)$ to be the set of divisors of degree $d$ modulo linear equivalence.

A vertex $v$ is in debt if $D(v) <0$, and $D$ is \textit{effective} if
no vertex is in debt. The \textit{rank} of a divisor is the quantity
$r(D) = \min_{E\geq0} \mathrm{deg}(E)-1$ such that there exists no $E'
\geq0$ with $D - E \sim E'$. The genus of a graph $g =
|E(G)|-|V(G)|+1$, also known as the cyclomatic number of $G$, is the
rank of the cographic matroid of $G$. The \textit{canonical divisor}
$K$ is the divisor with $i$th entry $K(v_{i})=\mathrm{deg}(v_{i})-2$. A
divisor $D$ is said to be \textit{$q$-reduced} for some $q \in V(G)$
if $(i)$ $D(v) \geq0$ for all $v \neq q$, and $(ii)$ for any set $A
\subset V(G) \setminus\{ q \} $, firing $A$ causes some vertex to be
sent into debt. We take the set of \textit{non-special divisors} to be
$\mathcal{N} = \{ \nu\in\mathrm{Div}(G) : \mathrm{deg}(\nu) = g-1, r(\nu
) = -1\}$. We also define
\begin{eqnarray*}\mathcal{N}_{k} = \{ \nu\in\mathrm{Div}(G) : \mathrm{deg}(\nu) = k, r(\nu) = -1\}
\end{eqnarray*}
so that $\mathcal{N}_{g-1} = \mathcal{N}$.

For a non-empty $S \subset V(G)$, we take $G[S]$ to be the induced
subgraph on $S$ and let $D|_{S}$ be the divisor $D$ restricted to $S$.
We define $\chi(S)$ to be the \textit{topological Euler
characteristic} of $G[S]$, i.e., $|S|-|E(G[S])|$. Given a divisor $D$
and a non-empty subset $S \subset V(G)$, we define
\begin{eqnarray*}[cc]
\chi(S,D) = \mathrm{deg}(D|_{S}) +\chi(S)
\\
\chi(G,D) = \mathrm{min}_{S\subset V(G)} \chi(S,D)
\\
{\bar{\chi}(S,D)} = |E(G[S])| + |(S, S^{c} )| - |S|
- \mathrm{deg}(D|_{S})
\\
{\bar{\chi}}(G,D) = \mathrm{min}_{S\subset V(G)} {\bar
{\chi}(S,D)}.
\end{eqnarray*}

\textbf{Orientations:}
An \textit{orientation} of an edge $e =(u,v) \in E(G)$ is a pairing
$(e,v)$. In this case we say that $e$ is
oriented away from $u$ and oriented towards $v$.  The \textit{tail} of $e$ is $u$ and the \textit{head} of $e$ is $v$.  We draw an oriented
edge, i.e., directed edge, as an arrow pointing from $u$ to~$v$.
A~\textit{partial orientation} $\mathcal{O}$ of a graph is an
orientation of a
subset of the edges, and we say that the remaining edges are \textit
{unoriented}. A~partial orientation is said to be \textit{full}, or
simply an orientation, if each edge in the graph is oriented. A \textit
{directed path} is a path such that the head of each oriented edge is
tail of its successor. For a partial orientation $\mathcal{O}$ and a
set $X
\subset V(G)$, we write ${\bar{X}}_{\mathcal{O}}$ for the set of vertices
reachable from $X$ by a directed path in $\mathcal{O}$, or simply
${\bar{X}}$
when $\mathcal{O}$ is clear from the context.

The indegree of a vertex $v$ in $\mathcal{O}$, written $\mathrm{indeg}_{\mathcal{O}}(v)$
or simply $\mathrm{indeg}(v)$, is the number of edges oriented towards $v$
in $\mathcal{O}$. We associate to each partial orientation, a divisor
$D_{\mathcal{O}}$
with $ D_{\mathcal{O}} (v) = \mathrm{indeg}_{\mathcal{O}}(v)-1$. We say
that a divisor is
\emph{partially orientable}, resp.~\emph{orientable}, if it is the
divisor associated to some partial, resp.~full, orientation. Given a
partially orientable divisor $D$ we denote by $\mathcal{O}_{D}$ any partial
orientation with associated divisor $D$.

An \textit{edge pivot} at a vertex $v$ is an operation on a partial
orientation $\mathcal{O}$ whereby an edge oriented towards $v$ is unoriented
and an unoriented edge incident to $v$ is oriented towards $v$. We say
that a cut (also called a \textit{cocycle}) is \textit{saturated} if
each edge in the cut is oriented. A cut is \textit{consistently
oriented} in $\mathcal{O}$ if the cut is saturated and each edge in
the cut is
oriented in the same direction. We may also refer to this cut as being
\textit{saturated towards $A$} if the cut is consistently oriented
towards $A$. We similarly define a consistently oriented, i.e.
directed, cycle in $\mathcal{O}$. A \textit{cut reversal},
resp.~\textit
{cycle reversal}, in $\mathcal{O}$ is performed by reversing all of
the edges
in a consistently oriented cut, resp.~cycle. The \textit{cycle},
resp.~\textit{cocycle}, resp.~\textit{cycle--cocycle reversal systems}
are the collections of full orientations of a graph modulo cycle,
resp.~cocycle, resp.~cycle and cocycle reversals. The \textit
{generalized cycle}, resp.~\textit{cocycle}, resp.~\textit
{cycle--cocycle reversal systems} are the previous systems extended to
partial orientations by the inclusion of edge pivots. If two partial
orientations $\mathcal{O}$ and $\mathcal{O}'$ are equivalent in the
generalized
cycle--cocycle reversal system, we simply say that they are \textit
{equivalent} and write $\mathcal{O}\sim\mathcal{O}'$.

We say a vertex is a \textit{source} in a partial orientation if it
has no incoming edges. We say that a partial orientation is \textit
{sourceless} if it has no sources, and \textit{acyclic} if it contains
no directed cycles. We denote the set of partial orientations
equivalent to a given partial orientation $\mathcal{O}$ by $[\mathcal
{O}]$. Given a
vertex $q$, a partial orientation is said to be \textit{$q$-connected}
if there exists a directed path from $q$ to every other vertex.

\section{Generalized cycle, cocycle, and cycle--cocycle reversal systems}

In this section we prove Lemma~\ref{generalizedcycle} and Theorem~\ref{generalizedcc}, which generalize results of Gioan \cite[Proposition
4.10 and Corollary 4.13]{gioan2007enumerating} to the setting of
partial orientations. Our {{Theorem~\ref{generalizedcc}} lays the
foundation for the rest of the paper, and states that for divisors
associated to partial orientations, we can understand linear
equivalence as a shadow of the generalized cycle--cocycle reversal
system. We believe that this result may be of independent interest to
those studying chip-firing who are not necessarily interested in
Riemann--Roch theory for graphs.

\begin{lemma}\label{generalizedcycle}
Two partial orientations $\mathcal{O}$ and $\mathcal{O}'$ are
equivalent in the
generalized cycle reversal system if and only if $D_{\mathcal{O}}=
D_{\mathcal{O}'}$.
\end{lemma}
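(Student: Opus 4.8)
The plan is to prove the two implications separately, with essentially all of the work in the converse.

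The forward implication is immediate from the invariance already noted in the text. A cycle reversal replaces, at each vertex of a directed cycle, its unique incoming cycle-edge by its unique outgoing one and vice versa, so it preserves every indegree; an edge pivot at $v$ removes one edge oriented towards $v$ and orients another towards $v$, leaving ${\rm indeg}(v)$ unchanged and not altering the indegree of any other vertex. Hence each move fixes $D_{\O}$, and any chain of moves witnessing $\O \sim \O'$ forces $D_{\O}=D_{\O'}$.

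For the converse, assume $D_{\O}=D_{\O'}$, so that ${\rm indeg}_{\O}(v)={\rm indeg}_{\O'}(v)$ for every $v$ and, summing, $\O$ and $\O'$ orient the same number of edges. Call an edge \emph{agreeing} if it is in the same state (same orientation, or unoriented) in both, and let $H$ be the set of remaining \emph{discrepancy} edges. Agreeing edges contribute equally to ${\rm indeg}_{\O}(v)$ and ${\rm indeg}_{\O'}(v)$, so the discrepancy edges do as well: the indegrees of $\O|_H$ and $\O'|_H$ coincide at every vertex. I would split $H$ into three types --- (A) oriented in opposite directions, (B) oriented in $\O$ but unoriented in $\O'$, and (C) unoriented in $\O$ but oriented in $\O'$ --- and encode the required change as a unit flow on an auxiliary digraph $\Gamma$ on $V(G)$: each type-(A) edge becomes an arc from its $\O$-head to its $\O'$-head, each type-(C) edge injects one unit at its $\O'$-head (a source), and each type-(B) edge absorbs one unit at its $\O$-head (a sink). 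The indegree identity on $H$ is exactly flow conservation in $\Gamma$, so by the standard flow-decomposition theorem $\Gamma$ decomposes, edge-disjointly, into directed cycles of type-(A) arcs together with directed paths running from a type-(C) source to a type-(B) sink.

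It then remains to realize each piece by legal moves. A type-(A) cycle corresponds to a consistently oriented directed cycle of $\O$, which I reverse with a single cycle reversal, after which those edges carry their $\O'$ orientations. A source-to-sink path is realized by pivoting sequentially along it: at the source I use the (unoriented) type-(C) edge as the empty slot for an edge pivot, and each successive pivot re-uses the edge just freed by the previous one, so the path edges acquire their $\O'$ orientations one at a time and the terminal type-(B) edge is left unoriented, matching $\O'$. Because the decomposition is edge-disjoint and realizing a piece touches only that piece's edges (together with its attached type-(B) and type-(C) edges), the pieces do not interfere: each remains in its original $\O$-state until processed, so every cycle reversal is applied to a genuinely consistently oriented cycle and every pivot to a genuinely available edge. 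Processing all pieces transforms $\O$ into $\O'$, giving $\O \sim \O'$.

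The main obstacle is precisely the feature separating the partial-orientation setting from Gioan's full-orientation statement: since $\O$ and $\O'$ may orient different edge sets, the difference is no longer a pure Eulerian subgraph decomposable into cycles, and one cannot finish with cycle reversals alone. The crux is to observe that the indegree-balance on $H$ still yields a clean flow decomposition once type-(B) and type-(C) edges are modeled as sinks and sources, and --- the part requiring the most care --- to verify that the non-cyclic pieces are exactly those realizable by pivot sequences, and that the full collection of cycle reversals and pivot paths can be executed without the moves interfering with one another.
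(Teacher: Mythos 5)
Your proof is correct, but it packages the argument differently from the paper. The paper proceeds by a two-stage local induction: first, whenever an edge is oriented towards $v$ in $\O$ but unoriented in $\O'$, it finds a compensating edge at $v$ and performs a single pivot, inducting until the two orientations orient the same edge set; only then does it observe that the remaining disagreement consists of edges oriented oppositely, which a backwards-walk argument decomposes into consistently oriented cycles to be reversed. You instead set up the entire discrepancy at once as a unit flow on an auxiliary digraph, with the edges oriented in only one of $\O$, $\O'$ acting as sinks and sources, and invoke flow decomposition to split everything into arc-disjoint cycles and source-to-sink paths; the cycles become cycle reversals and the paths become pivot chains. Your pivot chains are precisely the ``Jacob's ladder cascades'' the paper introduces immediately after this lemma, so your proof anticipates that tool, and the flow-theoretic framing resonates with the max-flow min-cut connections developed later in the paper. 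One concrete thing your version buys: the paper's first inductive step tacitly needs the compensating edge $e'$ at $v$ to be unoriented in $\O$ for the pivot to be available, whereas $e'$ could instead be oriented away from $v$ in $\O$; your decomposition handles this automatically by routing the discrepancy along a path of oppositely oriented edges before discharging it with a pivot. The cost is a slightly heavier setup (the auxiliary digraph and the bookkeeping that assigns each path its own type-(B) and type-(C) edge at its endpoints), but both proofs ultimately rest on the same indegree-balance observation.
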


\begin{proof}

Clearly, if $\mathcal{O}$ and $\mathcal{O}'$ are equivalent in the
generalized cycle
reversal system then $D_{\mathcal{O}}= D_{\mathcal{O}'}$. We now
demonstrate the converse.

Suppose there exists some vertex $v$ incident to an edge $e$ which is
oriented towards $v$ in $\mathcal{O}$ and is unoriented in $\mathcal{O}'$. Because
$D_{\mathcal{O}}= D_{\mathcal{O}'}$, there exists another edge $e'$
which is oriented
towards $v$ in $\mathcal{O}'$ such that $e'$ is not also oriented
towards $v$
in $\mathcal{O}$. We can perform an edge pivot in $\mathcal{O}$ so
that $e'$ becomes
unoriented and $e$ is now oriented towards $v$ in both $\mathcal{O}$
and $\mathcal{O}'$. By induction on the number of edges which are oriented in $\mathcal{O}$,
but unoriented in $\mathcal{O}'$, we can assume that no such edge $e$ exists.

We claim that the orientations now differ by cycle reversals. Let $e$
be some edge oriented towards $v$ in $\mathcal{O}$ and away from $v$
in $\mathcal{O}'$.
Because $D_{\mathcal{O}}= D_{\mathcal{O}'}$ there exists another edge
$e'$ which is
oriented away from $v$ in $\mathcal{O}$ and towards $v$ in $\mathcal
{O}'$. We may perform a directed walk along edges in $\mathcal{O}$ which are oriented
oppositely in $\mathcal{O}'$. By the assumption that every edge which is
oriented in $\mathcal{O}$ is also oriented in $\mathcal{O}'$, this
walk must eventually
reach a vertex which has already been visited. This gives a cycle which
is consistently oriented in $\mathcal{O}$ and $\mathcal{O}'$ with opposite
orientations. We can reverse the orientation of this cycle in $\mathcal
{O}$ and
again induct on the number of edges with different orientation in
$\mathcal{O}$
and $\mathcal{O}'$, thus proving the claim.

\end{proof}

When moving from $\mathcal{O}$ to $\mathcal{O}'$ in the proof of
Lemma~\ref{generalizedcycle}, we first perform all necessary edge pivots and then
cycle reversals. If $\mathcal{O}'$ is acyclic then we never need to
perform any
cycle reversals and we obtain the following corollary.

\begin{corollary}\label{lem3.1cor}
Let $\mathcal{O}$ and $\mathcal{O}'$ be partial orientations with
$\mathcal{O}'$ acyclic such
that $D_{\mathcal{O}} = D_{\mathcal{O}'}$, then $\mathcal{O}$ and
$\mathcal{O}'$ are related by a
sequence of edge pivots.
\end{corollary}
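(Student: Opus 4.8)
The plan is to run the proof of Lemma~\ref{generalizedcycle} verbatim and observe that, once $\O'$ is assumed acyclic, the cycle-reversal phase never fires. Recall that that proof transforms $\O$ into $\O'$ in two stages. In the first stage one performs edge pivots to remove, one at a time, every edge that is oriented in $\O$ but unoriented in $\O'$, inducting on the number of such edges. Since $D_\O = D_{\O'}$ forces $\mathrm{indeg}_\O(v) = \mathrm{indeg}_{\O'}(v)$ at every vertex, the two orientations have equally many oriented edges; so once no edge is oriented in $\O$ yet unoriented in $\O'$, the oriented-edge sets of $\O$ and $\O'$ in fact coincide. In the second stage one then corrects the edges that are oriented in both orientations but in opposite directions, using cycle reversals. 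The entire content of the corollary is that this second stage is vacuous when $\O'$ is acyclic, so that only edge pivots are used.

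First I would show that after the first stage no oppositely oriented edge can survive. Let $F$ be the set of edges whose $\O$-orientation is reversed in $\O'$. The edges oriented the same way in both contribute identically to the indegree of each vertex on the two sides, so the equality $\mathrm{indeg}_\O(v) = \mathrm{indeg}_{\O'}(v)$ forces the $F$-contributions to balance; since reversing an $F$-edge swaps head and tail, this says that in the sub-digraph $(V(G),F)$ oriented as in $\O$ every vertex has indegree equal to outdegree. If $F$ were nonempty, this balance condition lets me extend a directed walk in $\O$ along $F$-edges that never gets stuck, and since $G$ is finite the walk must revisit a vertex, closing up into a directed cycle of $\O$ contained in $F$.

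The crux is that this closed walk is simultaneously a directed cycle of $\O'$: every one of its edges is oriented in $\O'$ exactly opposite to its $\O$-orientation, so traversing the cycle in the reverse sense exhibits it as a consistently oriented cycle of $\O'$, contradicting acyclicity. Hence $F = \emptyset$, so after the first stage $\O$ and $\O'$ agree on every edge and we have $\O = \O'$ using edge pivots alone. I expect the main obstacle to be precisely this verification, namely that the indegree-matching condition propagates the reversed-orientation property around the entire closed walk so that the acyclicity of $\O'$ can be invoked; once the balance argument is in place the corollary follows immediately from the two-stage structure of the proof of Lemma~\ref{generalizedcycle}.
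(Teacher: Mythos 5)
Your proposal is correct and is essentially the paper's own argument: the paper proves this corollary by observing that in the proof of Lemma~\ref{generalizedcycle} all edge pivots are performed first and the only cycles reversed afterwards are consistently oriented (oppositely) in $\O'$, so acyclicity of $\O'$ makes the second stage vacuous. Your balance/Eulerian justification for why the oppositely-oriented edge set must then be empty is just a more explicit version of the directed-walk argument already present in that proof.
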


We now introduce a nonlocal extension of edge pivots.

\begin{definition}
Given a directed path $P$ from $u$ to $v$ in $G$ in a partial
orientation $\mathcal{O}$, and an unoriented edge $e$ incident to $v$,
we may
perform successive edge pivots along $P$ causing the initial edge of
the path to become unoriented. We call this sequence of edge pivots a
Jacob's ladder cascade (see {Fig.~\ref{jacob'sladder6}}).
\end{definition}

\begin{figure}
\includegraphics[scale=.8]{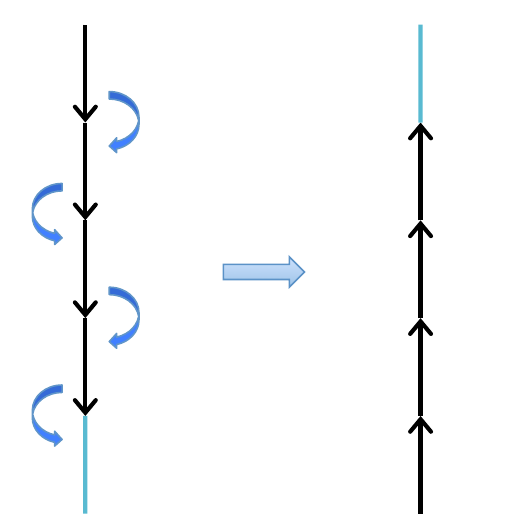}
\caption{A Jacob's ladder cascade.} \label{jacob'sladder6}
\end{figure}

For proving the main result of this section, {Theorem~\ref{generalizedcc}}, we will need the following lemma.

\begin{lemma}\label{bar}
If $\mathcal{O}$ is a partial orientation, then ${\bar{\chi
}}(D_{\mathcal{O}})\geq0$.
\end{lemma}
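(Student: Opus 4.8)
The plan is to fix a subset $S \subset V(G)$, expand ${\bar\chi}(S, D_{\O})$ using the defining formula $D_{\O}(v) = {\rm indeg}_{\O}(v) - 1$, and reduce the inequality to a transparent edge count. Since ${\bar\chi}(D_{\O})$ denotes the minimum ${\bar\chi}(G, D_{\O}) = {\rm min}_{S} {\bar\chi}(S, D_{\O})$, it suffices to prove ${\bar\chi}(S, D_{\O}) \geq 0$ for every $S$.

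First I would compute $${\rm deg}(D_{\O}|_S) = \sum_{v \in S}\big({\rm indeg}_{\O}(v) - 1\big) = \sum_{v \in S}{\rm indeg}_{\O}(v) - |S|.$$ Substituting this into the definition of ${\bar\chi}(S, D_{\O})$ and invoking the edge partition $|E(G)| = |E(G[S])| + |E(G[S^c])| + |(S, S^c)|$, the $|E(G[S^c])|$ and $|S|$ terms cancel, leaving $${\bar\chi}(S, D_{\O}) = |E(G[S])| + |(S, S^c)| - \sum_{v \in S}{\rm indeg}_{\O}(v).$$

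The key observation is that $\sum_{v \in S}{\rm indeg}_{\O}(v)$ counts exactly those oriented edges of $\O$ whose head lies in $S$, each with multiplicity one. Every such edge has at least one endpoint in $S$, so it belongs either to $G[S]$ or to the cut $(S, S^c)$; no edge of $G[S^c]$ can contribute. Hence $\sum_{v \in S}{\rm indeg}_{\O}(v) \leq |E(G[S])| + |(S, S^c)|$, and ${\bar\chi}(S, D_{\O}) \geq 0$ follows at once. In fact the difference is exact: it equals the number of unoriented edges of $G[S]$ plus the number of cut edges in $(S, S^c)$ not oriented toward $S$. Taking the minimum over all $S$ gives ${\bar\chi}(D_{\O}) \geq 0$. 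I do not expect a genuine obstacle here; the only point demanding care is the bookkeeping of the indegree sum --- confirming that each oriented edge into $S$ is counted once and that interior edges of $S^c$ are irrelevant --- which the edge partition makes routine.
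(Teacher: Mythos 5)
Your proof is correct and follows the same idea as the paper: the paper's proof is a one-sentence informal version of exactly this edge count, observing that $\deg(D_{\O}|_S)$ cannot exceed the number of chips contributable by edges of $G[S]$ and of the cut $(S,S^c)$. You have simply written out the bookkeeping (the cancellation of $|S|$ and $|E(G[S^c])|$ and the identification of $\sum_{v\in S}{\rm indeg}_{\O}(v)$ with oriented edges having head in $S$) explicitly, which also recovers the paper's follow-up remark characterizing when equality holds.
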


\begin{proof}
By definition, ${\bar{\chi}}(D_{\mathcal{O}})\geq0$ says that for
every $S
\subset V(G)$, the value ${\bar{\chi}}(S ,D_{\mathcal{O}})\geq0$. This
inequality states that the divisor $D_{\mathcal{O}}$ restricted to the
set $S$
has at most as many chips as can be contributed by oriented edges in the induced subgraph $G[S]$ and the edges in the cut $(S,S^{c})$.
\end{proof}

We note that the induced graph $G[S]$ is fully oriented and all of the
edges in $(S,S^{c})$ are oriented towards $S$ precisely when ${\bar
{\chi}}(S ,D_{\mathcal{O}}) = 0$. Bartels, Mount, and Welsh \cite[Proposition 1]{bartels1997win} proved that the converse of {Lemma~\ref{bar}} also
holds; if a divisor $D$ satisfies ${\bar{\chi}}(D)\geq0$, then $D$
is partially orientable.

\begin{theorem} \label{generalizedcc}
Two partial orientations $\mathcal{O}$ and $\mathcal{O}'$ are
equivalent in the
generalized cycle--cocycle reversal system if and only if $D_{\mathcal
{O}}$ is
linearly equivalent to $D_{\mathcal{O}'}$.
\end{theorem}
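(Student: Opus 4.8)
The plan is to prove the two implications separately, the forward one being routine and the converse carrying all of the content.

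For the forward direction I would check that each of the three generating moves respects the linear equivalence class of the associated divisor. Edge pivots leave $D_{\O}$ unchanged (as noted in the introduction), and a cycle reversal also fixes $D_{\O}$: on a directed cycle each vertex has exactly one incoming and one outgoing cycle edge, so reversing the cycle removes one incoming edge and restores one, leaving every indegree unchanged. A cocycle reversal of a cut $(A,A^c)$ saturated towards $A$ converts, for each cut edge, a contribution to the indegree of its endpoint in $A$ into a contribution to the indegree of its endpoint in $A^c$; comparing this with the Laplacian shows it is exactly the firing of the set $A$, i.e.\ it sends $D_{\O}$ to $D_{\O}-\Delta\chi_A$. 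Hence every generating move preserves the linear equivalence class, and $\O\sim\O'$ forces $D_{\O}\sim D_{\O'}$.

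For the converse I would first invoke Lemma \ref{generalizedcycle} to reduce the problem to divisors: since partial orientations with equal divisors are already equivalent, I may freely replace $\O$ and $\O'$ by any partial orientations carrying the same divisors, so only the linear equivalence class of the divisor matters. It then suffices to realize the firing of a single set, namely to show that if $D_{\O}-\Delta\chi_A$ is again partially orientable then some partial orientation equivalent to $\O$ has this divisor. The strategy is to produce, by divisor-preserving pivots and cycle reversals, an equivalent partial orientation $\O_1$ with $D_{\O_1}=D_{\O}$ in which the cut $(A,A^c)$ is \emph{saturated towards} $A$, and then to perform the cocycle reversal of that cut; by the computation above this yields the divisor $D_{\O}-\Delta\chi_A$, after which a final application of Lemma \ref{generalizedcycle} matches it to $\O'$.

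The heart of the matter, and the step I expect to be the main obstacle, is the construction of $\O_1$: orienting every edge of $(A,A^c)$ towards $A$ without altering the divisor. Cut edges oriented the wrong way, and more seriously unoriented cut edges, must be disposed of, and this is where Jacob's ladder cascades are essential: an unoriented cut edge at a vertex $v\in A$ reachable by a directed path inside $G[A]$ can be oriented into $A$ at the expense of freeing an edge deeper in $A$, so the ``unorientedness'' is transported off the cut rather than destroyed, with any directed cycles created along the way removed by cycle reversals. To guarantee that this relocation always succeeds—that there is enough indegree capacity on the boundary of $A$ to absorb the cut edges once they point inward—I would appeal to Lemma \ref{bar}: the inequality $\bar\chi(S,D_{\O})\ge 0$ is precisely the flow/Hall condition certifying that the required indegrees on $A$ are realizable with the cut forced inward, and the equality case $\bar\chi(S,D_{\O})=0$ identifies exactly the subsets $S$ whose incident cut is forced to be saturated towards $S$. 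The remaining delicacy is to sequence these single-set firings so that each intermediate divisor stays partially orientable while passing from $D_{\O}$ to $D_{\O'}$; the feasibility guaranteed by Lemma \ref{bar}, together with the freedom provided by Lemma \ref{generalizedcycle}, is what I would use to arrange such a chain of saturated-cut reversals interleaved with pivots and cycle reversals.
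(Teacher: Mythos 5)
Your forward direction is fine and matches the paper's (which simply declares it clear), and your converse has the right skeleton: reduce via Lemma \ref{generalizedcycle} to realizing set-firings, realize a firing by first saturating the corresponding cut with edge pivots and Jacob's ladder cascades and then reversing it, and use $\bar\chi \geq 0$ as the feasibility certificate. This is indeed the paper's strategy. But the two points you defer are exactly where the content lies, and as written the plan does not close. The first gap is the sequencing problem. Writing $D_{\O'} = D_{\O} - \Delta f$ with $f \geq 0$, it is not true in general that firing $\mathrm{supp}(f)$ in one step, or firing the top level set of $f$, keeps the divisor partially orientable; so ``arrange a chain of single-set firings with partially orientable intermediates'' is an assertion of essentially the whole difficulty. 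The paper resolves this by locating a set $C$ squeezed between the top level set $B = \{v : f(v) = b\}$ and $A = \mathrm{supp}(f)$ --- concretely $C = \bar{B}$, the set reachable from $B$ after the cascades --- whose cut $(C, C^c)$ is saturated towards $C$; then $f - \chi_C \geq 0$ and induction on $\deg(f)$ applies. Without specifying which set to fire at each stage, your induction has no base to stand on.

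The second gap is that your certificate is applied to the wrong divisor. The inequality $\bar\chi(S, D_{\O}) \geq 0$ holds for free because $\O$ is a partial orientation --- that is all Lemma \ref{bar} says --- and it certifies nothing about whether $(A, A^c)$ can be forced inward while preserving $D_{\O}$. The contradiction in the paper comes from the \emph{target}: if the cascades cannot clear the outward-pointing edges from $(A, A^c)$, one exhibits a set (namely $\bar{A} \setminus A$) on which $\bar\chi(\cdot, D_{\O'}) < 0$, contradicting Lemma \ref{bar} applied to $\O'$. So the feasibility condition you need to invoke is $\bar\chi(\cdot, D_{\O'}) \geq 0$, i.e.\ partial orientability of the divisor you are moving \emph{to}, not of the one you are moving from. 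With the set $C$ chosen as above and the certificate pointed at $D_{\O'}$, your outline becomes the paper's proof.
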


\begin{proof}

Clearly, if $\mathcal{O}$ and $\mathcal{O}'$ are equivalent in the generalized
cycle--cocycle reversal system then $D_{\mathcal{O}} \sim D_{\mathcal
{O}'}$. We now
demonstrate the converse.

By {Lemma~\ref{generalizedcycle}}, it suffices to show that there exists
$\mathcal{O}'' \sim\mathcal{O}$ in the generalized cocycle reversal
system such that
$D_{\mathcal{O}''} = D_{\mathcal{O}'}$. Because $D_{\mathcal{O}'}$
and $D_{\mathcal{O}}$ are
chip-firing equivalent, we can write $D_{\mathcal{O}'} = D_{\mathcal
{O}} - \Delta f$
for $f$ some integral vector. The kernel of the Laplacian of an
undirected connected graph is generated by the all 1's vector, thus by
adding the appropriate multiple of the all 1's vector to $f$, we may
assume that $f\geq0$ and there exists some $v \in V(G)$ such that
$f(v) =0$. Let $a$ and $b$ be the minimum and maximum positive values
of $f$ respectively. We take $A = \{ v \in V(G) : f(v) \geq a\} = \mathrm{supp}(f)$ and $ B = \{ v \in V(G) : f(v) = b\}$.

Suppose there exists some set of vertices $C$ with $B \subset C \subset
A$ whose associated cut $(C, C^{c})$ is fully oriented towards $C$. We
can reverse the cut $(C,C^{c})$ to get a new partial orientation
${\tilde{\mathcal{O}}}$ such that $D_{\tilde{\mathcal{O}}}=
D_{\mathcal{O}} - \Delta(f -
\chi_{C})$. The theorem then follows by induction on $\mathrm{deg}(f)$,
hence it suffices to show that we can perform edge pivots to obtain such
a~set~$C$.

We first claim that we may perform edge pivots so that the cut
$(A,A^{c})$ does not contain any edges oriented away from $A$. The
proof of the claim is by induction on the number of edges in
$(A,A^{c})$ oriented away from $A$. Let $e$ be an edge in $(A,A^{c})$
which is oriented towards $A^{c}$. Suppose there exists $p$, a directed
path which begins with $e$ and terminates at some vertex $v$ in $A^{c}$
which is incident to an unoriented edge contained in $G[A^{c}]$. Take
$e'$ to be the last edge in $p$ which belongs to $(A,A^{c})$. Note that
$e'$ is necessarily oriented towards $A^{c}$. We may perform a Jacob's
ladder cascade along the portion of $p$ starting at $e'$, thus causing
$e'$ to become unoriented. By our choice of $e'$, the cascade does not
involve any edges in $(A,A^{c})$ other than $e'$, hence it causes the
number of edges in $(A,A^{c})$ which are oriented towards $A^{c}$ to
decrease by 1, and we may apply induction. Therefore we assume that no
such path $p$ exists, i.e., there is no unoriented edge in $G[A^{c}]$
which is reachable from $A$ by a directed path. Letting $S = \bar
{A}\setminus A$, it follows that every edge in $E(G[S])$ is fully
oriented and $(S, S^{c} \cap A^{c})$ is fully oriented towards $S$. By
assumption $(A,S)$ contains at least one edge oriented towards $S$,
thus we conclude
\begin{eqnarray*}{\bar{\chi}}(S ,D_{\mathcal{O}}) = |E(G[S])| + |(S, S^{c}
\cap A^{c})|+|(A,S)| - |S| - \mathrm{deg}(D_{\mathcal{O}}|_{S})< |(A,S)|.
\end{eqnarray*}
We know that $\mathrm{deg}(D_{\mathcal{O}'}|_{S}) \geq\mathrm{deg}(D_{\mathcal{O}}|_{S}) +
|(A,S)|$ because every vertex in $A$ fires at least once and no vertex
in $S$ fires, therefore ${\bar{\chi}}(S ,D_{\mathcal{O}'})<0$, but
by {Lemma~\ref{bar}} this contradicts the assumption that $D_{O'}$ is a partially
orientable divisor, and proves the claim.

We now assume that none of the edges in $(A,A^{c})$ are oriented
towards $A^{c}$. For each $u \in B$ and $v \notin B$, we have that
$f(u)>f(v)$. This implies that $D_{\mathcal{O}'}(u) \leq D_{\mathcal
{O}}(u)- \mathrm{outdeg}_{B}(u)$, which says that there are at least $\mathrm{outdeg}_{B}(u)$ edges oriented towards $u$. We can perform edge pivots
at vertices on the boundary of $B$ bringing directed edges into the cut
$(B,B^{c})$ which are oriented towards $B$, therefore we assume that no
edge in $(B,B^{c})$ is unoriented. If $(B,B^{c})$ is fully oriented
towards $B$ then we may take $B=C$, so we assume that there exists some
edge $e$ in $(B,B^{c})$ which is oriented towards $B^{c}$. Suppose
there exists $p$, a directed path which begins with $e$ and terminates
at some vertex $v$ in $B^{c}$ which is incident to an unoriented edge
contained in $G[B^{c}]$. By assumption, $(A,A^{c})$ contains no edges
oriented towards $A^{c}$, thus $p$ is contained in $A$. Take $e'$ to be
the last edge in $p$ which belongs to $(B,B^{c})$. Note that $e'$ is
necessarily oriented towards $B^{c}$. We may perform a Jacob's ladder
cascade along the portion of $p$ starting at $e'$ thus causing $e'$ to
become unoriented. By our choice of $e'$, the cascade does not involve
any edges in $(B,B^{c})$ other than $e'$, hence it causes the number of
edges in $(B,B^{c})$ oriented towards $B^{c}$ to decrease by 1 and it
preserves the property that $(A,A^{c})$ contains no edges oriented
towards $A^{c}$. By induction on the number of edges in $(B,B^{c})$
oriented towards $B^{c}$, we assume that no such path $p$ exists. It
follows that $G[{\bar{B}} \setminus B]$ is fully oriented. Moreover,
$(\bar{B},\bar{B}^{c})$ is fully oriented towards ${\bar{B}}$, and
$B \subset\bar{B} \subset A$, hence we may take $C = \bar{B}$, thus
completing the proof.
\end{proof}

\begin{corollary}\label{cocycle}
Let $\mathcal{O}$ and $\mathcal{O}'$ be partial orientations with
$\mathcal{O}'$ acyclic. Then
$\mathcal{O}$ and $\mathcal{O}'$ are equivalent in the generalized
cycle--cocycle
reversal system if and only if they are equivalent in the generalized
cocycle reversal system.
\end{corollary}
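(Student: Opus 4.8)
The plan is to observe that one implication is immediate and that the reverse implication is already latent in the proof of Theorem \ref{generalizedcc}, once we replace its concluding appeal to Lemma \ref{generalizedcycle} by Corollary \ref{lem3.1cor}. First I would dispose of the easy direction: every edge pivot and every cocycle reversal is in particular a move in the generalized cycle-cocycle reversal system, so any witness to equivalence in the generalized cocycle reversal system is a fortiori a witness to equivalence in the generalized cycle-cocycle reversal system. Note that acyclicity of $\O'$ plays no role here.

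For the converse, suppose $\O$ and $\O'$ are equivalent in the generalized cycle-cocycle reversal system. By Theorem \ref{generalizedcc} this is equivalent to $D_{\O} \sim D_{\O'}$, so I would re-examine what the proof of that theorem actually produces. The crucial point is that, starting from $\O$, that argument reaches an intermediate orientation $\O''$ with $D_{\O''} = D_{\O'}$ using \emph{only} edge pivots (in the guise of Jacob's ladder cascades) and cocycle reversals; cycle reversals are never invoked to construct $\O''$. In other words, $\O$ and $\O''$ are already equivalent in the generalized cocycle reversal system, and the only place cycle reversals enter the proof of Theorem \ref{generalizedcc} is in the final step, where Lemma \ref{generalizedcycle} is used to relate $\O''$ to $\O'$.

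It then remains to pass from $\O''$ to $\O'$ without cycle reversals. Here I would use the hypothesis that $\O'$ is acyclic: since $D_{\O''} = D_{\O'}$ and $\O'$ is acyclic, Corollary \ref{lem3.1cor} shows that $\O''$ and $\O'$ are related by a sequence of edge pivots alone. Concatenating the two stages yields a sequence consisting solely of edge pivots and cocycle reversals taking $\O$ to $\O'$, so $\O$ and $\O'$ are equivalent in the generalized cocycle reversal system, which completes the proof.

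The main obstacle, and really the only subtlety, is to verify cleanly that the construction in Theorem \ref{generalizedcc} uses no cycle reversal prior to reaching $\O''$; this is exactly why that proof was phrased as producing $\O'' \sim \O$ in the generalized cocycle reversal system with $D_{\O''} = D_{\O'}$. Granting this, the acyclicity of $\O'$ is precisely the hypothesis that lets Corollary \ref{lem3.1cor} stand in for Lemma \ref{generalizedcycle}, thereby eliminating the need for cycle reversals in the final step.
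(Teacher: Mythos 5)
Your proposal is correct and follows essentially the same route as the paper: the easy inclusion of moves, then extracting from the proof of Theorem \ref{generalizedcc} an intermediate $\O''$ reached from $\O$ by edge pivots and cocycle reversals alone with $D_{\O''}=D_{\O'}$, and finally invoking Corollary \ref{lem3.1cor} (rather than Lemma \ref{generalizedcycle}) to connect $\O''$ to the acyclic $\O'$ by edge pivots only. Your explicit remark that the subtlety lies in checking that no cycle reversal occurs before reaching $\O''$ is exactly the point the paper's phrasing of Theorem \ref{generalizedcc} is designed to address.
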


\begin{proof}
It is clear that if $\mathcal{O}$ and $\mathcal{O}'$ are equivalent
in the generalized
cocycle reversal system then they are equivalent in the generalized
cycle--cocycle reversal system. For the converse, suppose that $\mathcal
{O}$ and
$\mathcal{O}'$ are equivalent in the generalized cycle--cocycle
reversal system.
By the proof of {Theorem~\ref{generalizedcc}}, $\mathcal{O}$ is
equivalent in
the generalized cocycle reversal system to some partial orientation
$\mathcal{O}
''$ such that $D_{\mathcal{O}''} = D_{\mathcal{O}'}$. Then by
{Corollary~\ref{lem3.1cor}},
$\mathcal{O}''$ is equivalent to $\mathcal{O}'$ in the
generalized cycle
reversal system using only edge pivots as $\mathcal{O}'$ is acyclic.
\end{proof}

In the following section, we will be interested in the question of when
a partially orientable divisor $D_{\mathcal{O}}$ is linearly
equivalent to a
partially orientable divisor $D_{\mathcal{O}'}$ where $\mathcal{O}'$
is acyclic. By
{Corollary~\ref{cocycle}}, it is sufficient to restrict our attention to
the generalized cocycle reversal system.

\section{Oriented Dhar's algorithm} \label{Dhar}

In this section we continue the development of divisor theory for
graphs in the language of the generalized cocycle reversal system
for partial orientations. The main result of this section is {Theorem~\ref{eff1}}, which says that for a divisor $D$ of degree at most $g-1$,
we can understand negativity versus nonnegativity of the rank of $D$
via a dichotomy between acyclic and sourceless partial orientations.
{Theorem~\ref{eff1}} is the culmination of several other results in the
section, primarily {Algorithm~\ref{construct}} which allows one to
determine whether $D$ is linearly equivalent to the divisor associated
to a partial orientation, and {Algorithm~\ref{unfurl}} which lifts the
iterated Dhar's algorithm to the generalized cocycle reversal system.
We begin by reviewing Dhar's algorithm for determining whether a
divisor is $q$-reduced, which is arguably the most important tool for
studying chip-firing.

Each divisor $D$ is equivalent to a unique $q$-reduced divisor $D_{q}$.
This allows one to determine whether $D$ is linearly equivalent to a
effective divisor: simply check whether $D_{q}(q) \geq0$. If $D_{q}(q)
\geq0$, then clearly $D$ is linearly equivalent to an effective
divisor, namely $D_{q}$. On the other hand, if $D_{q}(q) < 0$ then we
can be sure that $D$ is not linearly equivalent to an effective.
Suppose to the contrary that $D$ is equivalent to some $E \geq\vec
{0}$. We can pass from $E$ to $D_{q}$ by repeatedly firing sets of
vertices contained in $V(G) \setminus\{q\}$ without sending any such
vertex into debt, but then we will have that $D_{q}(q) \geq E(q)$, a
contradiction. Here we are implicitly using the fact that $D_{q}$ is
unique as this ensures that when we pass from $E$ to some $q$-reduced
divisor, we obtain $D_{q}$. We refer the reader to Baker and Norine's
original article for further details.

Suppose that $D$ is a divisor such that $D(v) \geq0$ for all $v \neq
q$. \textit{A priori} we would need to check the firing of every
subset of $V(G) \setminus\{ q \}$ to determine whether $D$ is
$q$-reduced, but Dhar's algorithm \cite{dhar1990self} guarantees that
we only need to check a maximal chain of sets. In Dhar's algorithm, we
begin with the set $S = V(G) \setminus\{q\}$. At each step, we
check whether the firing of $S$ would cause some vertex $v$ to be sent
into debt. If so, we remove some such $v$ from $S$ and repeat. Dhar
showed that the algorithm terminates at the empty set if and only if
$D$ is $q$-reduced (recurrent in his setting). This algorithm was
applied by Baker and Norine for proving their fundamental lemma RR1.

This algorithm is often referred to as Dhar's burning algorithm due to
the following interpretation: Suppose we place $D(v)$ firefighters at
each vertex and we start a fire at $q$ which spreads along the edges.
Each firefighter can only stop the fire from coming along one edge at a
time, hence the fire burns through a vertex when it approaches from
more than $D(v)$ directions. Dhar's result can be rephrased as saying
that the fire burns through the graph if and only if $D$ is
$q$-reduced. It is well-known that this process can be applied to
produce bijections between the spanning trees of $G$ and its
$q$-reduced divisors by taking the last edge when burning through a
vertex. Additionally, Dhar's algorithm can be used to produce a
bijection between maximal noneffective $q$-reduced divisors and
$q$-connected acyclic full orientations. We now extend the latter
relationship to (not necessarily maximal) $q$-reduced divisors and
$q$-connected acyclic partial orientations.

\begin{theorem}\label{q-conacyclic}
A divisor $D$ with $D(q)=-1$ is $q$-reduced if and only if $D =
D_{\mathcal{O}
}$, with $\mathcal{O}$ a $q$-connected acyclic partial orientation.
\end{theorem}
\begin{proof}

Let $\mathcal{O}$ be a $q$-connected acyclic partial orientation. We first
claim that $q$ is a source in $\mathcal{O}$. Suppose that this is not
so, and
let $e = (v,q)$ be an edge oriented towards $q$. By assumption, $v$ is
reachable from $q$ by a directed path, and extending this path using
$e$, we see that $q$ belongs to a directed cycle, but this contradicts
the assumption that $\mathcal{O}$ is acyclic. Because $q$ is a source, $D_{q}
= -1$. We next prove that $D_{\mathcal{O}}$ is a $q$-reduced divisor. Let
$v_{0}, \dots, v_{n}$ with $v_{0} = q$ be a total order of the
vertices corresponding to some linear extension of $\mathcal{O}$
viewed as a
poset, i.e. an order of the vertices such that if $v_{j}$ is reachable from $v_{i}$ in
$\mathcal{O}$, then $i < j$. We claim that this sequence of vertices
is an allowable burning sequence from Dhar's algorithm. Suppose this is
not so, and let $v_{i}$ be the first vertex in this sequence such that
if $\{ v_{i} , \dots, v_{n}\} = A$ fires, then $v_{i}$ does not go
into debt. By construction, we know that there does not exists any
oriented edge $(v_{i}, v_{j})$ with $j<i$ nor $(v_{j}, v_{i})$ with
$j>i$. Therefore we conclude that $D_{\mathcal{O}}(v_{i}) \leq\mathrm{outdeg}_{A}(v_{i})-1$, but this contradicts the assumption that firing
the set $A$ does not cause $v_{i}$ to go into debt.

We now prove that if $D$ is a $q$-reduced divisor then $D= D_{\mathcal
{O}}$ for
some $q$-connected acyclic partial orientation. Run the classical
Dhar's algorithm on $D$ to obtain a total order $q < v_{1} < \dots<
v_{n}$ on the vertices. Let $A_{k}$ be the set of vertices which are
less than $v_{k}$. By construction, $|(A_{k}, v_{k})| > D(v_{k}) -1$,
thus we can orient $D(v_{k}) $ of these edges towards $v_{k}$ to obtain
an orientation $\mathcal{O}$ such that $D_{\mathcal{O}} = D$. This
orientation is
acyclic since it has no edges from $v_{j}$ to $v_{i}$ for $v_{i} <
v_{j}$. Suppose that $\mathcal{O}$ is not $q$-connected so that $\bar
{q} \neq
V(G)$, and take $v_{k}$ to be the least vertex in ${\bar{q} }^{c}$. We
know that $D(v_{k}) \geq0$, thus $v_{k}$ must have at least one edge
oriented towards it from $\bar{q}$, a contradiction.
\end{proof}

The proof we have just offered of {Theorem~\ref{q-conacyclic}} is
essentially the same as the classical proofs which show that maximal
non-effective $q$-reduced divisors are in bijection with $q$-connected
acyclic full orientations, e.g. see \cite{benson2010g}. We note that acyclic full orientations are in
bijection with their associated divisors (this can be seen as a special
case of {Corollary~\ref{lem3.1cor}}), but this is clearly not true in general for
acyclic partial orientations.  A very nice investigation of
$q$-connected acyclic partial orientations was conducted by Gessel and
Sagan \cite{gessel1996tutte} who showed that the poset of such partial
orientations admits a natural interval decomposition.

As we will see in this section, for the purposes of divisor theory, it
is interesting to know whether a divisor is associated to an acyclic
partial orientation, but one cares less whether this acyclic partial
orientation is $q$-connected. This allows us to move away from
$q$-reduced divisors and to give the first ``reduced divisor free
proof'' of the Riemann--Roch formula for graphs in section~\ref{dpr&RR}.
We now describe an oriented version of Dhar's algorithm, which allows
us to determine whether a partial orientation is equivalent via edge
pivots to an acyclic partial orientation.

\begin{algorithm} \label{orientedDhar}
Oriented Dhar's algorithm
\end{algorithm}

\noindent{\textbf{Input:}
A partial orientation $\mathcal{O}$ containing a directed cycle and a source.}

\noindent{\textbf{Output:}
A partial orientation $\mathcal{O}'$ with $D_{\mathcal{O}'} =
D_{\mathcal{O}}$, which is either
acyclic or certifies that for every partial orientation $\mathcal
{O}''$ with
$D_{\mathcal{O}''} = D_{\mathcal{O}}$, $\mathcal{O}''$ contains a
directed cycle.}

\bigskip

Initialize by taking $X $ to be the set of sources in $\mathcal{O}$.
At the
beginning of each step, look at the cut $(X,X^{c})$ and perform any
available edge pivots at vertices on the boundary of $X^{c}$ which
bring oriented edges into the cut directed towards $X^{c}$. Afterwards,
for each $v$ on the boundary of $X^{c}$ with no incoming edge contained
in $G[X^{c}]$, add $v$ to $X$. If no such vertex exists, output
$\mathcal{O}'$.

\bigskip
\textbf{Correctness:}
At each step, there are no edges in $(X,X^{c})$ oriented towards $X$.
To prove this, first observe that $X$ satisfies this condition at the
beginning of the algorithm, and note that the vertices added to $X$ at
each step do not cause any such edge to be introduced because any
vertex added does not have an incoming edge in $G[X^{c}]$. It follows
that $X$ will never contain a vertex from a directed cycle: when a
vertex $v$ from a cycle is in the boundary of $X^{c}$, either the cycle
is broken by an edge pivot or $v$ stays in $X^{c}$. Moreover, the
algorithm will never construct directed cycles: if an edge pivot were
to create a cycle, this cycle would intersect $(X,X^{c})$ contradicting
our previous observation. Thus, if the algorithm terminates at $X =
V(G)$, we obtain $\mathcal{O}'$ which is acyclic. If the algorithm terminates
with $X \neq V(G)$, then $\mathcal{O}'$ has a cut saturated towards
$X^{c}$ and
$G[X^{c}]$ is sourceless. By {Corollary~\ref{lem3.1cor}} it suffices to
restrict our attention to partial orientations which are equivalent to
$\mathcal{O}$ by edge pivots. Any other partial orientation $\mathcal
{O}''$ obtained
from $\mathcal{O}'$ by edge pivots will still have $G[X^{c}]$
sourceless, and
thus contain a directed cycle: if we perform a directed walk backwards
along oriented edges in $G[X^{c}]$, this walk will eventually cycle
back on itself.

\bigskip

A vertex $v$ is added to $X$ precisely when firing $X$ causes $v$ to go
into debt. By {Theorem~\ref{q-conacyclic}}, when the input is a partial
orientation with a unique source, {Algorithm~\ref{orientedDhar}} agrees
with Dhar's algorithm at the level of divisors.

If the classical Dhar's algorithm terminates early, we obtain a set
which can be fired without causing any vertex to be sent into debt thus
bringing the divisor closer to being reduced. This leads to what is
known as the iterated Dhar's algorithm. We now extend this method to
the generalized cocycle reversal system. We call our algorithm the
``Unfurling algorithm'' because it consists of breaking cycles in a
partial orientation and spreading edges out towards sinks.

\begin{algorithm} \label{unfurl}
Unfurling algorithm
\end{algorithm}

\noindent{\textbf{Input:}
A partial orientation $\mathcal{O}$ containing a directed cycle and a source.}

\noindent{\textbf{Output:}
A partial orientation $\mathcal{O}'$ equivalent to $\mathcal{O}$ in
the generalized
cocycle reversal system which is either acyclic or sourceless.}

\bigskip

At the $k$th step, run the oriented Dhar's algorithm. If $X = V(G)$,
stop and output $\mathcal{O}$. Otherwise, reverse the consistently
oriented cut
given by the oriented Dhar's algorithm and reset $X$ as the set of
sources (see {Fig.~\ref{2ndunfurlingexample-3}}).

\begin{figure}\label{2ndunfurlingexample-3}
\includegraphics[scale=.6]{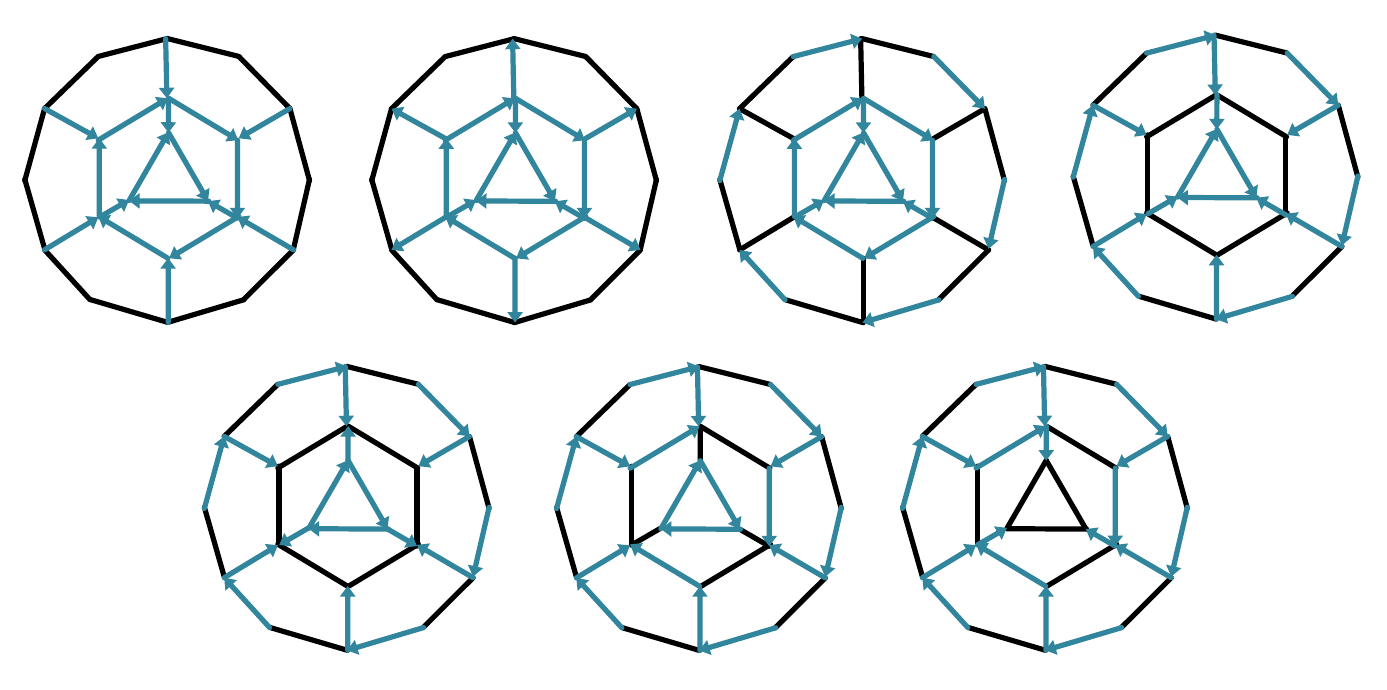}
\caption{The unfurling algorithm applied to the partial orientation on the top
left, terminating with the acyclic partial orientation on the bottom right.} \label{dharexample8}
\end{figure}

\bigskip
\textbf{Correctness:}
The collection of partially orientable divisors linearly equivalent to
$D_{\mathcal{O}}$ is finite, hence the collection of firings which
defines them
is as well, modulo the all 1's vector which generates the kernel of the
Laplacian $\Delta$. In particular, this implies that there exists a
positive integer $c$ such that for all $\mathcal{O}'$ which are
equivalent to
$\mathcal{O}$ in the generalized cycle--cocycle reversal system, for
all $f$
such that $D_{\mathcal{O}'} = D_{\mathcal{O}} - \Delta f$, and for
all $u,v \in V(G)$,
we have that $|f(u)-f(v)| \leq c$.

Let $\mathcal{O}_{k+1}$ be the orientation obtained after the $k$-th
step of
the unfurling algorithm, that is, after the reversal of the $k$-th cut
$(X_{k},X_{k}^{c})$, and let $f_{k+1} = f_{k} + \chi_{X_{k}^{c}}$ so
that $D_{\mathcal{O}_{k}} = D_{\mathcal{O}} - \Delta f_{k}$. Suppose
that the algorithm
failed to terminate. We will demonstrate the existence of vertices $a$
and $b$ such that for every positive integer $c$, there exists $k$ such
that $f_{k}(b)-f_{k}(a)>c$, contradicting our previous observation.

Let $A_{k}$ be the set of sources in $\mathcal{O}_{k}$ and $B_{k}$ be
the set
of vertices belonging to the directed cycles in $\mathcal{O}_{k}$. We claim
that for all $k$, the sets $A_{k+1} \subset A_{k}$ and $B_{k+1} \subset
B_{k}$. The oriented Dhar's algorithm does not create cycles or
sources: The former was proven in the correctness of the oriented
Dhar's algorithm and the latter follows trivially as the oriented
Dhar's algorithm does not change the number of edges oriented towards a
vertex. Additionally, the reversal of the cut $(X_{k},X_{k}^{c})$ does
not create sources or cycles: sources are not created because each
vertex on the boundary of $X_{k}^{c}$ in $\mathcal{O}_{k}$ has at
least one
incoming edge contained in $G[X_{k}^{c}]$, and it is clear that cut
reversals never create cycles. This verifies the claim. For any $a \in
A_{k}$ and $b \in B_{k}$, the value $f_{k}(b)=f_{k-1}(b)+1 = k$ while
$f_{k}(a)=f_{k-1}(a) = 0$. By the assumption that the algorithm does
not terminate, there must exist vertices $a'$ and $b'$ which belong to
$A_{k}$ and $B_{k}$, respectively, for infinitely many $k$, but
$f_{k}(b')-f_{k}(a') = k$ diverges with $k$, a contradiction.

\bigskip

Baker and Norine described the following game of solitaire
\cite[Section 1.5 ]{baker2007riemann}. Suppose you are given a configuration
of chips, can you perform chip-firing moves to bring every vertex out
of debt? There is a natural version of this game for partial
orientations: given a partial orientation, can you find an equivalent
partial orientation which is sourceless? Interestingly, there exists a
dual game in this setting, which does not make much sense in the
context of chip-firing: given a partial orientation, can you find an
equivalent partial orientation which is acyclic? Our unfurling
algorithm gives a winning strategy for at least one of the two games.
We now demonstrate that winning strategies in these games are mutually
exclusive.

\begin{theorem}\label{eff2}
A sourceless partial orientation $\mathcal{O}$ and an acyclic partial
orientation $\mathcal{O}'$ cannot be equivalent in the generalized cocycle
reversal system.
\end{theorem}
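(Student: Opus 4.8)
The plan is to reduce the statement to a fact about divisors and then isolate the real content as a short extremal computation. Suppose for contradiction that a sourceless $\O$ and an acyclic $\O'$ are equivalent in the generalized cocycle reversal system. Since every cocycle move (edge pivot or cut reversal) is in particular a cycle-cocycle move, Theorem \ref{generalizedcc} gives $D_{\O} \sim D_{\O'}$. Because a partial orientation is sourceless exactly when its divisor is effective, we have $D_{\O} \geq \vec 0$, so $D_{\O'}$ is linearly equivalent to an effective divisor. Thus the theorem follows from the complementary claim that I would state and prove as a lemma: \emph{the divisor $D_{\O'}$ of an acyclic partial orientation is never linearly equivalent to an effective divisor.} This "effective versus non-effective" split is, I expect, exactly the pair of complementary arguments the dichotomy is built from.

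To prove the lemma, I would argue by contradiction: assume $D_{\O'} \sim E$ with $E \geq \vec 0$, and write $E = D_{\O'} - \Delta g$ for an integral vector $g$. Exactly as in the proof of Theorem \ref{generalizedcc}, since the kernel of $\Delta$ is spanned by the all-ones vector I may add a suitable multiple of it to normalize $g \geq \vec 0$ with $\min_v g(v) = 0$. If $g \equiv \vec 0$ then $E = D_{\O'}$; but an acyclic partial orientation always has a source $s$ (following incoming oriented edges backward from any vertex must terminate, else one produces a directed cycle), and there $E(s) = D_{\O'}(s) = -1 < 0$, a contradiction. Otherwise let $b = \max_v g(v) \geq 1$ and $B = \{v : g(v) = b\}$, a nonempty proper subset. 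Since $\O'$ is acyclic, the oriented edges inside $G[B]$ form an acyclic subdigraph, so $B$ contains a vertex $s$ with no $\O'$-oriented edge entering it from within $B$; hence every edge oriented toward $s$ in $\O'$ lies in the cut $(B,B^c)$, giving ${\rm indeg}_{\O'}(s) \leq {\rm outdeg}_B(s)$ and therefore $D_{\O'}(s) \leq {\rm outdeg}_B(s) - 1$. On the other hand, because $s$ realizes the maximum of $g$, each neighbor in $B$ contributes $0$ and each edge to $B^c$ contributes at least $1$ to $(\Delta g)(s) = \sum_{w \sim s}(g(s)-g(w))$, so $(\Delta g)(s) \geq {\rm outdeg}_B(s)$. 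Combining, $E(s) = D_{\O'}(s) - (\Delta g)(s) \leq ({\rm outdeg}_B(s)-1) - {\rm outdeg}_B(s) = -1 < 0$, contradicting $E \geq \vec 0$.

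The crux — and the one place acyclicity is genuinely used — is the selection of $s$ as a source of the induced sub-orientation on the top level set $B$ of the firing vector $g$; acyclicity is precisely what guarantees such an $s$ exists, and the bound ${\rm indeg}_{\O'}(s) \leq {\rm outdeg}_B(s)$ is the Euler-characteristic bookkeeping that turns this into the inequality $E(s) \leq -1$. I expect this to be the main obstacle only in the sense of finding the right set to test against; once $B$ and $s$ are chosen the computation is routine. I would also note that the argument can be repackaged through $\bar\chi$ and Lemma \ref{bar} in the style of the proof of Theorem \ref{generalizedcc}, and that the symmetric statement (a sourceless partial orientation's divisor is never linearly equivalent to the divisor of an acyclic one, viewed from the dual "acyclic game") is exactly the complementary half, so that the two arguments together certify that the winning strategies for the two solitaire games are mutually exclusive.
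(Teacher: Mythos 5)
Your proof is correct and takes essentially the same route as the paper's second argument: normalize the firing vector, restrict the acyclic orientation to its extremal level set, extract a local source there, and compute that the equivalent divisor must take a value $\leq -1$ at that vertex. The only cosmetic difference is that you isolate the key step as a standalone lemma (the divisor of an acyclic partial orientation is never linearly equivalent to an effective divisor), which is exactly the paper's Corollary \ref{r=-1}, noted there as an immediate consequence of the same argument.
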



\begin{proof}
Let $\mathcal{O}$ be an acyclic partial orientation. We first observe
$\mathcal{O}$
cannot be sourceless. Perform a directed walk backwards starting at an
arbitrary vertex. If this walk were to ever visit a vertex twice, then
$\mathcal{O}$ would contain a cycle, thus it must terminate at a source.

Suppose for the sake of contradiction that $D_{\mathcal{O}} \sim
D_{\mathcal{O}'}$ with
$\mathcal{O}'$ a sourceless partial orientation. Let $f \leq\vec{0}$
be the
vector such that $D_{\mathcal{O}} - \Delta f =D_{\mathcal{O}'}$ and
$S$, the set of
vertices $v$ such that $f(v)=0$. We claim that some $v \in S$ is a
source in $\mathcal{O}'$. Because $\mathcal{O}$ is acyclic, we know
that $\mathcal{O}$
restricted to $S$ is also acyclic, thus there exists some $v \in S$
such that $D_{\mathcal{O}}(v) \leq\mathrm{outdeg}_{S}(v)-1$. It follows that
$D_{\mathcal{O}'}(v) < 0$ since every vertex in $S$ loses at least its
outdegree in the firing of $f$, but this contradicts the assumption
that $\mathcal{O}'$ is sourceless.
\end{proof}

It is easy to see that the previous argument implies the following
stronger statement.

\begin{corollary}\label{r=-1}
Let $\mathcal{O}$ be an acyclic partial orientation. For any $D \sim
D_{\mathcal{O}}$
there exists some $v \in V(G)$ such that $D(v) \leq-1$, i.e.,
$r(D_{\mathcal{O}})=-1$.
\end{corollary}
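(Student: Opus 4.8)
The plan is to observe that the two assertions in the statement are equivalent: $r(D_{\O}) = -1$ means precisely that no divisor linearly equivalent to $D_{\O}$ is effective, which is exactly the claim that every $D \sim D_{\O}$ has some vertex $v$ with $D(v) \le -1$. So it suffices to establish the ``for any $D$'' form, and I would do this by a direct adaptation of Argument 2, the only change being that I start from an arbitrary divisor in the class rather than from a putative sourceless $\O'$. First I would take an arbitrary $D \sim D_{\O}$ and write $D = D_{\O} - \Delta f$ for some integer vector $f$. Since $\Delta$ annihilates the all-ones vector, replacing $f$ by $f - (\max_v f(v))\vec{1}$ does not change $D$, so I may assume $f \leq \vec{0}$ and that $S := \{v : f(v) = 0\}$ is nonempty. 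This is exactly the normalization used in Argument 2.

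Next, because $\O$ is acyclic, its restriction to the induced subgraph $G[S]$ is also acyclic, and therefore $\O|_{G[S]}$ has a source, i.e.\ a vertex $v \in S$ with no incoming oriented edge inside $S$. For such a $v$, every oriented edge of $G[S]$ incident to $v$ points away from it, so ${\rm indeg}_{\O}(v)$ counts only edges oriented towards $v$ from $S^c$; hence $D_{\O}(v) = {\rm indeg}_{\O}(v) - 1 \leq {\rm outdeg}_{S}(v) - 1$, since ${\rm outdeg}_{S}(v)$ is the total number of edges from $v$ to $S^c$.

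I then evaluate the firing at $v$. Writing $a_{vw}$ for the number of $v$--$w$ edges and using $f(v) = 0$ together with $f(w) \leq -1$ for every $w \in S^c$, I get $-(\Delta f)(v) = \sum_{w} a_{vw} f(w) = \sum_{w \in S^c} a_{vw} f(w) \leq -{\rm outdeg}_{S}(v)$. Combining with the previous inequality yields $D(v) = D_{\O}(v) - (\Delta f)(v) \leq ({\rm outdeg}_{S}(v) - 1) - {\rm outdeg}_{S}(v) = -1$, so $v$ is in debt. As $D$ was an arbitrary divisor linearly equivalent to $D_{\O}$, no such divisor is effective, which is the assertion $r(D_{\O}) = -1$.

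The entire combinatorial content is already present in Argument 2; nothing genuinely new is required, which is why the paper can call this an easy consequence. The main (and only minor) obstacle is bookkeeping: confirming that the normalization of $f$ can be arranged so that $f \leq \vec{0}$ with $\{f = 0\}$ nonempty, and tracking the signs carefully so that the source of $\O|_{G[S]}$ delivers the inequality $D_{\O}(v) \leq {\rm outdeg}_{S}(v) - 1$ while the contribution of the edges from $S^c$ enters $(\Delta f)(v)$ with the correct sign. Once these are in place the conclusion $D(v) \leq -1$ is immediate.
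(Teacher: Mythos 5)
Your proof is correct and is essentially the paper's intended argument: the paper simply remarks that Argument~2 of Theorem~\ref{eff2} yields the corollary, and your write-up carries out exactly that adaptation (normalize $f\leq\vec{0}$ with $\{f=0\}$ nonempty, use acyclicity of $\O|_{G[S]}$ to find a source $v$ with $D_{\O}(v)\leq{\rm outdeg}_S(v)-1$, and check that $v$ loses at least ${\rm outdeg}_S(v)$ chips under the firing). The sign bookkeeping and the reduction to the ``no equivalent effective divisor'' formulation of $r(D_{\O})=-1$ are both handled correctly.
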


We would like to be able to use the generalized cycle--cocycle reversal
system for investigating questions about arbitrary divisors of degree
at most $g-1$, but to do so we will need to understand when a divisor
is linearly equivalent to a partially orientable divisor. For
describing our algorithmic solution to this problem, we first introduce
the following modified version of the unfurling algorithm.

\begin{algorithm} \label{mod}
Modified unfurling algorithm
\end{algorithm}

\noindent{\textbf{Input:}
A partial orientation $\mathcal{O}$ and a set of sources $S$ with
$G[S]$ connected.}

\noindent{\textbf{Output:}
A partial orientation $\mathcal{O}' \sim\mathcal{O}$ such that
either}

$(i)$
 $\mathcal{O}'$ has an edge oriented toward some vertex in $S$ or

$(ii)$
 $\mathcal{O}'$ is acyclic which guarantees that for every
$\mathcal{O}'' \sim\mathcal{O}
$, $S$ is a subset of the sources. Moreover, for any $D \sim
D_{\mathcal{O}}$
with $D(s)\geq0$ for some $s \in S$, there exists $v \in V(G)$ such
that $D(v) < -1$.

\bigskip

Initialize with $X_{0}:= S$. At the beginning of each step, look at the
cut $(X_{k},X_{k}^{c})$ and perform any available edge pivots at
vertices on the boundary of $X_{k}^{c}$ which bring oriented edges into
the cut directed towards $X_{k}^{c}$. Afterwards, if there exists some
unoriented edge $(u,v)$ in $(X_{k},X_{k}^{c})$ set $X_{k} := X_{k} \cup
\{ v\}$. Otherwise, $(X_{k},X_{k}^{c})$ is consistently oriented
towards $X_{k}^{c}$ and we reverse this cut. If the cut reversal causes
an edge to be oriented towards a vertex in $S$, we output this
orientation $\mathcal{O}'$ and are in case $(i)$, otherwise set
$X_{k+1} := S$.
If we eventually reach $X_{k} =V(G)$, we output this orientation
$\mathcal{O}'$
and are in case $(ii)$.

\bigskip

We emphasize that when deciding whether to reverse a directed cut
$(X_{k},X_{k}^{c})$, we do not care whether $v$ has any incoming edges
contained in $G[X_{k}^{c}]$. This is what distinguishes the modified
unfurling algorithm from the unfurling algorithm,

\bigskip
\textbf{Correctness:} The termination of the algorithm follows by an
argument similar to the one used in the termination of the unfurling
algorithm. If the algorithm terminates with $X = V(G)$, then the
orientation $\mathcal{O}'$ produced is acyclic by an argument similar
to the
one given for the correctness of the oriented Dhar's algorithm. We next
prove that this acyclic orientation guarantees that for any divisor $D
\sim D_{\mathcal{O}'}$ such that $D(s)\geq0$ for some $s \in S$,
there exists
$v \in V(G)$ such that $D(v) <-1$.

Towards a contradiction, suppose that $D_{\mathcal{O}'} \sim D$ such that
$D(s)\geq0$ for some $s \in S$ and that $D \geq-\vec{1}$. Let $f$ be
such that $D= D_{\mathcal{O}'}-\Delta f$ with $f \leq{\vec{1}}$ and
$Y$ the
non-empty set of vertices such that $f(v) = 1$. It is always possible
to assume that $f$ is of this form by adding the appropriate multiple
of $\vec{1}$ to $f$, and we conclude that for all $v$ in $Y$, $D(v)
\leq D_{\mathcal{O}'}(v) - \mathrm{outdeg}_{Y}(v)$. We first claim that
$S \subset
Y^{c}$. Clearly $S \not\subset Y$, otherwise we would have that $D(v)
\leq-1$ for all $v \in S$ contradicting the assumption that $D(s) \geq
0$. If $S \not\subset Y$ and $S \not\subset Y^{c}$, then by the
connectedness of $G[S]$ there exists some $v \in S$ such that $v$ is in
the boundary of $Y$. The firing $f$ causes $v$ to lose a positive
number of chips and so $D(v) < -1$, contradicting the assumption that
$D \geq-\vec{1}$.

Suppose that the algorithm took $k$ rounds before terminating. Because
$S \subset Y^{c}$, there exists some point at which $Y \subset
X_{k}^{c}$, but there exists $v \in Y$ so that at the next step $v
\notin X_{k}^{c}$. This vertex was added to $X_{k}$ because it was
incident to an unoriented edge in $(X_{k},X_{k}^{c})$ and had no
incoming edges in $G[X_{k}^{c}]$. This implies that $D_{\mathcal{O}'}
(v) <
\mathrm{outdeg}_{X_{k}^{c}}(v)-1 \leq\mathrm{outdeg}_{Y}(v)-1$, thus $D(v)
\leq D_{\mathcal{O}'}(v) - \mathrm{outdeg}_{Y}(v) <-1$, contradicting the
assumption that $D \geq-\vec{1}$.

Finally, it is clear that if $\mathcal{O}' \sim\mathcal{O}''$ some
partial orientation
such that there exists $s \in S$ which is not a source in $\mathcal
{O}''$, then
$D_{\mathcal{O}''}(s)\geq0$ and $D_{\mathcal{O}''} \sim D_{\mathcal
{O}'}$. By the previous
argument, we know that there exists some $v \in V(G)$ such that
$D_{\mathcal{O}
''} <-1$, but this contradicts that $D_{\mathcal{O}''}$ is associated to a
partial orientation.

\bigskip

We now apply our modified unfurling {Algorithm~\ref{mod}} to give an
algorithmic solution to the question of when a divisor of degree at
most $g-1$ is linearly equivalent to a partially orientable divisor.

\begin{algorithm} \label{construct}
Construction of partial orientations
\end{algorithm}
\noindent\textbf{Input:} A divisor $D$ with $\mathrm{deg}(D) \leq g-1$.

\noindent
\textbf{Output:} A divisor $D'\sim D$ and a partial orientation $\mathcal
{O}$ such
that either

$(i)$
 $D' = D_{\mathcal{O}}$ or

$(ii)$ $D' \lneq D_{\mathcal{O}}$ with $\mathcal{O}$ acyclic which
guarantees that $D$
is not linearly equivalent to a partially orientable divisor.

\bigskip

We work with partial orientation-divisor pairs $(\mathcal
{O}_{i},D_{i})$ such
that at each step, $D_{\mathcal{O}_{i}} + D_{i} \sim D$. Initialize
with $(\mathcal{O}
_{0},D_{0}) = (\mathcal{O}', D - D_{\mathcal{O}'})$, where $\mathcal
{O}'$ is an arbitrary
partial orientation. At the $i$th step, let $R_{i}$ be the negative
support of $D_{i} $, $S_{i}$ be the positive support in $D_{i} $, and
$T_{i}$ be the set of vertices incident to an unoriented edge in
$\mathcal{O}
_{i}$. While $D_{i} \neq\vec{0}$, we are in exactly one of the three
following cases:

\bigskip
\noindent
\textbf{Case 1:} The set $S_{i} $ is non-empty and $\mathcal{O}_{i}$
is not a full orientation.

\bigskip

If ${\bar{S}_{i}} \cap T_{i} = \emptyset$ then $G[\bar{S_{i}}]$ is
fully oriented and $(\bar{S_{i}}, \bar{S_{i}}^{c})$ is saturated
towards $\bar{S_{i}}$. Because $\mathcal{O}_{i}$ is not a full orientation,
$\bar{S_{i}}^{c}$ is non-empty. We may reverse the cut $(\bar{S_{i}},
\bar{S_{i}}^{c})$, update $\mathcal{O}_{i}$, and continue. By
induction on the
size of $\bar{S_{i}}^{c}$, eventually $\mathcal{O}_{i}$ will be such that
${\bar{S}_{i}} \cap T_{i} \neq\emptyset$.

If ${\bar{S}_{i}} \cap T_{i} \neq\emptyset$ then there exists a
directed path $P$ from some $s \in S_{i}$ to some vertex $t \in T_{i}$.
Perform a Jacob's ladder cascade along $P$ to cause the initial edge
$e$ incident to $s$ to become unoriented. Orient $e$ towards $s$, set
$D_{i+1} = D_{i} -(s) $, and update $\mathcal{O}_{i+1}$. By induction
on
$\mathrm{deg}^{+}(D_{i})$, this will eventually terminate.

\bigskip
\noindent
\textbf{Case 2:} The set $S_{i}$ is non-empty and $\mathcal{O}_{i}$
is a full orientation.

\bigskip

We must have that $R_{i}$ is nonempty as well, otherwise we would have
that the $\mathrm{deg}(D_{i}) >g$. If ${\bar{S}_{i}} \cap R_{i} \neq
\emptyset$, then there exists a path $P$ from some $s \in S_{i}$ to
some $r \in R_{i}$. Reverse the path $P$, set $D_{i+1} = D_{i} -(s)
+(r) $, and update $\mathcal{O}_{i+1}$. Otherwise, the non-empty cut
$({\bar
{S}_{i}}, {\bar{S}_{i}}^{c})$ is saturated towards ${\bar{S}_{i}}$.
Reverse the cut, update ${\bar{S}_{i}}$, and continue.

\bigskip\noindent
\textbf{Case 3:} The set $S_{i} $ is empty.

\bigskip

We must have that $R_{i}$ is nonempty because $D_{i} \neq{\vec{0}}$.
If there exists some $r \in R_{i}$ with an edge $e$ oriented towards
$r$ in $\mathcal{O}_{i}$, unorient $e$, set $D_{i+1} = D_{i} +(r)$ and update
$\mathcal{O}_{i+1}$. Thus we may assume that $R_{i}$ is a set of
sources in $\mathcal{O}
_{i}$. Let $X$ be a maximal subset of $R_{i}$ such that $G[X]$ is
connected. Apply the modified unfurling {Algorithm~\ref{mod}} to $X$ in
$\mathcal{O}_{i}$ to find an equivalent orientation $\mathcal{O}$ in
the generalized
cocycle reversal system which is either acyclic or has an edge oriented
towards some $r \in R$. In the latter case we may again unorient an
edge pointing towards $r$, set $D_{i+1} := D_{i} +(r)$, update
$\mathcal{O}
_{i+1}$, and continue. In the former case, output $\mathcal
{O}_{i}=\mathcal{O}$ as in $(ii)$.

\bigskip
\textbf{Correctness:}
It is easy to check that for $1\leq i\neq j \leq3$, we can go from
Case $i$ to Case $j$ only if $j >i$. Therefore, termination of the
algorithm follows directly from the termination of each particular
case. The correctness of the algorithm is clear except in Case 3 when
we output $\mathcal{O}$ as in $(ii)$. In this case $S_{i} = \emptyset
$, thus
$D_{i} \leq\vec{0}$ and is supported on the set $R_{i}$. We need to
prove that $\mathcal{O}$ guarantees that $D$ is not linearly
equivalent to a
partially orientable divisor. Suppose that in fact $D \sim D_{\tilde
{\mathcal{O}}}$ for $\tilde{\mathcal{O}}$ some partially orientable
divisor so that
$D_{\mathcal{O}}+ D_{i} \sim D_{\tilde{O}}$. Let $r \in R_{i}$ be in
the set
$X$ from Case 3. We have that $D_{\tilde{\mathcal{O}}} (r) \geq0$,
and by
$(ii)$ of the modified unfurling {Algorithm~\ref{mod}}, we conclude that
there exists some $v \in V(G)$ such that $D_{\tilde{\mathcal{O}}}(v)
<-1$, but
this contradicts the assumption that $D_{\tilde{\mathcal{O}}}$ is
partially orientable.

\bigskip

In section~\ref{MFMC} we will describe a variation of {Algorithm~\ref{construct}}, namely {Algorithm~\ref{2nd}}, which utilizes max-flow
min-cut. As an immediate corollary of {Algorithm~\ref{construct}} we
obtain the following result.

\begin{corollary}[An--Baker--Kuperberg--Shokrieh, Theorem~4.7 \cite{an2013canonical}]
\label{full}
Every divisor $D$ of degree $g-1$ is linearly equivalent to an
orientable divisor.
\end{corollary}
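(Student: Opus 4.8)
The plan is to apply Algorithm \ref{construct} directly; the only additional ingredient is a degree count that distinguishes full orientations among all partial orientations. First I would record the elementary observation that for any partial orientation $\O$ of $G$,
$$ {\rm deg}(D_{\O}) = \sum_{v \in V(G)} ({\rm indeg}_{\O}(v) - 1) = (\text{number of oriented edges of } \O) - |V(G)|, $$
since each oriented edge contributes exactly $1$ to the indegree of its head. Consequently ${\rm deg}(D_{\O}) \leq |E(G)| - |V(G)| = g-1$, with equality if and only if every edge is oriented, i.e.\ $\O$ is full. This is precisely the fact that upgrades ``partially orientable'' to ``orientable'' in the top degree.

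Next I would run Algorithm \ref{construct} on $D$, which is legitimate because ${\rm deg}(D) = g-1 \leq g-1$. It returns a divisor $D' \sim D$ together with a partial orientation $\O$ landing in one of the two advertised cases. I would rule out Case $(ii)$ by a degree argument: there one has $D' < D_{\O}$, so ${\rm deg}(D') < {\rm deg}(D_{\O}) \leq g-1$, whereas ${\rm deg}(D') = {\rm deg}(D) = g-1$ since linear equivalence preserves degree. This contradiction forces the algorithm into Case $(i)$, where $D' = D_{\O}$. Then ${\rm deg}(D_{\O}) = {\rm deg}(D') = g-1$, so by the degree observation $\O$ must be a full orientation, and $D \sim D' = D_{\O}$ is linearly equivalent to the orientable divisor $D_{\O}$, as desired.

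The argument has no substantial obstacle beyond this degree bookkeeping: all of the real work is absorbed into the correctness of Algorithm \ref{construct}, and in particular the guarantee attached to its Case $(ii)$, which we are permitted to assume. The only point demanding a moment's care is that Case $(ii)$ outputs a \emph{strict} domination $D' < D_{\O}$ rather than an equality; it is exactly this strictness that produces the strict degree drop ${\rm deg}(D') < {\rm deg}(D_{\O})$ and hence the contradiction ruling out Case $(ii)$ in the top degree.
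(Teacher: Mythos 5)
Your proof is correct and follows essentially the same route as the paper: both run Algorithm \ref{construct} and rule out Case $(ii)$ via the degree inequality ${\rm deg}(D') < {\rm deg}(D_{\O}) \leq g-1$ against ${\rm deg}(D') = g-1$. The only difference is that you make explicit the observation that a partial orientation whose associated divisor has degree $g-1$ must be full, a point the paper leaves implicit; this is a worthwhile clarification but not a different argument.
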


\begin{proof}
Suppose that $D$ is not linearly equivalent to an orientable divisor.
It follows from {Algorithm~\ref{construct}} that $D$ is linearly
equivalent to $D' \lneq D_{\mathcal{O}}$, where $\mathcal{O}$ is an
acyclic partial
orientation. But then $g-1 = \mathrm{deg}(D') < \mathrm{deg}(D_{\mathcal
{O}}) \leq g-1$, a contradiction.
\end{proof}

The following theorem provides a characterization of when a divisor is
linearly equivalent to a partially orientable divisor in terms of its
Baker--Norine rank.

\begin{theorem}\label{partialrank}
A divisor $D$ is linearly equivalent to a partially orientable divisor
$D_{\mathcal{O}}$ if and only if $\mathrm{deg}(D) \leq g-1$ and $r(D
+{\vec{1}})
\geq0$.
\end{theorem}

\begin{proof}
If $D$ is linearly equivalent to a partially orientable divisor
$D_{\mathcal{O}
}$, then $D_{\mathcal{O}} +\vec{1} \geq\vec{0}$ and $\mathrm{deg}(D)
\leq g-1$.
Conversely, suppose that $\mathrm{deg}(D) \leq g-1$ and $D \sim D' \geq-
\vec{1}$. If we apply {Algorithm~\ref{construct}} to $D'$ starting with
the empty orientation, we will always be in Case 1 and the algorithm
will necessarily succeed in producing a partial orientation $\mathcal
{O}$ with
$D_{\mathcal{O}}\sim D' \sim D$.
\end{proof}

The following theorem unifies the main results of this section and
suggests that for understanding the ranks of divisors of degree at most
$g-1$, we need only investigate acyclic and sourceless partial
orientations. In the following section we will refine this result with
{Theorem~\ref{path}} which provides a new interpretation of the
Baker--Norine rank.

\begin{theorem}\label{eff1}
Let $D$ be a divisor with $\mathrm{deg}(D)\leq g-1$, then

(i) $r(D) = -1$ if and only if $D \sim D' \leq D_{\mathcal{O}}$ with
$\mathcal{O}$ an
acyclic partial orientation.

(ii) $r(D) \geq0$ if and only if $D \sim D_{\mathcal{O}}$ with
$\mathcal{O}$ a
sourceless partial orientation.
\end{theorem}

\begin{proof}
$(i)$: Suppose that $D \sim D' \leq D_{\mathcal{O}}$ with $\mathcal
{O}$ an acyclic
partial orientation. By {Corollary~\ref{r=-1}}, we know that
$r(D_{\mathcal{O}})
= -1$, which implies $r(D') = -1$ as $D' \leq D_{\mathcal{O}}$, and
$r(D) =
-1$. To obtain the converse, we take $D$ with $r(D) =-1$ and apply
{Algorithm~\ref{construct}} to $D$. Either we obtain the desired acyclic
partial orientation $\mathcal{O}$, or we find that in $D \sim
D_{\mathcal{O}}$ with $\mathcal{O}
$ not acyclic. Now apply the unfurling {Algorithm~\ref{unfurl}} to
$\mathcal{O}$
to obtain an orientation $\mathcal{O}'$ which is either sourceless or acyclic.
By assumption, $r(D) =-1$, therefore $\mathcal{O}$ is not sourceless,
and thus
must be acyclic. Clearly we have $D_{\mathcal{O}'} \sim D$.

$(ii)$: If $D \sim D_{\mathcal{O}}$ with $\mathcal{O}$ a sourceless partial
orientation, then clearly $r(D) \geq0$. Conversely, take $D$ with
$r(D) \geq0$ and apply {Algorithm~\ref{construct}} to $D$. Because
$r(D) \geq0$ we also know that $r(D + \vec{1}) \geq0$, thus by
{Theorem~\ref{partialrank}} we will output some orientation $\mathcal
{O}$ with
$D_{\mathcal{O}} \sim D$. If $\mathcal{O}$ is sourceless we are done,
otherwise we
apply the unfurling {Algorithm~\ref{unfurl}} which will give us some
other orientation $\mathcal{O}'$ with $D_{\mathcal{O}'} \sim D$ such
that $\mathcal{O}'$ is
either sourceless or acyclic. By {Corollary~\ref{r=-1}}, it is
impossible that $\mathcal{O}'$ is acyclic, hence it is sourceless.
\end{proof}

\section{Directed path reversals and the Riemann--Roch formula}
\label{dpr&RR}

In this section we investigate directed path reversals and their
relationship to Riemann--Roch theory for graphs. Theorem \ref{path} establishes that the Baker--Norine rank
of a divisor associated to a partial orientation is one less than the minimum number of directed paths which need to be
reversed in the generalized cocycle reversal system to produce an
acyclic partial orientation. To prove this characterization, we apply
$q$-connected partial orientations, which generalize the $q$-connected
orientations. We then apply this characterization of rank, together
with results from section~\ref{Dhar}, to give a new proof of the
Riemann--Roch theorem for graphs. Baker and Norine's original argument
proceeds by a formal reduction to statements which they call RR1 and
RR2. We instead employ a variant of this reduction introducing
strengthened versions of RR1 and RR2. While our Strong RR2 is an
immediate consequence of Riemann--Roch, Strong RR1 is not, and appears
to be new to the literature.

\begin{lemma}[An--Baker--Kuperberg--Shokrieh {\cite[Theorem
4.12]{an2013canonical}} and Gioan {\cite[Proposition
4.7]{gioan2007enumerating}}] \label{q-full}
Every full orientation is equivalent in the cocycle reversal system to
a $q$-connected orientation.
\end{lemma}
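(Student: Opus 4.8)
The plan is to prove this by iterating a single, canonically-defined cut reversal and inducting on the number of vertices not reachable from $q$. Given a full orientation $\O$, consider $\bar q$, the set of vertices reachable from $q$ by a directed path. If $\bar q = V(G)$ then $\O$ is already $q$-connected and there is nothing to prove, so assume $\bar q \neq V(G)$.

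The key observation is that the cut $(\bar q, \bar q^c)$ is consistently oriented towards $\bar q$. Indeed, no edge of this cut can be oriented away from $\bar q$: if some edge pointed from a vertex of $\bar q$ to a vertex $w \in \bar q^c$, then $w$ would itself be reachable from $q$, contradicting $w \in \bar q^c$. Since $G$ is connected and $\bar q^c$ is non-empty, the cut is non-empty, hence it is a consistently oriented cut. We may therefore reverse it to obtain a full orientation $\O'$ equivalent to $\O$ in the cocycle reversal system (cut reversals preserve fullness and use no edge pivots, so we remain inside the non-generalized system for full orientations, as required).

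I would then show that the reachable set strictly grows under this reversal. First, every vertex of $\bar q$ remains reachable in $\O'$: a directed path witnessing $w \in \bar q$ has all of its vertices reachable from $q$, hence lies entirely in $\bar q$ and uses only edges of $G[\bar q]$, which the reversal leaves untouched. Second, at least one new vertex becomes reachable: choosing any edge of the non-empty cut, after reversal it points from some $u \in \bar q$ to some $v \in \bar q^c$, and since $u$ is reachable so now is $v$. Thus the reachable set of $\O'$ strictly contains $\bar q$, and the claim follows by induction on $|V(G)| - |\bar q|$.

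The argument is short and self-contained; the step I would be most careful about is the reachability bookkeeping after the reversal, namely verifying simultaneously that no previously reachable vertex is lost — which relies precisely on the fact that witnessing paths cannot escape $\bar q$ across an inward-pointing cut — and that the non-emptiness of the cut forces a genuine gain. This is the only real subtlety, and once it is nailed down the induction closes immediately.
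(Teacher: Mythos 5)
Your proof is correct and is essentially the same as the paper's: both identify that the cut $(\bar q, \bar q^c)$ is saturated towards $\bar q$, reverse it, and induct on $|\bar q^c|$. Your additional bookkeeping verifying that $\bar q$ strictly grows after the reversal is a fuller justification of the induction step that the paper leaves implicit.
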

\begin{proof}
Suppose that $\mathcal{O}$ is a full orientation which is not $q$-connected,
then $\bar{q} \neq V(G)$ and $(\bar{q},\bar{q}^{c})$ is saturated
towards $\bar{q}$. We can reverse this cut and induct on $|\bar{q}^{c}|$.
\end{proof}
In fact, both authors show that this orientation is unique in the
cocycle reversal system and apply this observation to show that the set
of divisors associated to $q$-connected full orientations is equal to
the number of spanning trees of $G$. We will only need the existence
part of their statement. In {Theorem~\ref{qconexists}} we present a
generalization of {Lemma~\ref{q-full}} for partial orientations.

\begin{lemma}[RR1]\label{RR1}
If $r(D)=-1$ then there exists $\nu\in{\mathcal{N}}$ such that
$D\leq\nu$.

\end{lemma}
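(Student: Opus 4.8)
The plan is to combine the rank~$-1$ characterization of Theorem~\ref{eff1}(i) with the extension of an acyclic partial orientation to a full acyclic orientation, and then to upgrade the resulting ``up to linear equivalence'' statement into the genuine domination demanded by the lemma.

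First I would reduce to the case ${\rm deg}(D)\le g-1$, which is forced by the hypothesis and is also what is needed to apply the results of Section~\ref{Dhar}. It suffices to check that every divisor of degree at least $g$ has nonnegative rank. One reduces to ${\rm deg}(D)=g$ by subtracting a fixed effective divisor of degree ${\rm deg}(D)-g$, and in that case Corollary~\ref{full} gives $D-(q)\sim D_{\O}$ for a full orientation $\O$, which by Lemma~\ref{q-full} may be taken $q$-connected. In a $q$-connected orientation every vertex other than $q$ is reachable from $q$ and hence has positive indegree, so $D_{\O}(v)\ge 0$ for $v\ne q$ and $D_{\O}(q)\ge -1$; therefore $D_{\O}+(q)\ge \vec{0}$ and $D\sim D_{\O}+(q)$ is effective. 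Consequently $r(D)=-1$ implies ${\rm deg}(D)\le g-1$.

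Next, since $r(D)=-1$ and ${\rm deg}(D)\le g-1$, Theorem~\ref{eff1}(i) produces a divisor $\widehat{D}\sim D$ and an acyclic partial orientation $\O$ with $\widehat{D}\le D_{\O}$. I would extend $\O$ to a full acyclic orientation $\O^{+}$ by orienting each unoriented edge forward along a linear extension of the acyclic digraph $\O$; each such step only raises one indegree, so $D_{\O^{+}}\ge D_{\O}\ge \widehat{D}$. As $\O^{+}$ is a full orientation, ${\rm deg}(D_{\O^{+}})=g-1$, and by Corollary~\ref{r=-1} we have $r(D_{\O^{+}})=-1$; hence $D_{\O^{+}}\in \mathcal{N}$.

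The final and most delicate step is to replace the equivalence $\widehat{D}\sim D$ by an honest inequality for $D$ itself. Because both degree and rank are invariants of a linear equivalence class, $\mathcal{N}$ is a union of such classes. Writing $D=\widehat{D}+\Delta f$ for an integer vector $f$, I set $\nu:=D_{\O^{+}}+\Delta f$; then $\nu\sim D_{\O^{+}}$, so $\nu\in\mathcal{N}$, while adding the same vector $\Delta f$ to both sides of $\widehat{D}\le D_{\O^{+}}$ gives $D=\widehat{D}+\Delta f\le D_{\O^{+}}+\Delta f=\nu$. This yields the required $\nu\in\mathcal{N}$ with $D\le\nu$. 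I expect this translation trick to be the crux of the argument: Theorem~\ref{eff1}(i) only controls $D$ up to linear equivalence, and the key point is that one may transport the dominating non-special divisor by the very Laplacian vector realizing $\widehat{D}\sim D$, which is legitimate precisely because $\mathcal{N}$ is closed under linear equivalence. The degree reduction of the first step is the only other place requiring real care.
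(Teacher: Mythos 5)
Your proof is correct and follows essentially the same route as the paper: reduce to ${\rm deg}(D)\le g-1$ via Corollary \ref{full} and Lemma \ref{q-full}, then apply Theorem \ref{eff1}(i) and extend the acyclic partial orientation to a full acyclic orientation. Your final step, transporting the dominating divisor by the Laplacian vector $\Delta f$ so that $D$ itself (not merely a representative of its class) is dominated, is left implicit in the paper's own proof, so making it explicit is a welcome bit of extra care rather than a deviation.
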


\begin{proof}

Let $D$ be a divisor with $r(D) =-1$. Suppose that $\mathrm{deg}(D) \geq
g$, and let $D' = D - E$ with $\mathrm{deg}(D') = g-1$ and $E \geq\vec
{0}$. Let $\mathcal{O}$ be an orientation with $D_{\mathcal{O}} \sim
D'$ as guaranteed
by {Corollary~\ref{full}}. By {Lemma~\ref{q-full}}, we may take $\mathcal
{O}$ to
be $q$-connected with $q \in\mathrm{supp}(E)$. It follows that $D \sim
D_{\mathcal{O}}+ E \geq0$ and $r(D) \geq0$, a contradiction. We
conclude that
$\mathrm{deg}(D) \leq g-1$.

By {Theorem~\ref{eff1}} $(i)$, there exists a divisor $D'' \sim D$ such
that $D'' \leq D_{\mathcal{O}}$ where $\mathcal{O}$ is an acyclic
partial orientation.
It is a classical fact, whose proof we now give, that any acyclic
partial orientation can be extended to a full acyclic orientation
$\mathcal{O}
'$. Let $e = (u,v)$ be some unoriented edge in $\mathcal{O}$ and
suppose that
both orientations of $e$ cause a directed cycle to appear. This implies
that there exist directed paths in $\mathcal{O}$ from $u$ to $v$ and
$v$ to
$u$, hence a directed cycle was already present in $\mathcal{O}$, a
contradiction. Alternately, if we view $\mathcal{O}$ as a poset, we
may take a
\textit{linear extension} of $\mathcal{O}$ and orient the remaining
edges in
$G$ according to this order. The divisor $D_{\mathcal{O}'}$ has degree $g-1$
and negative rank by {Theorem~\ref{eff1}} $(i)$.
\end{proof}

We will need the following slight strengthening of RR1.

\begin{corollary}\label{RR1a}
Let $D$ be a divisor with $r(D)=-1$, then for all integers $k$ with
$\mathrm{deg}(D)\leq k \leq g-1$, there exists a divisor $\nu_{k}$ with
$\mathrm{deg}(\nu_{k})=k$, $D \leq\nu_{k}$, and $r(\nu_{k}) = -1$.
\end{corollary}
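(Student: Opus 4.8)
The plan is to derive the corollary from Lemma \ref{RR1} by interpolating chips one at a time between $D$ and a maximal non-special divisor dominating it, and then exploiting monotonicity of the rank. First I would invoke Lemma \ref{RR1} to produce a divisor $D' \in \mathcal{N}$ with $D \leq D'$; recall that the proof of that lemma also established that $r(D) = -1$ forces ${\rm deg}(D) \leq g-1$, so the range $ {\rm deg}(D)\leq k \leq g-1$ is non-empty and $D' - D \geq \vec{0}$ is an effective divisor of degree $(g-1) - {\rm deg}(D)$.

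The single tool I need is the standard monotonicity of the Baker--Norine rank: if $A \leq B$ then $r(A) \leq r(B)$. I would prove this directly from the definition, via the characterization that $r(A) \geq s$ precisely when $A - E$ is linearly equivalent to an effective divisor for every effective $E$ with ${\rm deg}(E) = s$. Indeed, given such an $E$, if $A - E \sim F$ with $F \geq \vec{0}$, then $B - E = (A - E) + (B - A) \sim F + (B - A) \geq \vec{0}$ since $B - A \geq \vec{0}$, so $B - E$ is again equivalent to an effective divisor; hence $r(A) \geq s$ implies $r(B) \geq s$.

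With monotonicity available, the corollary follows by a squeezing argument. I would form a chain $D = \mu_0 \leq \mu_1 \leq \cdots \leq \mu_m = D'$, where $m = (g-1) - {\rm deg}(D)$, obtaining each $\mu_{j+1}$ from $\mu_j$ by adding a single chip at some vertex in the support of $D' - \mu_j$; then ${\rm deg}(\mu_j) = {\rm deg}(D) + j$ and $D \leq \mu_j \leq D'$ for every $j$. Applying monotonicity to both inequalities gives $-1 = r(D) \leq r(\mu_j) \leq r(D') = -1$, so $r(\mu_j) = -1$ for all $j$. Setting $\nu_k = \mu_{k - {\rm deg}(D)}$ then yields a divisor of degree $k$ with $D \leq \nu_k$ and $r(\nu_k) = -1$, as required.

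The argument is short, and its only genuine ingredient is the monotonicity lemma; the essential observation is that the rank along the chain is trapped between two values both equal to $-1$, so it cannot jump at any intermediate step. The main point to be careful about is to keep every intermediate divisor dominated by $D'$ (and not merely dominating $D$), so that both bounds of the squeeze remain in force throughout the chain.
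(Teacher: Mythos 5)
Your proof is correct and follows essentially the same route as the paper: both invoke Lemma \ref{RR1} to get a non-special divisor dominating $D$, interpolate an intermediate divisor of degree $k$ between $D$ and that divisor, and conclude $r(\nu_k)=-1$ by monotonicity of the rank under domination. Your one-chip-at-a-time chain and the explicit monotonicity argument are just a more detailed rendering of the paper's choice of a subdivisor $E'\leq \nu - D$ of the appropriate degree, and the lower half of your squeeze is not even needed since the rank is always at least $-1$.
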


\begin{proof}
By RR1, there exists $\nu$ with $\mathrm{deg}(\nu)=g-1$, $D \leq\nu$,
and $r(\nu) = -1$. Let $\nu- D = E \geq\vec{0}$, and take $E'$ a
divisor with $\vec{0} \leq E' \leq E$ and $\mathrm{deg}(E')=k-\mathrm{deg}(D)$. We claim that $D + E' = \nu_{k}$ satisfies the conditions of
the theorem. This is clear as $\nu_{k} \leq\nu$ and $r(\nu)=-1$,
thus $r(\nu_{k})=-1$.
\end{proof}

The following is a slight variation on Baker and Norine's
\cite[Lemma 2.7]{baker2007riemann}.

\begin{lemma} \label{formula} Let $D$ be a divisor with $\mathrm{deg}(D)
= k \leq g-1$, then
\begin{eqnarray*}r(D) = \min_{\nu\in\mathcal{N}_{k}} \mathrm{deg}^{+}(D-\nu)-1.
\end{eqnarray*}
\end{lemma}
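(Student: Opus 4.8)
The plan is to establish the formula by proving the two inequalities $r(D)+1 \leq {\rm deg}^+(D-\nu)$ for every $\nu \in \mathcal{N}_k$ and $r(D)+1 \geq \min_{\nu \in \mathcal{N}_k}{\rm deg}^+(D-\nu)$ separately, following the shape of Baker and Norine's Lemma 2.7 but with the degree-matched set $\mathcal{N}_k$ in place of their fixed non-special set $\mathcal{N}$. Throughout I will use two routine monotonicity facts: that ${\rm deg}^+$ is monotone (if $A \leq B$ and $B \geq \vec{0}$ then ${\rm deg}^+(A) \leq {\rm deg}(B)$, since each positive part of $A$ is bounded by the corresponding entry of $B$), and that rank is monotone under $\leq$ (if $A \leq B$ then $r(A) \leq r(B)$, proved by adding the effective difference $B-A$ to an effective representative of $A-E$). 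I will also repeatedly invoke the defining reformulation of rank: $r(D)+1$ is the least degree of an effective $E$ for which $D-E$ fails to be equivalent to an effective divisor, equivalently for which $r(D-E)=-1$.

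For the inequality $r(D)+1 \leq {\rm deg}^+(D-\nu)$, I would fix $\nu \in \mathcal{N}_k$ and write $D-\nu = P - N$ with $P,N \geq \vec{0}$ of disjoint support, so that ${\rm deg}^+(D-\nu) = {\rm deg}(P)$. Then $D-P = \nu - N \leq \nu$, and since $r(\nu) = -1$ the monotonicity of rank forces $r(D-P) = -1$. Thus $P$ is an effective divisor of degree ${\rm deg}^+(D-\nu)$ witnessing that $D-P$ is not equivalent to an effective divisor, and the reformulation of rank gives $r(D)+1 \leq {\rm deg}^+(D-\nu)$. Minimizing over $\nu$ yields the first inequality.

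For the reverse inequality I would exhibit a single $\nu \in \mathcal{N}_k$ achieving the bound. Set $r = r(D)$ and choose an effective $E$ with ${\rm deg}(E)=r+1$ and $r(D-E)=-1$, which exists by the reformulation of rank. By RR1 (Lemma \ref{RR1}) a rank $-1$ divisor has degree at most $g-1$, and when $k \leq g-1$ the integer $k$ lies in the admissible range $[\,{\rm deg}(D-E),\,g-1\,]$ of Corollary \ref{RR1a} applied to $D-E$. That corollary then produces $\nu$ with ${\rm deg}(\nu)=k$, $D-E \leq \nu$, and $r(\nu)=-1$, so $\nu \in \mathcal{N}_k$. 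From $D-E \leq \nu$ we obtain $D-\nu \leq E$, and since $E \geq \vec{0}$ the monotonicity of ${\rm deg}^+$ gives ${\rm deg}^+(D-\nu) \leq {\rm deg}(E) = r+1 = r(D)+1$, completing the argument.

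The main obstacle is the reverse inequality, and more precisely the requirement that the witnessing $\nu$ have degree exactly $k$ rather than $g-1$: this is exactly what the strengthened RR1 (Corollary \ref{RR1a}) supplies, so the whole proof hinges on having that refinement available. I would also note that the content of the formula lives at $k \leq g-1$, since $\mathcal{N}_g = \emptyset$ by RR1; the stated range $k \leq g$ should be read with this degeneracy in mind, the only substantive cases being ${\rm deg}(D) \leq g-1$.
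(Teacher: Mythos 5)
Your proof is correct and follows essentially the same route as the paper's: the same two-inequality structure, with Corollary \ref{RR1a} supplying the degree-$k$ non-special divisor $\nu_k \geq D-E_1$ in the reverse direction. The only difference is cosmetic --- where the paper argues by contradiction that ${\rm supp}(E_1)\cap{\rm supp}(E_2)=\emptyset$ so that ${\rm deg}^+(D-\nu_k)={\rm deg}(E_1)$ exactly, you settle for the one-sided bound ${\rm deg}^+(D-\nu)\leq{\rm deg}(E)$ and let the first inequality close the gap, a slight streamlining; your observation that the statement is vacuous at $k=g$ (since $\mathcal{N}_g=\emptyset$ by RR1) is a fair point the paper leaves implicit.
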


\begin{proof}

Let $E$ be a divisor of degree $0$ such that $D-E= \nu$ with ${\nu\in
\mathcal{N}_{k}}$ achieving the minimum value of $ \mathrm{deg}^{+}(D-\nu
)-1$. Let $E_{1}, E_{2} \geq{\vec{0}}$ be effective divisors with
disjoint supports such that $E = E_{1} -E_{2}$. We have that $\mathrm{deg}^{+}(D-\nu) =\mathrm{deg}^{+}(E_{1}-E_{2}) = \mathrm{deg}(E_{1})$ and
$D- E_{1} = \nu- E_{2}$ so $r(D-E_{1}) =-1$, which implies $r(D) \leq
\mathrm{deg}^{+}(D-\nu)-1$.

We now show the reverse inequality. Take $E_{1}\geq{\vec{0}}$ with
$r(D) = \mathrm{deg}(E_{1})-1$ and $r(D-E_{1})=-1$.
 By {Corollary~\ref{RR1a}} there exists some effective
 divisor $E_{2}$ such that
$D-E_{1}+E_{2} = \nu_{k}$ for some ${\nu_{k} \in\mathcal{N}_{k}}$.
We claim that $E_{1}$ and $E_{2}$ have disjoint supports. Suppose that
this is not so, and let $v \in V(G)$ be in the support of both $E_{1}$
and $E_{2}$. It follows that $D - (E_{1} - (v)) \leq\nu_{k}$, hence
$r(D - E_{1} + (v)) = -1$ and $r(D) < \mathrm{deg}(E_{1}) - 1$, a
contradiction. It follows that $r(D) = \mathrm{deg}(E_{1}) = \mathrm{deg}^{+}(D-\nu_{k})-1 \geq\mathrm{deg}^{+}(D-\nu)-1 $, where $\nu$
attains the minimum value of the function over all $\nu\in\mathcal{N}_{k}$.
\end{proof}

\begin{lemma}[Strong RR1]
If $\mathrm{deg}(D) =k \leq g-1$ then there exists a divisor $D'$ such that $D \leq D'$, $\mathrm{deg}(D') = g-1$, and $r(D) = r(D') $.
\end{lemma}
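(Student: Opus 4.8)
Write $r := r(D)$ and $k := {\rm deg}(D)$. The plan is to produce an effective divisor $F$ with ${\rm deg}(F) = g-1-k$ and to set $D' := D + F$, so that $D' \geq D$ and ${\rm deg}(D') = g-1$ hold automatically. Rank is monotone under domination: if $D_1 \leq D_2$ then $r(D_1) \leq r(D_2)$, since adding the effective divisor $D_2 - D_1$ to both sides of a linear equivalence $D_1 - E \sim E'$ with $E' \geq \vec 0$ preserves effectivity. Hence $r(D') \geq r$ for free, and the whole task reduces to engineering $F$ so that the rank does not strictly increase, i.e. so that $r(D') \leq r$.

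To control $r(D')$ from above I would invoke the rank formula of Lemma \ref{formula}. Applying it to $D$ (legitimate since $k \leq g-1 \leq g$), fix $\nu \in \mathcal{N}_k$ attaining the minimum, so that ${\rm deg}^+(D-\nu) = r+1$. Now $\nu$ has degree $k$ and rank $-1$, so Corollary \ref{RR1a} lets me enlarge it: there is $\nu_{g-1} \in \mathcal{N}$ (degree $g-1$, rank $-1$) with $\nu \leq \nu_{g-1}$. Set $F := \nu_{g-1} - \nu \geq \vec 0$, which has degree $g-1-k$, and put $D' := D + F$.

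The key observation is that $F$ is added to both $D$ and $\nu$, so that $D' - \nu_{g-1} = (D+F) - (\nu + F) = D - \nu$ as divisors; in particular ${\rm deg}^+(D' - \nu_{g-1}) = {\rm deg}^+(D-\nu) = r+1$. Applying Lemma \ref{formula} to $D'$ (which has degree $g-1 \leq g$) and using $\nu_{g-1} \in \mathcal{N} = \mathcal{N}_{g-1}$ as a single, not necessarily optimal, test divisor gives $r(D') \leq {\rm deg}^+(D' - \nu_{g-1}) - 1 = r$. Combined with $r(D') \geq r$ from monotonicity, this yields $r(D') = r = r(D)$, as required. Note that the case $r = -1$ recovers exactly RR1 together with Corollary \ref{RR1a}, which is a useful consistency check.

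I expect essentially all the content to lie in the selection of $F$: one must convert a minimizer of the rank formula for $D$ into a viable test divisor for $D'$ while preserving ${\rm deg}^+$, and this is precisely what the simultaneous shift $\nu \mapsto \nu + F$, $D \mapsto D + F$ accomplishes. The remaining ingredients — monotonicity of rank and the extension of a negative-rank divisor up to degree $g-1$ furnished by Corollary \ref{RR1a} — are routine, so there is no genuine analytic obstacle. The one point to verify carefully is that ${\rm deg}^+$ really is invariant under adding the same effective divisor to both arguments, which holds here because $D'-\nu_{g-1}$ equals $D-\nu$ on the nose rather than merely up to linear equivalence.
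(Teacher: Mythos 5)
Your proof is correct and follows essentially the same route as the paper: both arguments pick a minimizer $\nu \in \mathcal{N}_k$ from Lemma \ref{formula}, lift it via RR1 to some $\nu + E \in \mathcal{N}$, set $D' = D + E$, and then use $\deg^+(D+E-(\nu+E)) = \deg^+(D-\nu)$ together with Lemma \ref{formula} applied to $D'$ to bound $r(D')$ from above, with monotonicity of rank supplying the reverse inequality. Your writeup just makes the invariance of $\deg^+$ under the simultaneous shift slightly more explicit.
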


\begin{proof}
Let ${\nu\in\mathcal{N}_{k}}$ which achieves the minimum value of
$\mathrm{deg}^{+}(D-\nu)$. By {Lemma~\ref{RR1}}, there exists some $E\geq
0$ such that $\nu+E \in\mathcal{N}$. We claim that $ r(D +E) =
r(D)$. Clearly $r(D+E)\geq r(D)$, and we now establish the reverse
inequality. By {Corollary~\ref{formula}}:
\begin{eqnarray*}
r(D+E) +1 &=& \min_{\nu' \in\mathcal{N}_{g-1}}\mathrm{deg}^{+}(D+E-\nu') \leq\mathrm{deg}^{+}(D+E-(\nu+E)) =
\\
& =& \mathrm{deg}^{+}(D-\nu) = r(D) +1.
\qedhere
\end{eqnarray*}
\end{proof}

\begin{lemma}\label{parqcon}
A partial orientation $\mathcal{O}$ which is either sourceless or has
$q$ as
its unique source is equivalent in the generalized cocycle reversal
system to a $q$-connected partial orientation $\mathcal{O}'$.
\end{lemma}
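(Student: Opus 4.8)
The plan is to imitate the proof of Lemma~\ref{q-full} for full orientations, inducting on $|\bar{q}^c|$, but to use edge pivots to overcome the one feature absent in the partial setting: the cut $(\bar{q},\bar{q}^c)$ need not be saturated. Write $W = \bar{q}^c$ for the set of vertices \emph{not} reachable from $q$, and assume $W \neq \emptyset$. By the definition of reachability, no oriented edge runs from $\bar{q}$ into $W$ (otherwise its head would be reachable), so every oriented edge of the cut $(\bar{q},W)$ points into $\bar{q}$; the only obstruction to saturation is the presence of unoriented cut edges. I would first record the structural consequence of the hypothesis: since the only possible source of $\O$ is $q \in \bar{q}$, every $w \in W$ has ${\rm indeg}(w) \geq 1$, and because no oriented edge enters $W$ from $\bar{q}$, all of these incoming edges lie inside $G[W]$.

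The induction step splits into two cases according to whether the cut $(\bar{q},W)$ contains an unoriented edge. If it does, say $e = uw$ with $u \in \bar{q}$ and $w \in W$, I would perform a single edge pivot at $w$: unorient one of the edges $w' \to w$ oriented toward $w$ from inside $W$ (which exists by the previous paragraph) and orient $e$ toward $w$. This pivot preserves $D_{\O}$, changes no edge of $G[\bar{q}]$ nor any cut edge pointing into $\bar{q}$, and creates the directed step $u \to w$; hence every vertex of the old $\bar{q}$ remains reachable and $w$ now becomes reachable, so the reachable set strictly grows. If instead the cut has no unoriented edge, then it is saturated toward $\bar{q}$, and I would reverse it; as in Lemma~\ref{q-full} this flips every cut edge to point from $\bar{q}$ into $W$, making each vertex of $W$ that meets the cut reachable and again strictly enlarging $\bar{q}$. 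In both cases $|W|$ strictly decreases, so after finitely many steps we reach $\bar{q} = V(G)$, i.e.\ a $q$-connected $\O'$ equivalent to $\O$ in the generalized cocycle reversal system.

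The step requiring care is verifying that each move keeps us inside the hypothesis, so that the induction is legitimate; this is what makes the source condition essential rather than cosmetic. For the pivot, the source set is literally unchanged: ${\rm indeg}(w)$ is preserved, $w'$ only loses an outgoing edge, and $u$ only gains one, so no vertex acquires indegree $0$. For the cut reversal I would check that no new source appears and that $q$ is untouched: a vertex $u \in \bar{q}\setminus\{q\}$ lies on a directed $q$-to-$u$ path all of whose vertices are reachable, hence all of whose edges lie in $G[\bar{q}]$, so $u$ retains an incoming edge after the reversal; vertices of $W$ only gain incoming edges; and $q$, being a source, is incident to no cut edge oriented toward it, so it remains a (indeed the unique) source. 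Thus the property ``sourceless or $q$ the unique source'' is maintained throughout, and the anticipated difficulty---that naively reversing the possibly unsaturated cut is not a legal move---is resolved by the observation that an unoriented cut edge can instead be pivoted directly into $W$, which is both simpler and exactly what enlarges reachability.
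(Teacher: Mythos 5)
Your proposal is correct and follows essentially the same route as the paper: induct on $|\bar{q}^c|$, use an edge pivot at a boundary vertex of $\bar{q}^c$ (swapping an internal incoming edge for an unoriented cut edge) to enlarge $\bar{q}$ when the cut is unsaturated, and otherwise observe that the source hypothesis forces the cut $(\bar{q},\bar{q}^c)$ to be saturated towards $\bar{q}$ so it can be reversed. Your extra verification that the ``sourceless or $q$ the unique source'' condition persists through each move is a worthwhile addition that the paper leaves implicit.
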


\begin{proof}
Take $\mathcal{O}$ as in the statement of the Lemma.  If $\mathcal{O}$ is sourceless, let $q$ be an arbitrary vertex.
Suppose that $\bar{q} \neq V(G)$ and there exists a potential edge pivot at a vertex on the boundary
of $\bar{q}^{c}$ which would bring an oriented edge from $G[\bar
{q}^{c}]$ into the cut pointing towards $\bar{q}^{c}$. Performing this
edge pivot would enlarge $\bar{q}$, therefore by induction on $|\bar
{q}^{c}|$, we assume that no such edge pivot is available. Because
every vertex in $\mathcal{O}$, with the possible exception of $q$, has
at least
one incoming edge, we conclude that the cut $(\bar{q},\bar{q}^{c})$
is saturated towards~$\bar{q}$. We can then reverse this cut and again
induct on $|\bar{q}^{c}|$.
\end{proof}

\begin{theorem}\label{qconexists}
A divisor $D$ with $\mathrm{deg}(D) \leq g-1$ is linearly equivalent to
divisor associated to a $q$-connected partial orientation if and only
if $r(D + (q)) \geq0$.
\end{theorem}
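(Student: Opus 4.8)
The plan is to handle the two implications separately, with the forward direction essentially immediate and the content concentrated in the converse.

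For the forward implication, suppose $D \sim D_{\O}$ with $\O$ a $q$-connected partial orientation. First I would observe that $q$-connectivity forces ${\rm indeg}_{\O}(v) \geq 1$ for every $v \neq q$, since a directed path from $q$ to $v$ terminates in an edge oriented towards $v$; hence $D_{\O}(v) = {\rm indeg}_{\O}(v) - 1 \geq 0$ for all $v \neq q$. As $D_{\O}(q) \geq -1$ always holds, it follows that $D_{\O} + (q) \geq \vec 0$ is effective, so $r(D + (q)) = r(D_{\O} + (q)) \geq 0$. No degree hypothesis is needed here, since every partially orientable divisor already has degree at most $g-1$.

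For the converse, assume $r(D + (q)) \geq 0$. The key idea is to produce a partial orientation equivalent to $D$ whose only possible source is $q$, and then invoke Lemma \ref{parqcon}. I would split on the degree. If ${\rm deg}(D) \leq g-2$, then ${\rm deg}(D + (q)) \leq g-1$, so Theorem \ref{eff1}(ii) applies to $D + (q)$ and yields a sourceless partial orientation $\O_0$ with $D_{\O_0} \sim D + (q)$. Since $\O_0$ is sourceless, $q$ carries at least one incoming edge; unorienting one such edge produces a partial orientation $\O_1$ with $D_{\O_1} = D_{\O_0} - (q) \sim D$. Crucially, every vertex $v \neq q$ retains all of its incoming edges, so $\O_1$ is either sourceless or has $q$ as its unique source. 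Lemma \ref{parqcon} then furnishes a $q$-connected $\O'$ equivalent to $\O_1$ in the generalized cocycle reversal system, and by Theorem \ref{generalizedcc} we obtain $D_{\O'} \sim D_{\O_1} \sim D$, as desired.

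The remaining case ${\rm deg}(D) = g-1$ is the boundary obstacle I expect to require the most care, since there ${\rm deg}(D+(q)) = g$ lies outside the range of Theorem \ref{eff1}. Here I would argue directly and independently of the rank hypothesis: by Corollary \ref{full}, $D$ is linearly equivalent to a full orientation $D_{\O}$, and by Lemma \ref{q-full} that orientation is equivalent in the cocycle reversal system to a $q$-connected full orientation $\O'$, giving $D_{\O'} \sim D$. This is consistent with the statement, because when ${\rm deg}(D) = g-1$ both sides of the claimed equivalence hold automatically: the construction just given always succeeds, while the forward direction shows that any $q$-connected representative forces $r(D+(q)) \geq 0$. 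The one point to verify carefully in the generic case is that unorienting a single incoming edge at $q$ changes the associated divisor by exactly $-(q)$ and introduces no new source away from $q$, so that the hypotheses of Lemma \ref{parqcon} are genuinely met.
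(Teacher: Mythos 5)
Your proposal is correct and follows essentially the same route as the paper: the forward direction via effectivity of $D_{\O}+(q)$, and the converse split into the degree $g-1$ case (Corollary \ref{full} plus Lemma \ref{q-full}) and the degree $<g-1$ case (Theorem \ref{eff1}(ii) applied to $D+(q)$, unorienting an incoming edge at $q$, then Lemma \ref{parqcon}). The extra care you take in checking that removing one incoming edge at $q$ creates no new source elsewhere is exactly the point the paper leaves implicit.
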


\begin{proof}
If $D \sim D_{\mathcal{O}}$ with $\mathcal{O}$ a $q$-connected
partial orientation,
then $D_{\mathcal{O}} + (q) \geq\vec{0}$, and $D +(q)\sim
D_{\mathcal{O}} + (q)$,
thus $r(D + (q)) \geq0$. Now suppose that $r(D + (q)) \geq0$. The
case of $\mathrm{deg} (D) = g-1$ has already been dealt with in \cite{an2013canonical}: By {Lemma~\ref{full}}, $D\sim D_{\mathcal{O}}$ for
$\mathcal{O}$ some
full orientation, and by {Lemma~\ref{q-full}}, $\mathcal{O}$ is
equivalent by
cut reversals to a $q$-connected orientation. Now, suppose that $\mathrm{deg}(D) < g-1$ so that $\mathrm{deg}(D +(q)) \leq g-1$. By {Theorem~\ref{eff1}} $(ii)$, we know that $D+(q) \sim D_{\mathcal{O}}$ for $\mathcal
{O}$ a sourceless
partial orientation. By removing an incoming edge at $q$, we obtain a
partial orientation $\mathcal{O}'$ with $D \sim D_{\mathcal{O}'}$ and
$\mathcal{O}'$ is either
sourceless or has a unique source at $q$. We now apply {Lemma~\ref{parqcon}}.
\end{proof}

We remark that the $q$-rooted spanning trees (also known as \textit
{arborescences}) are precisely the $q$-connected partial orientations
associated to the divisor $-(q)$. Additionally, the $q$-connected
partial orientations associated to $\vec{0}$ are the partial
orientations obtained from $q$-rooted spanning trees by orienting a new
edge towards $q$, i.e., they are the \textit{directed spanning unicycles}.

Any two $q$-connected full orientations which are equivalent in the
cycle--cocycle reversal system are equivalent in the cycle reversal
system, i.e., they have the same associated divisors. This result does
not extend to the setting of partial orientations, as the example in
{Fig.~\ref{q-connectednotunique2}} shows.

\begin{figure}
\includegraphics[scale=.6]{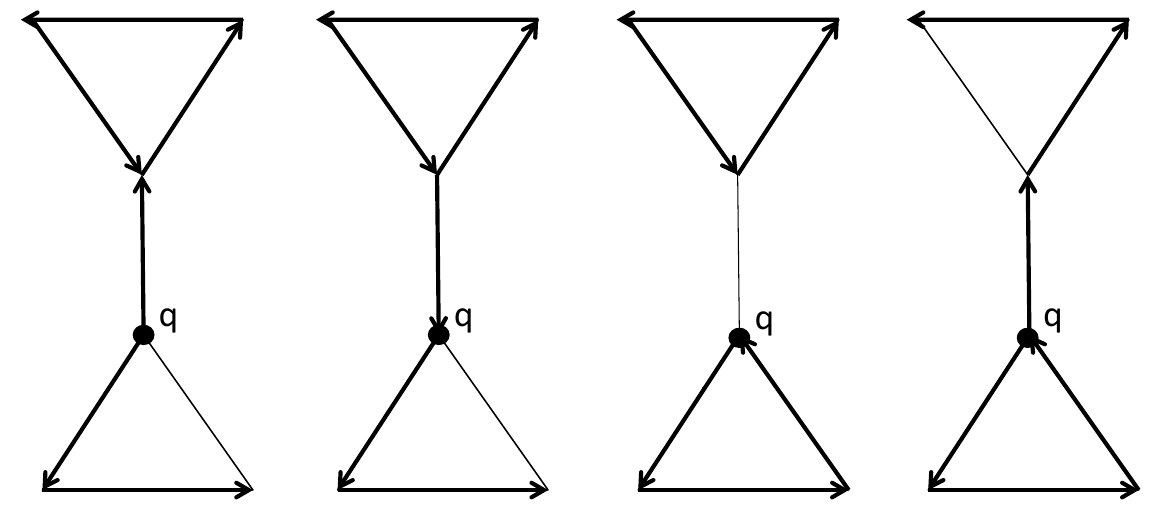}
\caption{A sequence of equivalent partial orientations. The left and right
partial orientations are both $q$-connected, but have different associated
divisors.} \label{q-connectednotunique2}
\end{figure}

The following theorem says that the Baker--Norine rank of a divisor
$D_{\mathcal{O}}$ associated to a partial orientation $\mathcal{O}$
is one less than
the minimum number of directed paths which need to be reversed in the
generalized cycle--cocycle reversal system to produce an acyclic partial
orientation. To make this statement precise, we introduce a helpful
auxiliary graph. Let $\mathcal{G}_{k}$ be a graph with vertex set
\begin{eqnarray*}V(\mathcal{G}_{k}) = \{ [\mathcal{O}]: \mathcal
{O} \mbox{ is  a
 partial  orientation  such that } \mathrm{deg}(D_{\mathcal
{O}}) =
k\}.
\end{eqnarray*}
Two vertices $[\mathcal{O}]$ and $[\mathcal{O}']$ are adjacent in
$\mathcal{G}_{k}$ if
there exist $\mathcal{O}_{1} \in[\mathcal{O}]$ and $\mathcal{O}_{2}
\in[\mathcal{O}']$ such that $\mathcal{O}
_{2}$ is obtained from $\mathcal{O}_{1}$ by reversing some directed
path. Let
\begin{eqnarray*}A = \{ [\mathcal{O}] \in V(\mathcal{G}_{k}): \exists
\mathcal{O}' \in
[\mathcal{O}] \mbox{  with } \mathcal{O}' \mbox{ acyclic}\}
\end{eqnarray*}
and let $d([\mathcal{O}],A)$ be the distance from $[\mathcal{O}]$ to
$A$ in $\mathcal{G}_{k}$.

\begin{theorem} \label{path}
Let $\mathcal{O}$ be a partial orientation with $\mathrm{deg}(D_{\mathcal
{O}}) = k$, then
$r(D_{\mathcal{O}}) = d([\mathcal{O}],A) -\nobreak 1$.
\end{theorem}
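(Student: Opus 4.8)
The plan is to prove the two inequalities $r(D_{\O}) \le d([\O],A)-1$ and $d([\O],A) \le r(D_{\O})+1$ separately, after first recording the identification that $[\O]\in A$ if and only if $r(D_{\O})=-1$. One direction is immediate from Corollary \ref{r=-1}: an acyclic representative of $[\O]$ forces $r(D_{\O})=-1$. For the converse, given $r(D_{\O})=-1$ I would run Algorithm \ref{construct} on the partially orientable divisor $D_{\O}$; its case $(ii)$ is impossible since $D_{\O}$ is partially orientable, so it returns an honest partial orientation linearly equivalent to $D_{\O}$. Applying the unfurling Algorithm \ref{unfurl} then yields a representative that is acyclic or sourceless, and the sourceless alternative is excluded by Theorem \ref{eff1}$(ii)$ because $r(D_{\O})=-1$. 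This leaves an acyclic representative of $[\O]$ (necessarily of degree $k$, since linear equivalence preserves degree), so $[\O]\in A$. In particular the base case $d([\O],A)=0$ of both inequalities is transparent.

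For $r(D_{\O}) \le d([\O],A)-1$, the point is that rank is a $1$-Lipschitz function on $\mathcal G_k$ that equals $-1$ exactly on $A$. Rank is well defined on vertices of $\mathcal G_k$ because it is a linear-equivalence invariant, and cycle or cocycle moves internal to a class preserve the class (Theorem \ref{generalizedcc}). A single directed-path reversal from $u$ to $v$ replaces $D_{\O}$ by a divisor linearly equivalent to $D_{\O}+(u)-(v)$, and the standard monotonicity and single-chip estimates give $|r(D+(u)-(v))-r(D)|\le 1$, so adjacent vertices of $\mathcal G_k$ have ranks differing by at most $1$. Traversing a shortest path from $[\O]$ to $A$ thus moves the rank from $r(D_{\O})$ to $-1$ in $d([\O],A)$ steps, whence $r(D_{\O})+1 \le d([\O],A)$.

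The substance is the reverse inequality, which I would prove by induction on $r:=r(D_{\O})$, the base case $r=-1$ being the identification above. For the inductive step with $r\ge 0$, the goal is to find a single path reversal dropping the rank to exactly $r-1$. By Lemma \ref{formula} choose $\nu\in\mathcal N_k$ with ${\rm deg}^+(D_{\O}-\nu)=r+1$, and write $D_{\O}-\nu=E_1-E_2$ with $E_1,E_2\ge \vec 0$ of disjoint support; since $D_{\O}$ and $\nu$ both have degree $k$, both $E_1$ and $E_2$ have degree $r+1\ge 1$, hence are nonempty. Pick $v\in{\rm supp}(E_1)$ and $u\in{\rm supp}(E_2)$, so automatically $u\neq v$. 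Then $D_{\O}+(u)-(v)-\nu=(E_1-(v))-(E_2-(u))$ is a disjoint-support difference of effective divisors whose positive part has degree $r$, so Lemma \ref{formula} forces $r(D_{\O}+(u)-(v))\le r-1$, while $D_{\O}+(u)-(v)\ge D_{\O}-(v)$ and monotonicity give $r(D_{\O}+(u)-(v))\ge r-1$; hence equality. To realize $(u)-(v)$ as an actual path reversal inside $[\O]$, note that $r(D_{\O}+(u))\ge r(D_{\O})\ge 0$, so Theorem \ref{qconexists} provides a representative $\tilde\O\in[\O]$ that is $u$-connected; this $\tilde\O$ contains a directed path from $u$ to $v$, and reversing it produces $\O_1\in V(\mathcal G_k)$ adjacent to $[\O]$ with $D_{\O_1}\sim D_{\O}+(u)-(v)$ and $r(D_{\O_1})=r-1$. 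The induction hypothesis gives $d([\O_1],A)\le r$, so $d([\O],A)\le r+1$, completing the argument and the proof that $r(D_{\O})=d([\O],A)-1$.

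The main obstacle is precisely the coordination step at the end of the induction, where the two halves of the theory must be reconciled. The rank computation (via Lemma \ref{formula}) dictates exactly which chip-dipole $(u)-(v)$ must be moved in order to decrement the rank by one, but a directed-path reversal can only transport a chip along a path that actually exists in some representative, so a priori the rank-theoretic prescription and the orientation-theoretic feasibility need not be compatible. The clean resolution is that the hypothesis $r(D_{\O})\ge 0$ makes $D_{\O}$ simultaneously equivalent to a $u$-connected partial orientation for \emph{every} candidate source $u$, so the feasibility constraint never binds and any prescribed dipole can be realized.
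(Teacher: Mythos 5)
Your proof is correct and takes essentially the same route as the paper: both directions rest on Lemma \ref{formula}, on Theorem \ref{qconexists} (to realize each dipole $(u)-(v)$ as a directed path reversal inside a $u$-connected representative), and on the identification of $A$ with the rank-$(-1)$ classes via Theorem \ref{eff1} and Corollary \ref{r=-1}. The differences are only presentational: you peel off one dipole per inductive step and verify the rank drops by exactly one, and you use the $1$-Lipschitz property of rank for the inequality $r(D_{\O})+1\le d([\O],A)$, whereas the paper reverses all $r(D_{\O})+1$ paths at once to land on $\nu\in\mathcal{N}_k$ and obtains the reverse inequality by applying Lemma \ref{formula} directly with $\nu$ taken to be the divisor of the acyclic endpoint of the walk.
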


\begin{proof}
By {Theorem~\ref{eff1}} $(i)$, $r(D_{\mathcal{O}}) = -1$ if and only if
$[\mathcal{O}]
\in A$, i.e., $d([\mathcal{O}],A) = 0$, thus we assume that
$r(D_{\mathcal{O}}) \geq
0$. Let $d$ be the distance from $[\mathcal{O}]$ to $A$ in
$\mathcal
{G}_{k}$. We will first show that $d-1 \leq r(D_{\mathcal{O}})$. Let
$f_{D_{\mathcal{O}
}} = D_{\mathcal{O}}-\nu$ for $\nu\in{\mathcal{N}}_{k}$ which
achieves the
minimum value of $ \mathrm{deg}^{+}(D_{\mathcal{O}}-\nu)-1$. Recall,
{Lemma~\ref{formula}} states that $r(D_{\mathcal{O}}) = \mathrm{deg}^{+}(D_{\mathcal
{O}}-\nu)-1$.
Because $\mathrm{deg}(f_{D_{\mathcal{O}}}) = 0$, we can write
\begin{eqnarray*}f_{D_{\mathcal{O}}} = \sum_{i=0}^{r(D_{\mathcal
{O}})} (p_{i})-(q_{i}).
\end{eqnarray*}

By {Theorem~\ref{qconexists}} there exists a partial orientation
$\mathcal{O}'$
which is $q_{0}$-connected and $\mathcal{O}' \sim\nobreak \mathcal{O}$. We
can reverse a path
from $q_{0}$ to $p_{0}$ to obtain $\mathcal{O}''$ with $D_{\mathcal
{O}''} = D_{\mathcal{O}'}
+(q_{0}) -(p_{0})$. Proceeding in this way, we arrive at an orientation
$\mathcal{O}'''$ with $D_{\mathcal{O}'''} \sim D - f_{D_{\mathcal
{O}}}$. Therefore, $r(D_{\mathcal{O}
'''}) =-1$ and by {Lemma~\ref{eff1}}, $\mathcal{O}'''$ is equivalent in the
generalized cocycle reversal system to an acyclic partial orientation.
This sequence of partial orientations corresponds to a walk from
$[\mathcal{O}
]$ to $A$ in $V(\mathcal{G}_{k})$ of length $r(D_{\mathcal{O}})+1$, therefore
$d-1 \leq r(D_{\mathcal{O}})$.

Conversely, suppose that we have a sequence of partial orientations
\begin{eqnarray*}\mathcal{O}= \mathcal{O}_{0}, \mathcal{O}_{0}',
\mathcal{O}_{1}, \mathcal{O}_{1}', \dots, \mathcal{O}
_{d}, \mathcal{O}_{d}'
\end{eqnarray*}
where $\mathcal{O}_{i} \sim\mathcal{O}_{i}'$, $\mathcal{O}_{i+1}$
is obtained from $\mathcal{O}_{i}'$
by reversing a directed path from $q_{i}$ to $p_{i}$, and $\mathcal
{O}_{d}'$ is
acyclic. This gives a walk of length $d$ from $[\mathcal{O}]$ to $A$ in
$\mathcal{G}_{k}$. Then $D_{\mathcal{O}} \sim D_{\mathcal{O}_{d}'} +
\sum_{i}^{d}
(p_{i}) - (q_{i})$. It follows that $D_{\mathcal{O}} - \sum_{0}^{d}
(p_{i}) -
(q_{i}) = \nu\in\mathcal{N}_{k}$ and $r(D_{\mathcal{O}}) \leq
\mathrm{deg}^{+}(D_{\mathcal{O}}-\nu)-1 = \mathrm{deg}^{+}(\sum_{0}^{d} (p_{i}) -
(q_{i}))-1 = d-1$.
\end{proof}

{Theorem~\ref{path}} holds in the generalized cocycle reversal system as
well, i.e. where cycle reversals are forbidden;
this follows
from {Corollary~\ref{cocycle}}.  See {Fig.~\ref{pathreversalexample}} for an example of how Theorem \ref{path} can be applied.

\begin{figure}
\includegraphics[scale=.7]{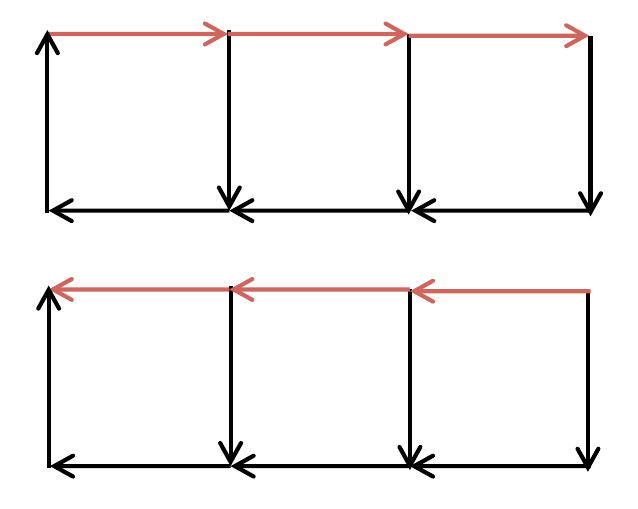}
\caption{A directed path whose reversal produces an acyclic orientation. By
{Theorem~\ref{path}} it follows that the divisor associated to the top
orientation has rank 0.}
\label{pathreversalexample}
\vspace{-6pt}
\end{figure}

One can also describe the Baker--Norine rank of a divisor associated to
a partial orientation as one less than the minimum number of edges
which need to be unoriented in the generalized (cycle--)cocycle reversal
system to produce an acyclic partial orientation. This characterization
follows easily from {Theorem~\ref{eff1}} combined with Baker and
Norine's original description of rank since unorienting an edge in
$\mathcal{O}
$ corresponds to subtracting a chip from $D_{\mathcal{O}}$. A minimum
collection of edges which need to be deleted from an orientation to
destroy all directed cycles is called a min arc feedback set and has
been investigated extensively in the literature. It follows that the
size of a min arc feedback set is a trivial upper bound for the rank of
the divisor associated to a partial orientation. Reed, Robertson,
Seymour, and Thomas \cite{reed1996packing} proved a difficult
Erd\H{o}s--Posa type result, which states that there exists some
function $f:\mathbb{N} \rightarrow\mathbb{N}$ such that
any digraph has either
$k$ edge disjoint directed cycles or there exists a min arc feedback
set of size at most $f(k)$. As an immediate corollary of their work we
have that there exists some function $g: \mathbb{N} \rightarrow
\mathbb{N}$ such that any partial orientation $\mathcal{O}$ either
has $k$
edge disjoint directed cycles or $r(D_{\mathcal{O}}) \leq g(k)$. The author
believes it would be extremely interesting if one were able to apply
ideas from this paper to produce an alternate proof of the Reed,
Robertson, Seymour, and Thomas result. Also see
Perrot and Van Pham \cite{perrot2015feedback}
or Kiss and T\'{o}thm\'{e}r\'{e}sz
\cite{kiss2015chip} where they utilize min arc feedback
sets for investigating complexity questions related to
chip-firing.\vspace{-2pt}

\begin{corollary}[Strong RR2]
If $\mathrm{deg}(D) = g - 1$ then $r(D) = r(K-D)$.
\end{corollary}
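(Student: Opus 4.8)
The plan is to combine the rank formula of Lemma \ref{formula} with a reflection symmetry of the set $\mathcal{N}$ of non-special divisors under the involution $\nu\mapsto K-\nu$. First I record the degree bookkeeping: since ${\rm deg}(K)=\sum_{v}({\rm deg}(v)-2)=2|E(G)|-2|V(G)|=2(g-1)$, the hypothesis ${\rm deg}(D)=g-1$ forces ${\rm deg}(K-D)=g-1$, so both $D$ and $K-D$ have degree $k=g-1$ and Lemma \ref{formula} applies to each with $\mathcal{N}_{g-1}=\mathcal{N}$. This gives $r(D)=\min_{\nu\in\mathcal{N}}{\rm deg}^{+}(D-\nu)-1$ and $r(K-D)=\min_{\nu\in\mathcal{N}}{\rm deg}^{+}(K-D-\nu)-1$, reducing the whole statement to a symmetry of $\mathcal{N}$ plus a degree-zero bookkeeping trick.

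The crux is the claim that $\nu\mapsto K-\nu$ maps $\mathcal{N}$ bijectively to itself. To see this, take $\nu\in\mathcal{N}$, so ${\rm deg}(\nu)=g-1$ and $r(\nu)=-1$. By Theorem \ref{eff1}$(i)$ we have $\nu\sim\nu'\le D_{\mathcal{O}}$ for some acyclic partial orientation $\mathcal{O}$. Since linear equivalence preserves degree, ${\rm deg}(\nu')=g-1$, while $\nu'\le D_{\mathcal{O}}$ gives $g-1={\rm deg}(\nu')\le {\rm deg}(D_{\mathcal{O}})\le g-1$; hence $\nu'=D_{\mathcal{O}}$ and $\mathcal{O}$ is forced to be a \emph{full} orientation. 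Let $\mathcal{O}^{-}$ be obtained by reversing every edge of $\mathcal{O}$. For a full orientation one computes $(K-D_{\mathcal{O}})(v)={\rm deg}(v)-1-{\rm indeg}_{\mathcal{O}}(v)={\rm outdeg}_{\mathcal{O}}(v)-1={\rm indeg}_{\mathcal{O}^{-}}(v)-1$, so $K-D_{\mathcal{O}}=D_{\mathcal{O}^{-}}$; moreover $\mathcal{O}^{-}$ is again acyclic since a directed cycle in $\mathcal{O}^{-}$ would reverse to one in $\mathcal{O}$. Thus $K-\nu\sim D_{\mathcal{O}^{-}}$ with $\mathcal{O}^{-}$ an acyclic full orientation, so $r(K-\nu)=-1$ by Corollary \ref{r=-1} and ${\rm deg}(K-\nu)=g-1$, whence $K-\nu\in\mathcal{N}$. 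As $\nu\mapsto K-\nu$ is its own inverse, it is a bijection of $\mathcal{N}$.

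I then finish by reindexing. Substituting $\nu=K-\mu$ in the formula for $r(K-D)$ (legitimate since $\mu$ ranges over all of $\mathcal{N}$ by the bijection) gives $K-D-\nu=\mu-D=-(D-\mu)$, hence ${\rm deg}^{+}(K-D-\nu)={\rm deg}^{-}(D-\mu)$. The decisive point is that ${\rm deg}(D-\mu)=0$ because $D$ and $\mu$ both have degree $g-1$; for a degree-zero divisor the positive and negative parts have equal degree, so ${\rm deg}^{-}(D-\mu)={\rm deg}^{+}(D-\mu)$. Therefore $r(K-D)=\min_{\mu\in\mathcal{N}}{\rm deg}^{+}(D-\mu)-1=r(D)$.

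I expect the main obstacle to be the characterization step in the middle paragraph: upgrading the acyclic partial orientation supplied by Theorem \ref{eff1}$(i)$ to an acyclic \emph{full} orientation, where the degree count is exactly what collapses $\nu'\le D_{\mathcal{O}}$ to $\nu'=D_{\mathcal{O}}$ and lets global reversal act cleanly. An alternative route, bypassing Lemma \ref{formula}, is to observe that global edge reversal induces a distance-preserving automorphism of the auxiliary graph $\mathcal{G}_{g-1}$ fixing the acyclic classes $A$, so that $r(D_{\mathcal{O}})=d([\mathcal{O}],A)-1=d([\mathcal{O}^{-}],A)-1=r(K-D_{\mathcal{O}})$ by Theorem \ref{path}; there the work shifts to verifying that reversal carries cut, cycle, and directed-path reversals to reversals of the same type.
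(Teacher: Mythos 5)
Your proof is correct, but your primary route is genuinely different from the paper's. The paper's proof is two lines: by Corollary \ref{full} write $D \sim D_{\O}$ for a full orientation $\O$, observe $K - D_{\O} = D_{\bar{\O}}$ for the globally reversed orientation $\bar{\O}$, and invoke Theorem \ref{path} together with the remark that global reversal carries cut reversals, cycle reversals, edge pivots, and directed path reversals to operations of the same type, so it is a distance-preserving symmetry of $\mathcal{G}_{g-1}$ fixing the acyclic classes $A$ --- this is exactly the ``alternative route'' you sketch in your last paragraph. Your main argument instead establishes the Baker--Norine-style symmetry of $\mathcal{N}$ under $\nu \mapsto K - \nu$ (your upgrade from acyclic partial to acyclic \emph{full} orientation via the degree count $g-1 = \deg(\nu') \le \deg(D_{\O}) \le g-1$ is the right observation, and Corollary \ref{r=-1} then gives $r(K-\nu) = -1$), and feeds it through Lemma \ref{formula} with the reindexing $\nu = K - \mu$ and the identity $\deg^{+}(D-\mu) = \deg^{-}(D-\mu)$ for the degree-zero divisor $D - \mu$; all of these steps check out. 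What each buys: the paper's argument is shorter and stays entirely inside the orientation formalism, but leans on an informally justified ``mirror operations'' claim; yours is more computational but every step is reduced to results already proved in full, and it isolates the self-duality of $\mathcal{N}$ as a reusable statement (it is essentially the engine of Baker and Norine's original proof of RR2).
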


\begin{proof}
If $D$ is equivalent to an orientable divisor $D_{\mathcal{O}}$ then
$K-D$ is
equivalent to $K-D_{\mathcal{O}} = D_{\bar{\mathcal{O}}}$, where
${\bar{\mathcal{O}}}$ is the
orientation obtained from $\mathcal{O}$ by reversing the orientation
of every
edge. It is clear by {Theorem~\ref{path}} that $r(D_{\mathcal{O}}) =
r(D_{\bar
{\mathcal{O}}})$ since we may perform mirror operations on the two
orientations.\vspace{-2pt}
\end{proof}

\begin{theorem}[Baker--Norine \cite{baker2007riemann}] For every
divisor $D$ on $G$,\nopagebreak
\begin{eqnarray*}
r(D)-r(K-D) = \mathrm{deg}(D) -g+1.
\end{eqnarray*}
\end{theorem}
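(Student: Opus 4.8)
The plan is to prove the Riemann--Roch formula for all divisors simultaneously by upgrading Lemma~\ref{formula} to a formula whose minimizing set is the \emph{single} symmetric set $\mathcal{N}=\mathcal{N}_{g-1}$ of non-special divisors, and then exploiting the involution $\nu\mapsto K-\nu$ supplied by Strong RR2. Concretely, I will first establish the master formula
$$r(D)=-1+\min_{\nu\in\mathcal{N}}{\rm deg}^+(D-\nu)$$
valid for every $D$ (not merely ${\rm deg}(D)\le g$), and then read off Riemann--Roch from a one-line substitution. Throughout I use ${\rm deg}(K)=2(g-1)$, so that $\nu\in\mathcal{N}$ forces $K-\nu\in\mathcal{N}$ by Strong RR2, making $\mathcal{N}$ stable under $\nu\mapsto K-\nu$.

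For the master formula, the inequality $\le$ is the easy half: for any $\nu\in\mathcal{N}$ write $D-\nu=P-Q$ with $P,Q\ge\vec0$ of disjoint support, so that $D-P=\nu-Q\le\nu$ and hence $r(D-P)\le r(\nu)=-1$ by monotonicity of rank; removing the ${\rm deg}(P)={\rm deg}^+(D-\nu)$ chips of $P$ therefore kills the rank, giving $r(D)\le{\rm deg}^+(D-\nu)-1$. For the reverse inequality I must produce one $\nu\in\mathcal{N}$ attaining the bound. Choose $E\ge\vec0$ with ${\rm deg}(E)=r(D)+1$ and $r(D-E)=-1$ (such $E$ exists by the definition of rank). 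Then ${\rm deg}(D-E)\le g-1$: this holds automatically when ${\rm deg}(D)\le g-1$, and when ${\rm deg}(D)\ge g$ it follows from Riemann's inequality $r(D)\ge{\rm deg}(D)-g$. Applying Corollary~\ref{RR1a} to $D-E$ with target degree $g-1$ yields $\nu\in\mathcal{N}$ with $\nu\ge D-E$, whence $D-\nu\le E$ and ${\rm deg}^+(D-\nu)\le{\rm deg}(E)=r(D)+1$, closing the gap. (For ${\rm deg}(D)\le g$ one may instead feed the minimizer of Lemma~\ref{formula} directly into Corollary~\ref{RR1a}.) The needed Riemann inequality is itself available: every divisor of degree $g$ is linearly equivalent to an effective divisor, by passing to a $q$-connected orientation exactly as in the proof of Lemma~\ref{RR1} (Corollary~\ref{full} together with Lemma~\ref{q-full}), and removing ${\rm deg}(D)-g$ chips from a divisor of degree $\ge g$ leaves a divisor of degree $g$.

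With the master formula in hand, Riemann--Roch drops out. Applying the master formula to $K-D$ and substituting $\mu=K-\nu$, which runs over $\mathcal{N}$ as $\nu$ does, and using $(K-D)-(K-\nu)=\nu-D$ together with ${\rm deg}^-(D-\nu)={\rm deg}^+(D-\nu)-{\rm deg}(D-\nu)$ and ${\rm deg}(D-\nu)={\rm deg}(D)-(g-1)$ (a constant over $\mathcal{N}$), I compute
$$r(K-D)+1=\min_{\nu\in\mathcal{N}}{\rm deg}^+\big(\nu-D\big)=\min_{\nu\in\mathcal{N}}{\rm deg}^-(D-\nu)=\Big(\min_{\nu\in\mathcal{N}}{\rm deg}^+(D-\nu)\Big)-\big({\rm deg}(D)-g+1\big).$$
The right-hand side equals $\big(r(D)+1\big)-\big({\rm deg}(D)-g+1\big)$, and rearranging gives $r(D)-r(K-D)={\rm deg}(D)-g+1$.

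I expect the main obstacle to be the existence (``$\ge$'') direction of the master formula carried out \emph{uniformly in the degree of $D$}: the crux is the degree bookkeeping that lets Corollary~\ref{RR1a} apply, namely guaranteeing ${\rm deg}(D-E)\le g-1$, which for high-degree $D$ is exactly Riemann's inequality and must be extracted from Corollary~\ref{full} and Lemma~\ref{q-full}. Once the formula is anchored over the fixed symmetric set $\mathcal{N}_{g-1}$ rather than the degree-matched sets $\mathcal{N}_k$, the symmetry of Strong RR2 and the constancy of ${\rm deg}(D-\nu)$ on $\mathcal{N}$ do all the remaining work.
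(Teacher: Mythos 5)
Your proof is correct, but it takes a genuinely different route from the paper's. The paper first reduces (without loss of generality) to $\deg(D)\le g-1$, invokes Strong RR1 to find $D'=D+E$ of degree $g-1$ with $r(D')=r(D)$, applies Strong RR2 to $D'$, and then transfers the symmetry back down to $D$ by a comparison-and-contradiction argument using monotonicity of rank. You instead upgrade Lemma \ref{formula} to the single formula $r(D)=-1+\min_{\nu\in\mathcal{N}}\deg^+(D-\nu)$ over the fixed set $\mathcal{N}=\mathcal{N}_{g-1}$, valid for all degrees, and then obtain Riemann--Roch in one line from the involution $\nu\mapsto K-\nu$ (which preserves $\mathcal{N}$ by Strong RR2) together with $\deg^-=\deg^+-\deg$. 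This is essentially Baker and Norine's original reduction via their amalgamated ``RR'' statement, whereas the paper deliberately replaces it with the Strong RR1/Strong RR2 variant. Your route requires one extra ingredient the paper's final proof does not: Riemann's inequality $r(D)\ge\deg(D)-g$ for $\deg(D)\ge g$, needed to guarantee $\deg(D-E)\le g-1$ before applying Corollary \ref{RR1a}; you correctly extract it from Corollary \ref{full} and Lemma \ref{q-full}, exactly as in the first paragraph of the paper's proof of Lemma \ref{RR1}, so nothing is missing. What your approach buys is a standalone rank formula valid for every divisor and a completely transparent $D\leftrightarrow K-D$ symmetry; what the paper's approach buys is that it isolates Strong RR1 as a statement of independent interest and never needs the rank formula above degree $g$.
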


\begin{proof}
Either $D$ or $K-D$ has degree at most $g-1$, therefore without loss of
generality, we take $D$ to be a divisor with $\mathrm{deg}(D)\leq g-1$. By
Strong RR1, there exits $E\geq0$ such that $D+E = D'$ with $r(D') =
r(D)$ and $\mathrm{deg}(D')=g-1$. By Strong RR2 we know that $r(D') =
r(K-D')$. To prove the theorem, it suffices to show that
\begin{eqnarray*}r(K-D) - r(K-D') = \mathrm{deg}(K-D) - g+1 = \mathrm{deg}(E).
\end{eqnarray*}
Because $K-D \geq K-D'$, and $\mathrm{deg} (K-D') = g-1$ we know that
\begin{eqnarray*}r(K-D) - r(K-D') \leq\mathrm{deg}(K-D) - g+1 = \mathrm{deg}(E),
\end{eqnarray*}
and for the sake of contradiction, we suppose that
\begin{eqnarray*}r(K-D) - r(K-D') < \mathrm{deg}(E).
\end{eqnarray*}

Let $E'$ be an effective divisors such that $r(K-D-E') = -1$ and
\begin{eqnarray*}\mathrm{deg}(E') = r(K-D)-r(K-D-E')= r(K-D)+1.
\end{eqnarray*}
By RR1, we know that $\mathrm{deg}(K-D-E') \leq g-1$. For an effective
divisor $E''\leq E'$ such that $\mathrm{deg}(K-D-E'') = g-1$, we have that
$\mathrm{deg}(E'') = r(K-D)-r(K-D-E'')$. Note that $\mathrm{deg}(E'') = \mathrm{deg}(E)$, which implies that $r(K-D-E'') < r(K-D')$.

Let $D''$ be the divisor such that $K-D'' = K-D-E''$. We have $D \leq D
+E'' = D''$ so that $r(D) \leq r(D'')$, but
\begin{eqnarray*}r(D'') = r(K-D'') < r(K-D') = r(D') = r(D),
\end{eqnarray*}
a contradiction, thus proving the theorem.
\end{proof}

For a comparison with other proofs of the Riemann--Roch formula for
graphs which appear in the literature, we refer the reader to \cite{amini2012linear,amini2010riemann,baker2007riemann,cori2013riemann,manjunath2012monomials,wilmes2010algebraic}.

\section{Luo's theorem on rank-determining sets}

In this section, we give an alternate proof of Luo's purely topological
characterization of rank-determining sets for metric graphs. Our proof
is based on considerations of acyclic orientations of metric graphs and
directed path reversals. We begin by introducing the necessary notation
and terminology for discussing divisors on metric graphs.

Let $G$ be a connected graph and $w: E(G) \rightarrow\mathbb{R}^{+}$,
a weight function. The \textit{metric graph} $\Gamma$ associated to
$(G,w)$ is the compact connected metric space obtained from $(G,w)$ by
viewing each edge $e$ as isometric to an interval of length $w(e)$. The
\textit{vertices} of $\Gamma$ are the points in $\Gamma$
corresponding to vertices of $G$. We take an \textit{orientation of
$\Gamma$} to be an orientation of the tangent space of $\Gamma$ such
that for any $p \in\Gamma$ and any tangent direction $\tau$ for $p$,
there exists some path emanating from $p$ along $\tau$ of nonzero
length such that the orientation does not change direction. Fix an
orientation $\mathcal{O}(\Gamma)$ arbitrarily. A divisor on $\Gamma$
is a
formal sum of points with integer coefficients and finite support.
Given a piecewise linear function $f$ on $\Gamma$ with integer slopes,
we define $Q(f)$ to be the sum of the incoming slopes minus the
outgoing slopes according to $\mathcal{O}(\Gamma)$, i.e. the
Laplacian applied
to $f$. We say that a divisor $D$ is a \textit{principal divisor} if
it is of the form $Q(f)$, and we say that two divisors are linearly
equivalent if their difference is a principal divisor. We remark that
if all of the edges in $\Gamma$ have length 1, we require all $f$ be smooth in the
interiors of edges, and divisors be supported at vertices, then we
recover the definition of linear equivalence previously given for
discrete graphs using chip-firing.

The definitions of rank, genus, degree, and canonical divisor extend
readily to metric graphs. The Riemann--Roch theorem for metric graphs
was proven independently by Gathmann and Kerber \cite{gathmann2008riemann}, and Mikhalkin and Zharkov \cite{mikhalkin465tropical}. When investigating linear equivalence of
divisors on tropical curves one may forget both the embedding of the
curve and the unbounded rays, thus reducing to the study of metric graphs.

Hladk\'{y}, Kr\'{a}l, and Norine\cite{hladky2013rank} proved that when
computing the rank of a divisor on a metric graph, one need only
consider subtracting chips from the vertices of $\Gamma$, and they
used this result to demonstrate that the rank of a divisor can be
computed in finite time. Luo\cite{luo2011rank} generalized this idea
by defining a set of points $A$ to be \textit{rank-determining} for a
metric graph $\Gamma$ if when computing the rank of any divisor on
$\Gamma$, we only need to subtract chips from points in $A$. A \textit
{special open set} $\mathcal{U}$ is a nonempty, connected, open subset of
$\Gamma$ such that every connected component $X$ of $\Gamma\setminus
\mathcal{U}$ has a boundary point $p$ with $\mathrm{outdeg}_{X}(p)\geq
2$. Luo
introduced a metric version of Dhar's burning algorithm and applied
this technique to obtain the following beautiful {Theorem~\ref{Luo}},
which we now reprove.

Before presenting the proof, we first note a motivating special case:
given an acyclic orientation $\mathcal{O}$ of a metric graph and an
edge $e$ in
which the orientation changes direction, we can perform a directed path
reversal inside of $e$ so that the edge is now oriented towards one of
the two incident vertices without creating a directed cycle. This
follows by a similar argument to the one which was used in our proof of
RR1 for showing that any acyclic partial orientation may be extended
greedily to a acyclic full orientation. By {Lemmas~\ref{Luonegrank} and \ref{Luodegg-1}}, this observation may be converted into a proof that
the vertices of $\Gamma$ are rank-determining, which is \cite[Theorem
3]{hladky2013rank} and \cite[Theorem 1.5]{luo2011rank}. See {Fig.~\ref{VGisrankdet3}}.

\begin{figure}
\includegraphics[scale=.4]{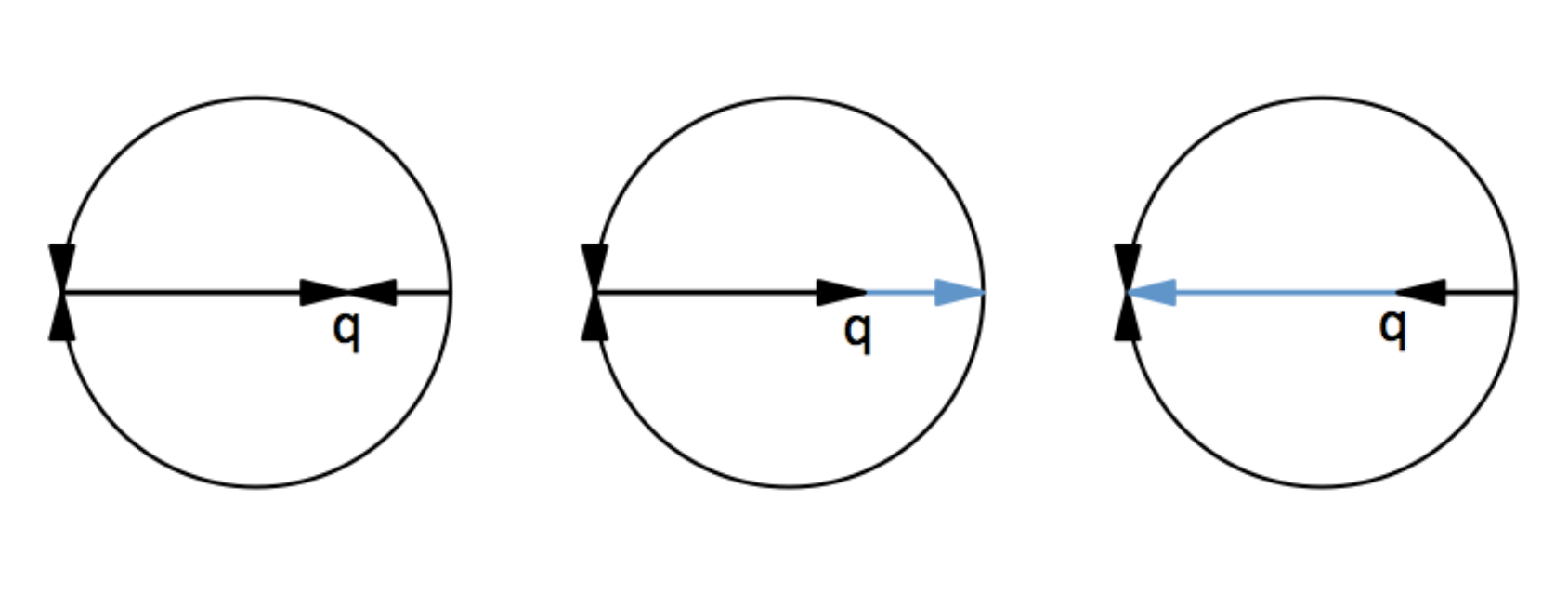}
\caption{A full orientation of a metric graph such that the orientation of the
middle edge changes direction at a point $q$, and two other orientations
obtained by reversing directed paths with one endpoint $q$ and the other
endpoint a vertex. The path reversal in the second orientation causes directed
cycles to appear while the path reversal in the third orientation does not.}
\label{VGisrankdet3}
\vspace{-2pt}
\end{figure}

\begin{lemma}\label{Luonegrank}
A finite subset $A \subset\Gamma$ is rank-determining if and only if
for any divisor $D$ with $r(D)=-1$, and any point $q \in\Gamma$,
there exists a point $a \in A$ such that $r(D+(q)-(a) )=-1$.
\end{lemma}

\begin{proof}
Suppose $A$ is such that for any $D$ with $r(D)=-1$, and any point $q
\in\Gamma$, there exists a point $a \in A$ such that $r(D+(q)-(a)
)=-1$. Let $D'$ be a divisor, and $E$ an effective divisor such that
$r(D'-E) = -1$ and $\mathrm{deg}(E)=r(D')+1$. Let $q \in\mathrm{supp}(E)$.
By assumption, there exists some $a \in A$ such that $r(D-(E - (q)
+(a)) )=-1$. By induction on $\mathrm{deg} (E|_{\Gamma\setminus A})$,
there exists a divisor $E'$ supported on $A$, with $\mathrm{deg}(E) = \mathrm{deg}(E')$ and $r(D-E') = -1$, thus $A$ is rank-determining.

Conversely, suppose that $A$ is rank-determining. Let $D$ be a divisor
with $r(D) = -1$ and $q \in\Gamma$. We known that $r(D+(q)) \leq0$,
therefore there exists some $a \in A$ such that $r(D+(q)-(a)) =-1$.
\end{proof}

\begin{lemma}\label{Luodegg-1}
A finite subset $A \subset\Gamma$ is rank-determining if and only if
for every $\nu\in\mathcal{N}$ and every $q \in\Gamma$, there
exists some $a \in A$ such that $\nu+(q) -(a) \in\mathcal{N}$.
\end{lemma}

\begin{proof}
If $A$ is rank-determining, then {Lemma~\ref{Luonegrank}} says that for
any divisor $D$ with $r(D)=-1$, and any point $q \in\Gamma$, there
exists a point $a \in A$ such that $r(D+(q)-(a) )=-1$. Hence this is
certainly remains true if we restrict $D$ to lie in $\mathcal{N}$.

We now verify the converse. Suppose that $A$ is such that for every
$\nu\in\mathcal{N}$ and every $q \in\Gamma$, there exists some $a
\in A$ such that $\nu+(q) -(a) \in\mathcal{N}$. To verify that $A$
is rank determining it suffices by {Lemma~\ref{Luonegrank}} to prove
that for any divisor $D$ with $r(D)=-1$, and any point $q \in\Gamma$,
there exists a point $a \in A$ such that $r(D+(q)-(a) )=-1$.

By the metric version of RR1, e.g. Mikhalkin and Zharkov \cite[Theorem
7.10]{mikhalkin465tropical}, if a divisor has degree at least $g$, it
has nonnegative rank. Additionally if $D$ is a divisor such that $\mathrm{deg}(D) \leq g-1$ and $r(D)=-1$, then there exists some $\nu\in\mathcal
{N}$ such that $D\leq\nu$. If for every $q \in\Gamma$, there
exists some $a \in A$ such that $r(\nu+(q) -(a))=-1$, then the same
holds for $D$.
\end{proof}

\begin{theorem}[Luo \cite{luo2011rank}, Theorem 3.16] \label{Luo}
A finite subset $A \subset\Gamma$ is rank-determining if and only if
it intersects every special open set $\mathcal{U}$ in $\Gamma$.
\end{theorem}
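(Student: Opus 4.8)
The plan is to reduce the biconditional to a single ``one chip at a time'' criterion and then to extract that criterion from the orientation theory of the previous sections, transported to the metric setting. For a divisor $D$ with $r(D)\geq 0$, write $G(D) = \{ p \in \Gamma : r(D-(p)) = r(D)-1\}$ for the set of \emph{rank-reducing points}, and write $r_A(D)$ for the rank computed by subtracting chips only from $A$. The first step is the reduction alluded to just before the statement: $A$ is rank-determining if and only if $A \cap G(D) \neq \emptyset$ for every divisor $D$ with $r(D)\geq 0$. Indeed, if every such $G(D)$ meets $A$ then, choosing $a \in A\cap G(D)$ and inducting on $r(D)$ via $r_A(D-(a)) = r(D-(a)) = r(D)-1$, one peels chips off $A$ one at a time and recovers $r(D)$ using only $A$; conversely, if some $G(D)$ is disjoint from $A$ then removing any single chip of $A$ fails to lower the rank, so no effective divisor supported on $A$ can witness $r(D)$. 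With this reduction the theorem splits into two containment statements relating $G(D)$ to special open sets.

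\emph{Sufficiency.} Assuming $A$ meets every special open set, it suffices to show that for every $D$ with $r(D)\geq 0$ the set $G(D)$ \emph{contains} a special open set. I would argue via the metric analogues of Theorem \ref{eff1} and Theorem \ref{path}. Since $r(D)\geq 0$, write $D \sim D_{\O}$ with $\O$ a sourceless metric partial orientation; by the metric form of Theorem \ref{path} the path-reversal distance from $[\O]$ to an acyclic class equals $r(D)+1$. A rank-reducing point is produced by the first step of a geodesic: reversing a directed path from $q$ to $p$ yields $D_{\O}+(q)-(p)$ of rank $r(D)-1$, and since $D_{\O}-(p)\leq D_{\O}+(q)-(p)$ we get $r(D-(p))\leq r(D)-1$, hence $p\in G(D)$. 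The crux is that these admissible endpoints sweep out an entire special open set: running an oriented, metric version of Dhar's burning algorithm (Algorithm \ref{orientedDhar}) produces an open connected unburnt region $\U$ such that each component $X$ of its complement is saturated toward $\U$ and has a boundary point receiving at least two edges into $\U$ — this double edge is exactly the obstruction that stalls the burning — so $\U$ is special, and every interior point of $\U$ is the terminus of a cycle-breaking path reversal and therefore lies in $G(D)$. Then $A$ meets $\U\subseteq G(D)$.

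\emph{Necessity.} Conversely I would show that every special open set $\U$ satisfies $G(D)\subseteq \U$ for some $D$ with $r(D)\geq 0$, whence $A\cap\U=\emptyset$ gives $A\cap G(D)=\emptyset$ and, by the reduction, non-rank-determining. Given $\U$, I would orient each component $X$ of $\Gamma\setminus\U$ so that it flows into $\U$ through its distinguished boundary point $p_X$ with ${\rm outdeg}_X(p_X)\geq 2$, and orient $\U$ so that the resulting partial orientation $\O$ is sourceless; set $D=D_{\O}$, so $r(D)\geq 0$ by Theorem \ref{eff1}(ii). The two guaranteed edges at each $p_X$ are precisely what let the burning re-enter $X$ after one chip is removed from $X$, forcing $r(D-(p))=r(D)$ for every $p\notin\U$, i.e. $G(D)\subseteq\U$.

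\emph{Main obstacle.} The reduction and the divisor bookkeeping are routine. The genuine difficulty is twofold. First, one must faithfully transport the discrete apparatus of Sections \ref{Dhar} and \ref{dpr&RR} — the generalized cocycle reversal system, Theorem \ref{eff1}, and the rank-via-path-reversals formula of Theorem \ref{path} — to metric graphs, where partial orientations, path reversals \emph{inside} edges, and the burning algorithm must be given continuous meaning; the motivating special case preceding the statement is exactly the one-edge prototype of this transport. Second, and most delicate, is the geometric identification in the sufficiency step of the unburnt region as a \emph{special} open set: one must verify that the metric burning stalls precisely because each complementary component presents a boundary point absorbing two edges, which is the continuous incarnation of the ``no available edge pivot'' stopping condition of the oriented Dhar's algorithm. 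Making this identification rigorous, together with the matching orientation construction in the necessity step, is where the substance of the proof lies.
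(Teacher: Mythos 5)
Your overall architecture---reduce to a one-chip criterion, realize divisors by orientations, detect rank-reducing points via directed path reversals, and exhibit the special open set as a region the fire/orientation reaches only once---does match the paper's, and your reduction to ``$A\cap G(D)\neq\emptyset$ for every $D$ of nonnegative rank'' is a correct variant of the paper's reduction. But the sufficiency step, which you yourself flag as ``the substance,'' contains a genuine gap rather than a deferred verification. The mechanism you propose does not apply as stated: Algorithm \ref{orientedDhar} is initialized at the set of sources, so it says nothing about a sourceless orientation, which is exactly what Theorem \ref{eff1}(ii) hands you when $r(D)\geq 0$; there is no ``unburnt region'' for it to produce. More fundamentally, you work with an arbitrary sourceless $\O$ of arbitrary nonnegative rank and assert that the first-step endpoints of geodesics in $\mathcal{G}_k$ sweep out a special open set; nothing in the sketch defines that set, shows it is open and connected, or shows that each complementary component has a boundary point of outdegree at least two, and the dependence of the admissible endpoints on the choice of tail $q_0$ of the reversed path is not addressed. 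The paper sidesteps all of this by reducing further, to divisors of the form $\nu+(q)$ with $\nu\in\mathcal{N}$, and realizing $\nu$ as $D_{\O}$ for a $q$-connected \emph{acyclic full} orientation (the metric analogue of Theorem \ref{q-con acyclic}). Only then does the candidate set acquire a clean definition---$\U$ is the set of points reachable from $q$ by a unique directed path---from which nonemptiness, connectedness, openness, and specialness are checked directly using acyclicity and $q$-connectivity. Without that reduction your $\U$ does not exist as a mathematical object.

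Your necessity step is closer to the paper's but the orientation of each component $X$ of $\U^c$ is backwards: the paper takes a $p_X$-connected acyclic orientation of $X$ (flowing \emph{from} $p_X$ \emph{into} $X$), so that every $a\in X$ is reachable from $q$ along two distinct routes through the two edges entering $p_X$ from $\U$; this double reachability is what forces every path reversal from $q$ to $a$ to close a directed cycle, hence $D_{\O}+(q)-(a)\notin\mathcal{N}$ for all $a\in A$. Orienting $X$ ``into $\U$'' and appealing to the burning ``re-entering $X$'' is a different, unproved construction. In short: right scaffolding, but the two geometric verifications that constitute the proof---that the uniquely-reachable region is special, and that the glued acyclic orientation defeats every path reversal into $A$---are missing, and the first cannot be recovered by the route you describe.
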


\begin{proof}
Suppose that $A$ is not rank-determining. By the {Lemmas~\ref{Luonegrank} and \ref{Luodegg-1}}, we may assume that there exists a
divisor $D \in\mathcal{N}$ such that $D +(q) -(a)$ has nonnegative
rank for each $a \in A$. By \cite[Theorem 7.10]{mikhalkin465tropical}
we can take $D$ to be $D_{\mathcal{O}}$ for $\mathcal{O}$ a
$q$-connected acyclic
orientation. The divisor $D_{\mathcal{O}} +(q) -(a)$ has nonnegative
rank and
corresponds to the orientation $\mathcal{O}'$ obtained by reversing a directed
path from $q$ to $a$. Then by \cite[Theorem
7.8]{mikhalkin465tropical}, we know that this orientation cannot be
acyclic, thus we conclude that whenever a path from $q$ to $A$ is
reversed, it causes a directed cycle to appear in the graph.
Equivalently, there exist at least two paths from $q$ to each point of
$A$. Let $\mathcal{U}$ be the set of points which are reachable from
$q$ by a
unique directed path. We claim that $\mathcal{U}$ is a special open
set not
intersecting $A$.

To see that $\mathcal{U}$ is nonempty, notice that the point $q \in
\mathcal{U}$,
otherwise there would be a path from $q$ to itself, implying the
existence of a directed cycle. Every point in $\mathcal{U}$ lies on a
path $P$
from $q$. Moreover, $P \subset\mathcal{U}$, hence by transitivity, ignoring
orientation, $\mathcal{U}$ is connected.

We prove that $\mathcal{U}$ is open by verifying that the complement
of $\mathcal{U}$
is closed. Suppose we have a sequence $S$ of points in $\mathcal{U}^{c}$
converging to some point $p$. There exists some convergent subsequence
$S'$ of $S$ which is contained in an edge $e$ incident to $p$. If we go
far enough along in $S'$ we may assume that all of the points in the
sequence are contained in a consistently oriented segment of $e$. If
this segment is oriented towards $p$, it is clear that $p$ is also
twice reachable from $q$ and thus contained in $\mathcal{U}^{c}$. On
the other
hand, if the edge is oriented away from $p$, the points in our sequence
must be twice reachable from $q$ through $p$, so $p$ is in $\mathcal{U}^{c}$.

Lastly, we show that every connected component $X$ of $\Gamma\setminus
\mathcal{U}$ has a boundary point $p$ with $\mathrm{outdeg}_{X}(p)\geq
2$. Suppose
that there does exist some connected component $X$ of $\mathcal
{U}^{c}$ with
$\mathrm{outdeg}_{X}(p) = 1$ for all boundary points $p$ of $X$. The
restriction of $\mathcal{O}$ to $X$ must also be acyclic, thus it
contains some
source~$s$. This point $s$ cannot be in the interior of~$X$, otherwise
this point would not be reachable from~$q$. Therefore we must have $s =
p$ for $p$ some point on the boundary of~$X$, and $s$ is only reachable
from $q$ along the unique edge $e$ incident to $s$ in $\mathcal{U}$.
But $s \in
\mathcal{U}^{c}$, hence is twice reachable from $q$, therefore so are
all of
the points in $e$ in some neighborhood of $s$, but this contradicts
that these points are in $\mathcal{U}$. This establishes that $A$ is a
rank-determining set.


For demonstrating the converse, we show that given a special open set
$\mathcal{U}$ not intersecting as set $A\subset\Gamma$, we may
construct an
acyclic orientation $\mathcal{O}$ such that $A$ is not
rank-determining for
$D_{\mathcal{O}}$. That is, there exists a point $q \in U$
such that
every $a \in A$ is twice reachable from $q$ in $\mathcal{O}$, which implies
that $r(D_{\mathcal{O}}+(q)-(a))\geq0$ and contradicts {Lemma~\ref{Luodegg-1}}.

Let $q \in\mathcal{U}$ and take a $q$-connected acyclic orientation
of $\mathcal{U}$.
Because $\mathcal{U}$ is connected and open, it follows that $\mathcal
{O}$ will have
sinks at each of the boundary points of $\mathcal{U}$. For any connected
component $X$ of $\mathcal{U}^{c}$ and boundary point $p\in X$ with
$\mathrm{outdeg}_{X}(p) \geq2$, we can construct a $p$-connected acyclic
orientation of $X$. Proceeding in this way for each component $X$, we
obtain a full acyclic orientation $\mathcal{O}$. For $a \in A$, let
$X$ be the
connect component of $\mathcal{U}^{c}$ such that $a \in X$, and $p \in
X$ such
that $\mathcal{O}|_{X}$ is $p$-connected. We know that $p$ is twice reachable
from $q$, hence $a$ is twice reachable from $q$ through $p$. It follows
that the reversal of any path from $q$ to $a$ will cause a directed
cycle to appear in $\Gamma$. This implies that $A$ is not
rank-determining for $D_{\mathcal{O}}+ (q)$ as $D_{\mathcal{O}}\in
\mathcal{N}$, but
$D_{\mathcal{O}}+ (q) -(a) \notin\mathcal{N}$ for any $a \in A$.\looseness=-1
\end{proof}

\section{Max-flow min-cut and divisor theory}\label{MFMC}

In this section we investigate the intimate relationship between
network flows, a topic of fundamental importance in combinatorial
optimization, and the theory of divisors on graphs. We recall that a
\textit{network} $N$ is a directed graph $\vec{G}$ together with a
source vertex $s \in V(\vec{G})$, a sink vertex $t \in V(\vec{G})$,
and a capacity function $c: E(\vec{G}) \rightarrow\mathbb{R}_{\geq
0}$. A \textit{flow} $f$ on $N$ is a function $f : E(\vec{G})
\rightarrow\mathbb{R}_{\geq0}$ such that $f(e) \leq c(e)$ for all $e
\in E(\vec{G})$ and
\begin{eqnarray*}\sum_{e \in E^{+}(v)} f(e) = \sum_{e \in E^{-}(v)} f(e)
\end{eqnarray*}
for all $v \neq s, t$, where $E^{+}(v)$ and $E^{-}(v)$ are the set of
edges pointing towards and away from $v$, respectively. Let $X \subset
V(\vec{G})$ such that $s \in X$. A simple calculation shows that
\begin{eqnarray*}[ll]
\sum_{v \in X}(\sum_{e \in E^{-}(v)}f(e) -\sum_{e
\in E^{+}(v)}f(e)) =
\\
\sum_{e \in\langle X,X^{c}\rangle}f(e) -\sum_{e
\in\langle X^{c},X \rangle}f(e),
\end{eqnarray*}
where $\langle X,X^{c} \rangle$ and $ \langle X^{c},X \rangle$ are
the set of edges in the cut $(X, X^{c})$ directed towards $X^{c}$ and
$X$ respectively. This sum is independent of the choice of $X$, in
particular it is equal to
\begin{eqnarray*}\sum_{e \in E^{-}(s)}f(e) - \sum_{e \in
E^{+}(s)}f(e)= \sum_{e \in E^{+}(t)}f(e)-\sum_{e \in E^{-}(t)}f(e),
\end{eqnarray*}
which we call the flow value from $s$ to $t$ (see {Fig.~\ref{networkexample2}}).

\begin{figure}
\includegraphics[scale=.6]{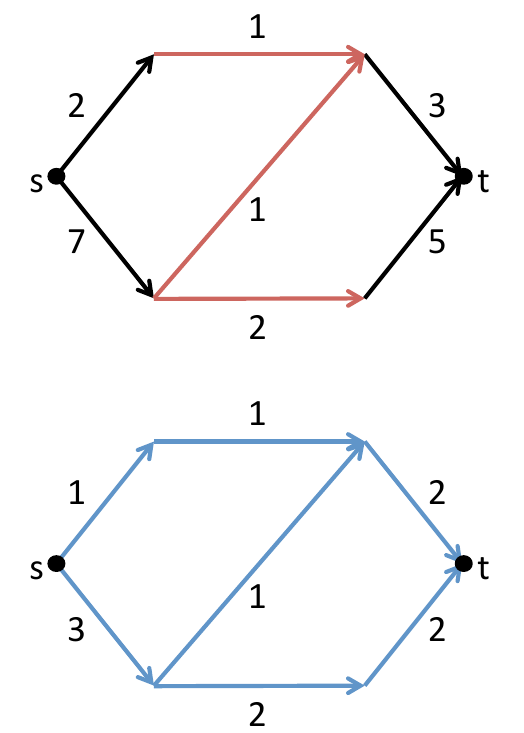}
\caption{Top: A network with source $s$, sink $t$, capacities listed next to
edges, and a minimum cut of size 4 colored red. Bottom: A maximum flow in this
network with flow value 4. Note that the flow along each edge in the minimum cut
is necessarily equal to the capacity of that edge.}
\label{networkexample2}
\end{figure}

One may view a flow as a fluid flow from $s$ to $t$ through a system of
one-way pipes where the capacity of a given edge represents the maximum
rate at which water can travel through the pipe. The flow across any
given cut separating $s$ from $t$ is restricted by the sum of the
capacities of the edges crossing a cut $(X,X^{c})$ towards $t$, which
we denote $c(X)$. The ``max-flow min-cut'' theorem, abbreviated as MFMC,
states that equality is obtained, that is, the greatest flow from $s$
to $t$ is equal to the minimum capacity of a cut separating $s$ from
$t$. This theorem was first proven by
Ford and Fulkerson \cite{ford1956maximal}, and was
independently discovered by Elias,
Feinstein, and Shannon \cite{elias1956note},
 and Kotzig \cite{kotzig1956connectivity}
  the following year. We refer the reader to
Schrijver \cite{schrijver2005history},
 for an interesting account of
the problem's history.

There are two standard methods of proving MFMC, the first is to
demonstrate that a flow of maximum value can be obtained greedily by
so-called augmenting paths which leads to the classical
Ford--Fulkerson
algorithm, and the second is to rephrase the max flow problem as a
linear program and establish MFMC via linear programming duality. We
remark that it has recently been shown that this theorem may also be
viewed as a manifestation of directed Poincar\'{e} duality \cite{ghrist2013topological}.

Momentarily switching gears, we mention the following theorem which
characterizes the collection of orientable divisors on a graph in terms
of Euler characteristics. This result has been rediscovered multiple
times, but appears to originate with S.L. Hakimi \cite{hakimi1965degrees}.
 It might be natural to view his theorem
historically as an extension to arbitrary graphs of Landau's
characterization of score vectors for tournaments \cite{landau1953dominance},
i.e., divisors associated to orientations of the
complete graph, although it seems that Hakimi was unaware of Landau's
result which was presented in a paper on animal behavior a decade earlier.

Recall we define the Euler characteristic of $G[S]$ to be $\chi(S) =
|S|-|E(G[S])|$. Given a divisor $D$ and a non-empty subset $S \subset
V(G)$, we define
\begin{eqnarray*}[cc]
\chi(S,D) = \mathrm{deg}(D|_{S}) +\chi(S)
\\
\chi(G,D) = \mathrm{min}_{S\subset V(G)} \chi(S,D)
\\
{\bar{\chi}(S,D)} = |E(G)| - |E(G[S^{c}])| - |S| -
\mathrm{deg}(D|_{S})
\\
 {\bar{\chi}}(G,D) = \mathrm{min}_{S\subset V(G)} {\bar
{\chi}(S,D)}.
\end{eqnarray*}

\begin{theorem}[Hakimi \cite{hakimi1965degrees}, Felsner \cite{felsner2004lattice},
 An--Baker--Kuperberg--Shokrieh \cite{an2013canonical}] \label{EC}
A divisor $D$ of degree $g-1$ is orientable if and only if $\chi(G,D)
\geq0$.

\end{theorem}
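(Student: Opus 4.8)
The plan is to prove the two implications separately: necessity is a direct counting identity, while sufficiency reduces to the max-flow min-cut theorem through an auxiliary transportation network.

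\textbf{Necessity.} Suppose $D = D_{\O}$ for a full orientation $\O$. I would fix an arbitrary nonempty $S \subseteq V(G)$ and expand $\,{\rm deg}(D|_S) = \sum_{v \in S}\big({\rm indeg}_{\O}(v) - 1\big)$. The quantity $\sum_{v\in S}{\rm indeg}_{\O}(v)$ counts each edge of $G[S]$ exactly once (it is oriented toward exactly one of its two endpoints, both lying in $S$) plus each edge of the cut $(S,S^c)$ that is oriented toward $S$. Writing $c_S$ for the number of cut edges oriented toward $S$, this gives $\,{\rm deg}(D|_S) = |E(G[S])| + c_S - |S|$, whence $\chi(S,D) = {\rm deg}(D|_S) + |S| - |E(G[S])| = c_S \geq 0$. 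Since $S$ was arbitrary, $\chi(G,D) \geq 0$.

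\textbf{Sufficiency.} Assuming $\chi(G,D) \geq 0$, I would construct a network $N$ whose integral maximum flows correspond exactly to the orientations of $G$ realizing the prescribed indegrees ${\rm indeg}(v) = D(v)+1$. Introduce a source $s$ and sink $t$; for each edge $e \in E(G)$ a node $u_e$ with an arc $s \to u_e$ of capacity $1$; for each vertex $v$ a node $w_v$ with an arc $w_v \to t$ of capacity $D(v)+1$; and for each $e=(x,y)$ two arcs $u_e \to w_x$ and $u_e \to w_y$ of capacity $|E(G)|+1$. An integral flow of value $|E(G)|$ must saturate every arc $s\to u_e$, and by conservation and integrality the unit arriving at $u_e$ is routed entirely along one of $u_e \to w_x$, $u_e \to w_y$, which I interpret as orienting $e$ toward that endpoint; then the flow into $w_v$ equals ${\rm indeg}(v) \leq D(v)+1$. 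The hypothesis ${\rm deg}(D) = g-1$ yields $\sum_v (D(v)+1) = {\rm deg}(D) + |V(G)| = |E(G)|$, so a flow of value $|E(G)|$ forces ${\rm indeg}(v) = D(v)+1$ for every $v$, i.e.\ $D_{\O} = D$. Thus it suffices to show the maximum flow value equals $|E(G)|$.

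\textbf{The min-cut computation.} By the max-flow min-cut theorem, together with the integrality of optimal flows for integral capacities, I need only verify that every $s$--$t$ cut has capacity at least $|E(G)|$. Any cut severing some arc $u_e \to w_v$ has capacity exceeding $|E(G)|$ automatically, so I examine only cuts in which every edge-node placed on the source side has both of its endpoints on the source side. Letting $A \subseteq V(G)$ denote the vertices $v$ with $w_v$ on the source side, minimizing the cut capacity places exactly the edges of $G[A]$ on the source side, and the resulting capacity is $\big(|E(G)| - |E(G[A])|\big) + \big({\rm deg}(D|_A) + |A|\big)$. This is at least $|E(G)|$ precisely when ${\rm deg}(D|_A) + |A| - |E(G[A])| = \chi(A,D) \geq 0$. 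Hence the minimum cut is at least $|E(G)|$ if and only if $\chi(G,D) \geq 0$, which establishes the equivalence and simultaneously exhibits the asserted logical equivalence with max-flow min-cut. I expect the main obstacle to be the bookkeeping in this cut analysis: correctly ruling out the infinite cuts and checking that the optimal placement of edge-nodes recovers exactly $|E(G[A])|$, so that the min-cut capacity collapses onto the Euler-characteristic expression $\chi(A,D)$. The integrality step and the degree computation forcing equality of the indegrees are routine once the network is set up.
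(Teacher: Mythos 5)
Your proof is correct, but the sufficiency direction takes a genuinely different route from the paper's. You build the classical edge--vertex incidence network (a unit-capacity arc $s\to u_e$ for each edge, a capacity-$(D(v)+1)$ arc $w_v\to t$ for each vertex, and large-capacity arcs from each edge-node to its two endpoints), so that integral flows of value $|E(G)|$ \emph{are} orientations realizing $D$, and your min-cut computation lands directly on $\chi(A,D)$. The paper instead starts from an \emph{arbitrary} full orientation $\O$, forms $\tilde D = D - D_{\O}$, attaches $s$ and $t$ to the supports of $\tilde D$ with unit capacities on the already-oriented edges of $G$, and reverses the support of an integral max flow to convert $\O$ into $\O_D$; its cut analysis accordingly runs through the dual quantity $\bar{\chi}$ and Lemma \ref{dualEC}. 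Your construction is more self-contained (no starting orientation, no dual reformulation) and is the standard degree-constrained-orientation reduction; the paper's reorientation-by-flow-reversal viewpoint is what later powers Theorem \ref{equiv} (the implication from Theorem \ref{EC} back to max-flow min-cut), the torsor structure of Theorem \ref{torsor}, and Algorithm \ref{breakalg}, since in all of those the object of interest is precisely the symmetric difference of two orientations, i.e.\ a flow. Two small points to tidy: you should note that $D(v)+1\geq 0$, which your capacities require, already follows from $\chi(\{v\},D)\geq 0$; and your closing remark that the argument ``simultaneously exhibits the logical equivalence with max-flow min-cut'' overreaches --- it gives only the implication from max-flow min-cut to Theorem \ref{EC}, the converse being a separate argument.
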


{Theorem~\ref{EC}} states that the orientable divisors on a graph form
the lattice points in a polytope $P$. The \textit{graphical zonotope},
$Z_{G}$ \cite{postnikov2009permutohedra} or \textit{acyclotope} \cite{zaslavsky2013acyclotope}, is the Minkowski sum of the line segments
$[e_{i},e_{j}]$ where $(i,j)$ ranges over all edges in $G$, and $P$ is
obtained by translating $Z_{G}$ by $-\vec{1}$.  We remark that Bartels, Mount, and Welsh \cite{bartels1997win} proved that the partially
orientable divisors on $G$ are the integer points in a polytope which
they call the \textit{win vector polytope}, and the graphical zonotope is a facet of this polytope.  The win vector polytope was rediscovered in an earlier draft of the present article.

There is a ``dual'' formulation of Theorem \ref{EC} which is better suited
for our approach.

\begin{lemma}\label{dualEC}
Let $D$ be a divisor of degree $g-1$, and $S \subset V(G)$, then $\chi
(S,D)\geq0$ if and only if ${\bar{\chi}}(S^{c},D)\geq0$
\end{lemma}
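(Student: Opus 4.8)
The plan is to establish the pointwise identity $\bar{\chi}(S,D) = \chi(S^c,D)$, valid for every nonempty proper subset $S \subsetneq V(G)$ precisely because ${\rm deg}(D) = g-1$, and then to pass to minima. Since complementation $S \mapsto S^c$ is an involution on the subsets of $V(G)$, this identity will immediately force $\bar{\chi}(G,D) = \chi(G,D)$, which is in fact stronger than the stated equivalence of signs.

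For the identity itself, I would begin from the definition
$$\chi(S^c,D) = {\rm deg}(D|_{S^c}) + |S^c| - |E(G[S^c])|$$
and substitute ${\rm deg}(D|_{S^c}) = {\rm deg}(D) - {\rm deg}(D|_S) = (g-1) - {\rm deg}(D|_S)$ together with $|S^c| = |V(G)| - |S|$. Invoking the genus relation $g - 1 = |E(G)| - |V(G)|$ to replace $(g-1) + |V(G)|$ by $|E(G)|$, the right-hand side collapses to
$$\chi(S^c,D) = |E(G)| - |E(G[S^c])| - |S| - {\rm deg}(D|_S),$$
which is exactly $\bar{\chi}(S,D)$ by definition. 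This is the only place where the hypothesis ${\rm deg}(D)=g-1$ enters; it is a purely bookkeeping computation.

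To conclude, I would note that as $S$ ranges over the subsets appearing in the definition of $\bar{\chi}(G,D)$, the complement $T = S^c$ ranges over the same collection, so that $\min_S \bar{\chi}(S,D) = \min_T \chi(T,D) = \chi(G,D)$. The one point requiring a little care is the treatment of the degenerate case $S = V(G)$ (equivalently $S^c = \emptyset$): a direct check gives $\bar{\chi}(V(G),D) = |E(G)| - |V(G)| - (g-1) = 0$ and likewise $\chi(V(G),D) = (g-1) + |V(G)| - |E(G)| = 0$, so whichever convention one adopts for the index set of the minimum, the two extremal contributions agree and the matching of the ranges is unspoiled. Hence $\chi(G,D) \geq 0$ if and only if $\bar{\chi}(G,D) \geq 0$, and indeed $\chi(G,D) = \bar{\chi}(G,D)$.

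The main obstacle here is essentially nonexistent: beyond executing the genus substitution correctly, the only thing to watch is that the boundary cases do not disrupt the bijective correspondence between the two minimization ranges, which the computation above settles.
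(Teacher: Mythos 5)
Your proof is correct and follows essentially the same route as the paper: the paper's argument is precisely the complementation identity $\chi(S,D) = \bar{\chi}(S^c,D)$ (stated there as an equivalence of signs and justified by ${\rm deg}(D)=g-1$), followed by taking minima over all subsets. You simply make the bookkeeping with the genus relation explicit and check the degenerate case $S=V(G)$, which the paper leaves implicit.
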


\begin{proof}
This is a straightforward computation.
\end{proof}

\begin{corollary}
If $D$ is a divisor of degree $g-1$, then $\chi(G,D) \geq0$ if and
only if ${\bar{\chi}}(G,D) \geq\nobreak 0$.
\end{corollary}

The following proof originally due to Felsner (and rediscovered
independently by the author) reduces the problem to an application of MFMC.

\begin{proof}[Proof of {Theorem~\ref{EC}}]
If $\mathcal{O}$ is a full orientation, it is clear that $ \chi
(G,D_{\mathcal{O}})
\geq0$. We now establish the converse. Let $D$ be a divisor of degree
$g-1$ satisfying $ \chi(G,D) \geq0$. By {Lemma~\ref{dualEC}} it
follows that ${\bar{\chi}}(G,D) \geq0$. We now demonstrate by
explicit construction that this condition is sufficient to guarantee
the existence of an orientation $\mathcal{O}_{D}$. Let $\mathcal{O}$
be an arbitrary
full orientation and take $\tilde{D} = D-D_{\mathcal{O}}$. Denote the negative
and positive support of $\tilde{D}$ as $S$ and $T$, respectively. Add
two auxiliary vertices $s$ and $t$ with directed edges from $s$ to each
vertex $s' \in\mathrm{supp}(S)$ with capacity $\tilde{D}(s')$ and from
each vertex $t' \in\mathrm{supp}(T)$ to $t$ with capacity $-\tilde
{D}(t')$. Assign each edge in $\mathcal{O}$ capacity 1, and take $N$
be the
corresponding network.

We claim that there is a flow from $s$ to $t$ with flow value $\mathrm{deg}^{+} ({\tilde{D}}) = \mathrm{deg}^{-} ({\tilde{D}})$. By MFMC, to
show that such a flow exists, we need to that show the minimum
capacity of a cut is at least $\mathrm{deg}^{+} (\tilde{D})$. Any $s-t$
cut in $N$ is determined by a set $X \subset\{ V(G) \cup\{s\}\}$. Let
$X \cap T = T_{1}$, $T \setminus T_{1} = T_{2}$, $X \cap S = S_{1}$,
and $S \setminus S_{1} = S_{2}$. The capacity of the cut, $c(X)$ is
equal to $\mathrm{deg}^{-}({\tilde{D}}|_{S_{2}}) + \mathrm{deg}^{+}({\tilde
{D}}|_{T_{1}}) + {\bar{\chi} }(X\setminus\{s\},D_{\mathcal{O}})$.
This is
because ${\bar{\chi} }(X\setminus\{s\},D_{\mathcal{O}})$ counts the number
of edges leaving $X\setminus\{s\}$ in $\mathcal{O}$. We claim that
${\bar
{\chi} }(X\setminus\{s\},D_{\mathcal{O}}) \geq\mathrm{deg}^{-}({\tilde
{D}}|_{S_{1}}) - \mathrm{deg}^{+}({\tilde{D}}|_{T_{1}}) $. Supposing the
claim, we have that $c(X) \geq\mathrm{deg}^{-}({\tilde{D}}|_{S_{2}}) +
\mathrm{deg}^{+}({\tilde{D}}|_{T_{1}}) + \mathrm{deg}^{-}({\tilde
{D}}|_{S_{1}}) - \mathrm{deg}^{+}({\tilde{D}}|_{T_{1})})
= \mathrm{deg}^{-}({\tilde{D}}|_{S_{2}}) + \mathrm{deg}^{-}({\tilde{D}}|_{S_{1}})
= \mathrm{deg}^{-}({\tilde{D}}|_{S})$ as desired.

\begin{figure}
\includegraphics[scale=.9]{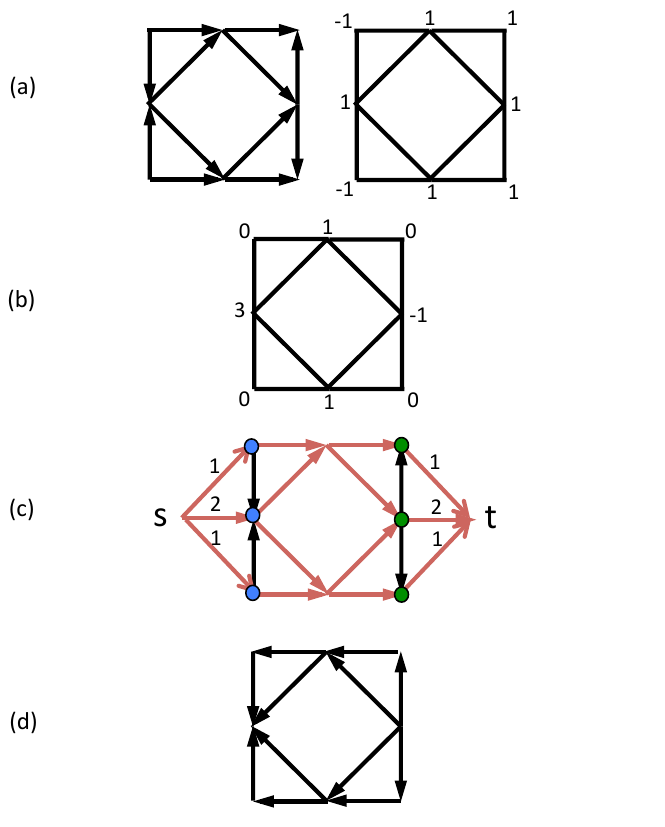}
\caption{(a) Left: An orientation $\mathcal{O}$ of a graph $G$. Right: The
divisor $D_{\mathcal{O}}$ on $G$. (b) A divisor $D$ on $G$. (c) The network $N$
with auxiliary vertices $s$ and $t$. The set $S$ is colored blue, the set $T$ is
colored green, and the additional edges are labeled by their capacities. The
remaining edges have capacity 1. The edges colored red are the support of a
maximal flow $f$. (d) An orientation $\mathcal{O}_{D}$ obtained by reversing the
flow $f$ on $N$ and then restricting to $G$.
(For interpretation of the references to
color in this figure legend, the reader is referred
to the web version of this article.)}
\label{Eulernetex.pdf}
\end{figure}

To prove that ${\bar{\chi} }(X\setminus\{s\},D_{\mathcal{O}}) \geq
\mathrm{deg}^{-}({\tilde{D}}|_{S_{1}}) - \mathrm{deg}^{+}({\tilde{D}}|_{T_{1}})
$ we note that ${\bar{\chi} }(X\setminus\{s\},D_{\mathcal{O}}) =
{\bar{\chi
} }(X\setminus\{s\},D) + \mathrm{deg}^{-}({\tilde{D}}|_{S_{1}}) - \mathrm{deg}^{+}({\tilde{D}}|_{T_{1}})$ and ${\bar{\chi} }(X\setminus\{s\}
,D) \geq0$ by assumption, and the claim follows. Now let $f$ be an max
$s-t$ flow in $N$ with flow value $\mathrm{deg}^{+}({\tilde{D}}|_{S})$.
As is guaranteed by classical proofs of MFMC, we may further take $f$
to be integral. To complete the proof we simply reverse the direction
of each edge in $\mathcal{O}$ in the support of $f$ to obtain a reorientation
of $N$ which when restricted to $G$ gives a desired orientation
$\mathcal{O}
_{D}$. Because $f$ is taken to be integral, we know that an edge is
reversed if and only if the flow along this edge is 1. See {Fig.~\ref{Eulernetex}} for an illustrating example.
\end{proof}

We now demonstrate the converse implication. To the best of the
author's knowledge, this argument has not appeared previously in the literature.

\begin{theorem}\label{equiv}
The max-flow min-cut theorem is equivalent to {Theorem~\ref{EC}}.
\end{theorem}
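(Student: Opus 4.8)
The plan is as follows. Theorem \ref{EC} is itself an equivalence (orientability $\iff \chi(G,D)\ge 0$), so to establish logical equivalence with MFMC I only need two implications. One of them, ``MFMC $\Rightarrow$ Theorem \ref{EC},'' is precisely the proof of Theorem \ref{EC} just given, which derives the existence of an orientation from a maximum flow. Thus the entire content of Theorem \ref{equiv} is the converse implication ``Theorem \ref{EC} $\Rightarrow$ MFMC,'' and this is what I would prove. Since the inequality (max flow) $\le$ (min cut) is the trivial direction already observed above, it suffices to produce, for a network all of whose $s$--$t$ cuts have capacity at least $v$, a flow of value $v$; taking $v$ to be the minimum cut value then yields equality. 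I would first reduce to integer capacities (rational capacities by scaling, and the real case by the usual continuity/limiting argument), restrict to the connected component containing $s$ and $t$, and delete any loops, which carry no flow and cross no cut.

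The key step is a dictionary between integer flows and orientations of an auxiliary multigraph, to which Theorem \ref{EC} can be applied. Given the network on $\vec G$ with integer capacities $c$, form the undirected multigraph $G^{\ast}$ by replacing each directed edge $e=(u\to w)$ of capacity $c(e)$ with $c(e)$ parallel edges joining $u$ and $w$. Writing $C(w)=\sum_{e=(w\to\, \cdot\,)} c(e)$ for the total capacity leaving $w$, I would consider the divisor $D$ on $G^{\ast}$ determined by the prescribed indegrees $\mathrm{indeg}(w)=C(w)$ for $w\neq s,t$, $\mathrm{indeg}(s)=C(s)-v$, and $\mathrm{indeg}(t)=C(t)+v$, i.e.\ $D(w)=\mathrm{indeg}(w)-1$. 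A direct computation shows that an orientation of $G^{\ast}$ realizing these indegrees is the same data as an integer flow of value $v$: setting $f(e)$ equal to the number of parallel copies of $e$ oriented along its $\vec G$-direction, the capacity bound $0\le f(e)\le c(e)$ is automatic, the indegree conditions at internal vertices are equivalent to flow conservation, and the conditions at $s,t$ encode that the flow has value $v$. The $\mp v$ contributions at $s$ and $t$ cancel in the total, so $\sum_{w}\mathrm{indeg}(w)=|E(G^{\ast})|$ and hence $\deg(D)=g^{\ast}-1$, exactly the hypothesis of Theorem \ref{EC}.

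It then remains to translate the condition $\chi(G^{\ast},D)\ge 0$ into the min-cut condition. Using $\chi(S,D)=\sum_{w\in S}\mathrm{indeg}(w)-|E(G^{\ast}[S])|$, I would check the cases according to whether $S$ contains $s$ and/or $t$. When $S$ contains neither, or both, of $s,t$, the inequality reduces to $\sum_{e\in\langle S,S^c\rangle} c(e)\ge 0$ and is automatic; when $t\in S,\ s\notin S$ it reads $\sum_{e\in\langle S,S^c\rangle} c(e)+v\ge 0$, again automatic. The only binding case is $s\in S,\ t\notin S$, where it becomes exactly $c(S)=\sum_{e\in\langle S,S^c\rangle} c(e)\ge v$. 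Hence $\chi(G^{\ast},D)\ge 0$ if and only if every $s$--$t$ cut has capacity at least $v$, and Theorem \ref{EC} then produces an orientation, hence a flow of value $v$, precisely in that case; choosing $v$ equal to the minimum cut capacity completes the proof.

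The main obstacle is getting the flow/orientation dictionary exactly right---in particular verifying that an \emph{arbitrary} orientation with the prescribed indegrees (not just one built from a known flow) yields a conservative flow, and that the source/sink offsets $\mp v$ are placed so that they cancel in the degree count yet isolate the single cut inequality in the Euler-characteristic translation. Once this correspondence is pinned down, both the conservation check and the case analysis for $\chi(S,D)$ are routine calculations, and the logical equivalence follows immediately by pairing this implication with the proof of Theorem \ref{EC} already presented.
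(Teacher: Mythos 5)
Your proposal is correct and follows essentially the same route as the paper: encode integer capacities as parallel edges of a multigraph, perturb the associated degree-$(g-1)$ divisor by $\mp v$ at $s$ and $t$, verify that the Euler-characteristic inequality is automatic except for $s$--$t$ cuts (where it is exactly the min-cut hypothesis), invoke Theorem \ref{EC}, and read a value-$v$ flow off the resulting orientation, with rational and real capacities handled by scaling and limits. The only differences are presentational: the paper phrases the cut check via $\bar\chi$ and Lemma \ref{dualEC} and extracts the flow as the symmetric difference with the reference orientation $N$, whereas you prescribe the indegrees directly and read $f(e)$ off as the number of forward-oriented parallel copies.
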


\begin{proof}
The previous argument shows that max-flow min-cut implies the Euler
characteristic description of orientable divisors {Theorem~\ref{EC}}. We
now demonstrate that {Theorem~\ref{EC}} can be applied in proving MFMC.
It is a classical fact that integer MFMC implies rational MFMC by
scaling, and rational MFMC implies real MFMC by taking limits, thus it
suffices to prove MFMC for networks with integer capacities. Let $N$ be
some network with integer valued capacities which we can view as an
orientation of a multigraph $G$ where the number of parallel edges is
given by the capacities. Suppose that the minimum capacity of a cut
between $s$ and $t$ is of size $k$, and let ${\tilde{D}} = k(t)-k(s)$.
We claim that $D = D_{N}-{\tilde{D}}$ is orientable. By {Theorem~\ref{EC}} and {Lemma~\ref{dualEC}}, it suffices to prove that ${\bar{\chi
}}(G,D)\geq0$. Let $X\subset V(G)$ with $s,t \notin X$. We have that
${\bar{\chi}}(X,D) = {\bar{\chi}}(X,D_{N}) \geq0$. Now take $X
\subset V(G)$ with $s \in X$ and $t \notin X$, and let $c(X)$ be the
capacity associated to this cut. By definition, ${\bar{\chi}}(X,D) +
k = {\bar{\chi}}(X,D_{N}) \geq c(X) $, therefore ${\bar{\chi}}(X,D)
= c(X)-k \geq0$. Finally, we have that ${\bar{\chi}}(X^{c},D) =
{\bar{\chi}}(X^{c},D_{N}) +k \geq0$, and the claim follows.

We next claim that the set of oriented edges $f$ from $\mathcal
{O}_{D}$ which
are oriented differently in $N$ form a flow in $N$ with flow value $k$.
For each vertex $v \in V(G)\setminus\{s,t\}$, reversing the edges in
$f$ preserves the total indegree at $v$, thus the indegree of $v$ in
$f$ equals its outdegree in $f$. For $s$, its outdegree in $f$ minus
its indegree in $f$ is $k$, and for $t$ its indegree in $f$ minus its
outdegree in $f$ is $k$. This proves the claim.
\end{proof}

We leave it to the reader to verify the stronger fact that the flow $f$
in the proof of {Theorem~\ref{equiv}} decomposes as a disjoint union of
$k$ directed paths from $s$ to $t$ along with the possible addition of
some directed cycles.

We remark that if $\mathcal{O}'$ is an integer network, i.e. a full orientation
with distinguished vertices $s$ and $t$, and we wish to find a flow
from $s$ to $t$ of value $k$, we can take $D= k(s)-k(t) + D_{\mathcal{O}'}$.
Applying {Algorithm~\ref{construct}}, we will always be in Case 2, and
we recover the Ford--Fulkerson algorithm. The algorithm produces an
orientation $\mathcal{O}$ such that the set of oriented edges in
$\mathcal{O}$ which
are oriented differently in $\mathcal{O}'$ form a flow of value $k$
from $s$ to $t$.

Let $\Gamma$ be a metric graph. We recall that a \textit{break
divisor} is a divisor of degree $g$ with the property that for all $p \in\Gamma$
there is an injective mapping of chips at $p$ to tangent directions at
$p$, such that if we cut the graph at the specified tangent directions,
we obtain a connected contractable space, i.e., a spanning tree. These
divisors were first introduced in the work of Mikhalkin and Zharkov
\cite{mikhalkin465tropical}, and the following theorem states that
they are precisely the divisors associated to $q$-connected
orientations offset by a chip at $q$. Following \cite{an2013canonical}, we call the divisors associated to $q$-connected
orientations, \textit{$q$-orientable}.

\begin{theorem}[An--Baker--Kuperberg--Shokrieh \cite{an2013canonical}]
\label{break}
A divisor $D$ of degree $g$ is a break divisor if and only if for any
point $q \in\Gamma$, $D-(q)$ is $q$-orientable.
\end{theorem}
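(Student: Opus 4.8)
The plan is to establish, for each point $q$, a single combinatorial correspondence between the break structures witnessing that $D$ is a break divisor and the $q$-connected orientations $\O$ with $D_{\O} = D - (q)$, so that both implications of Theorem~\ref{break} fall out of the same bijection. Before invoking any of this I would reduce to the discrete setting: since $\deg(D)=g$ is fixed and $\mathrm{supp}(D)$ is finite, I can refine the combinatorial model of $\Gamma$ so that every chip of $D$ — and, when treating a particular $q$, the point $q$ itself — lies at a vertex of a model graph $G$. Subdividing an edge leaves $g$, linear equivalence, the break condition, and $q$-orientability unchanged, so no generality is lost; from here an orientation of $\Gamma$ is an orientation of $G$, and $D_{\O}(v) = \mathrm{indeg}_{\O}(v) - 1$ as in the discrete theory.

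For the reverse implication, suppose $D - (q) = D_{\O}$ for some $q$-connected orientation $\O$. Since $\O$ is $q$-connected I can extract a spanning arborescence $T$ rooted at $q$, i.e.\ a spanning tree all of whose edges point away from $q$. Every $v \ne q$ then has exactly one incoming edge in $T$ while $q$ has none, so the number of non-tree edges directed \emph{towards} $v$ is $\mathrm{indeg}_{\O}(v) - 1 = D_{\O}(v) = D(v)$ for $v \ne q$, and $\mathrm{indeg}_{\O}(q) = D_{\O}(q) + 1 = D(q)$ at $q$. Thus each vertex $v$ receives exactly $D(v)$ of the $g$ non-tree edges, and $\sum_v D(v) = g = |E(G)| - (|V(G)|-1)$ confirms the count. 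Assigning, at each $v$, the $D(v)$ chips to the tangent directions of these incoming non-tree edges gives an injective assignment whose associated cuts delete precisely the $g$ non-tree edges, leaving the spanning tree $T$, a connected contractible space. Hence $D$ is a break divisor. Note this uses the hypothesis at a \emph{single} $q$, so already ``$D-(q)$ is $q$-orientable for some $q$'' forces $D$ to be a break divisor.

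For the forward implication I would run the correspondence backwards. Given a break structure for $D$, the assigned cuts delete a set of $g$ distinct edges and leave a spanning tree $T_0$ of $G$; fixing a vertex $q$, root $T_0$ at $q$, orient its edges away from $q$, and orient each deleted edge towards its break point. A direct indegree count gives $\mathrm{indeg}_{\O}(v) = D(v) + 1$ for $v \ne q$ and $\mathrm{indeg}_{\O}(q) = D(q)$, i.e.\ $D_{\O} = D - (q)$, while the arborescence structure makes $\O$ $q$-connected; thus $D - (q)$ is $q$-orientable. Because $T_0$ is a single global object and $q$ ranges over all vertices merely by re-rooting (after refining the model to promote $q$ to a vertex), this works for every $q$, completing the equivalence.

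The main obstacle is not the combinatorial core — the indegree bookkeeping and the arborescence extraction are routine — but the metric bookkeeping that makes the correspondence legitimate. Two points need genuine care. First, I must check that subdivision is harmless and in particular that when $q$ lies in the interior of a \emph{non-tree} edge $e$, refining at $q$ and then adjoining the correct one of the two resulting half-edges to $T_0$ produces $\mathrm{indeg}_{\O}(q) = 0 = D(q) - 1$; the naive metric orientation would instead make $q$ an interior point of a directed edge with indegree one, so this refinement is essential rather than cosmetic. Second, I must confirm that injectivity in the break definition, together with the requirement that the cut space be a tree, forces the $g$ cuts to sever $g$ \emph{distinct} edges (two chips at opposite ends of one edge would cut it twice and could never drop the genus to zero), which is exactly what lets me identify break structures with spanning trees. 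Finally, I would remark that the weaker assertion ``$D-(q)$ is linearly equivalent to a $q$-connected orientation'' is immediate from Theorem~\ref{qconexists} together with the Riemann--Roch bound $r(D)\ge 0$ for $\deg(D)=g$ (cf.\ \cite[Theorem 7.10]{mikhalkin465tropical}); the real content of Theorem~\ref{break}, and the reason the explicit construction above is required, is the upgrade from ``linearly equivalent to'' to ``equal to'', matching the canonical break representative of a class in $\mathrm{Pic}^{g}(\Gamma)$ with the canonical $q$-connected representative.
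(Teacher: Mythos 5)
The paper does not actually prove Theorem \ref{break}: it is quoted from An--Baker--Kuperberg--Shokrieh \cite{an2013canonical} and used as a black box (e.g.\ in Algorithm \ref{breakalg}), so there is no internal argument to compare yours against. Judged on its own terms, your proof is correct, and it is essentially the argument of the cited source: both implications come from the single correspondence between break structures (a spanning tree of a model of $\Gamma$ together with one marked point on each of the $g$ complementary edges) and orientations in which that tree is an arborescence rooted at $q$ while each non-tree edge points at its marked point; the bookkeeping $D_{\O}(v)={\rm indeg}(v)-1$ then yields $D_{\O}=D-(q)$ \emph{exactly}, not merely up to linear equivalence, which as you note is the whole content of the statement beyond Theorem \ref{qconexists}. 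You are also right to isolate the two genuinely metric points: that injectivity plus contractibility forces the $g$ cuts to sever $g$ distinct model edges, and that when $q$ is refined into the interior of a non-tree edge the correct half-edge must be adjoined to the tree so that $q$ becomes a source. The one step I would ask you to make explicit is the claim that ``an orientation of $\Gamma$ is an orientation of $G$'' after refinement: an orientation of $\Gamma$ may reverse direction at interior points of model edges, but any such degree-two point has indegree $0$ or $2$ and hence $D_{\O}\neq 0$ there, so once $\mathrm{supp}(D)\cup\{q\}$ are promoted to vertices the orientation really is constant on each open edge of the model. With that sentence added, the reduction to the discrete setting, and hence the whole proof, is complete.
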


An important property of break divisors is that they provide
distinguished representatives for the divisor classes of degree $g$.
Indeed, by {Theorem~\ref{break}}, the set $\{ q\mbox{-orientable divisors}
+(q)\} $ is independent of the choice of $q$. We offer the
following short proof of this fact which does not make use of {Theorem~\ref{break}}. To see that $\{ q\mbox{-orientable divisors} +(q)\} =
\{ p\mbox{-orientable divisors} +(p)\} $ it is equivalent to verify
that $\{ q\mbox{-orientable divisors} +(q)-(p)\}
 =\{ p\mbox{-orientable divisors}\} $. The former set is the collection of
divisors associated to orientations obtained from the $q$-connected
orientations by reversing a path from $q$ to $p$. It is easy to verify
that these are precisely the $p$-connected orientations.

We now describe a simple MFMC based algorithm to obtain the unique
break divisor linearly equivalent to a given divisor of degree $g$.

\begin{algorithm}\label{breakalg}
Efficient method for computing break divisors
\end{algorithm}

\noindent{\textbf{Input:}
A divisor $D$ of degree $g$.}

\noindent{\textbf{Output:}
The break divisor ${\hat{D}} \sim D$.}

\bigskip

Take $q \in V(G)$, and let $D'$ be the divisor of degree $g-1$ with $D'
= D - (q)$. Take $\mathcal{O}$ an arbitrary orientation and construct an
auxiliary network for $D'$ as in the proof of {Theorem~\ref{EC}}: Take
${\tilde{D}} = D' - D_{\mathcal{O}}$ and let ${\tilde{D}}^{+},
{\tilde
{D}}^{-} \geq{\vec{0} }$ be divisors with disjoint supports such that
${\tilde{D}}^{+} - {\tilde{D}}^{-} = {\tilde{D}}$. Let $S$ and $T$
be the support of ${\tilde{D}}^{+}$ and ${\tilde{D}}^{-}$,
respectively. Add two auxiliary vertices $s$ and $t$ with directed
edges from $s$ to each vertex in $S$ and from each vertex in $T$ to
$t$. For each $s' \in S$ and $t' \in T$ we give the edges $(s,s')$ and
$(t,t')$ capacities ${\tilde{D}}^{+}(s')$ and ${\tilde{D}}^{-}(t' )$,
respectively. We can perform any preferred MFMC algorithm which
produces an integral maximum flow $f$ in this network. Reverse all of
the edges in the support of $f$ and update the capacities of the edges
from $s$ to $S$ and $T$ to $t$ to be the residual capacities, i.e., the
original capacities minus the flow value of $f$ on these edges. We've
now obtained a new network such that either when restricted to $G$
gives an orientation $\mathcal{O}'$ with $D_{\mathcal{O}'} = D -
(q)$, or the network
has a directed cut separating $s$ and $t$ which is oriented towards
$s$. We can then reverse this cut and then look for a flow from $s$ to
$t$. Alternating between flow reversals and cut reversals, we
eventually arrive at some orientation $\mathcal{O}''$ such that
$D_{\mathcal{O}''} \sim
D-(q)$. If $\mathcal{O}''$ is not $q$-connected, we may execute
further cut
reversals to obtain a $q$-connected orientation $\mathcal{O}_{q}$. By {Theorem~\ref{break}}, $D_{\mathcal{O}_{q}} +(q) = \hat{D}$ is the break divisor
linearly equivalent to $D$.

\bigskip
\textbf{Correctness:} We first argue that the process terminates. Suppose
that there is no path from $s$ to $t$. We can reverse a cut oriented
towards $s$, and by induction on the size of $\bar{s}$, this will
eventually terminate. Otherwise, there exists a nonzero integral flow exists in
our auxiliary network, which can be reversed.  By induction on ${\tilde{D}}^{+}$ the algorithm terminates.

To see that this process terminates in polynomial time, one can apply
arguments similar to those in \cite{baker2013chip}. This algorithm can
also be sped up by the following preprocessing step. Given $D'$, we can
find $D'' \sim D'$ in polynomial time, which has
bounded size. For example, we can run Algorithm 4 from \cite{baker2013chip}
 to find $D'' \sim D'$ which is
$q$-reduced. It is clear that {Algorithm~\ref{breakalg}} 
applied to $D''$ will terminate in
polynomial time.

\bigskip

By the work of ABKS \cite{an2013canonical}, this method for generating
break divisors can, in principle, be converted into an efficient method
for generating random spanning trees.

Given a divisor $D$ with $\mathrm{deg}(D) \leq g-1$,
{Algorithm~\ref{construct}} provides a method for
constructing a partial orientation
$\mathcal{O}$ with $D_{\mathcal{O}} \sim D$, whenever possible. We
now present an
alternate algorithm which integrates MFMC.

\begin{algorithm}\label{2nd}
A second construction of partial orientations
\end{algorithm}
\noindent
\textbf{Input:} A divisor $D$ with $\mathrm{deg}(D) \leq g-1$.

\noindent
\textbf{Output:} A divisor $D'\sim D$ and a partial orientation $\mathcal
{O}$ such
that either

$(i)$ $D' = D_{\mathcal{O}}$ or

$(ii)$
 $D' \leq D_{\mathcal{O}}$ with $\mathcal{O}$ acyclic which
guarantees that $D$ is
not linearly equivalent to a partially orientable divisor.

\bigskip

Take $D$ with $\mathrm{deg}(D) \leq g-1$, and let $D' = D+E$ with $E\geq
0$ and $\mathrm{deg}(D') = g-1$. First, obtain $\mathcal{O}$ with
$D_{\mathcal{O}} \sim
D'$ by the method described in {Algorithm~\ref{breakalg}}, alternately
reversing flows obtained via some MFMC algorithm, and reversing cuts.
Then perform the modified unfurling {Algorithm~\ref{mod}} to obtain an
orientation with some edge pointed towards a vertex in the support of
$E$. We unorient this edge, subtract a chip from $E$ and repeat.
Eventually we either obtain a partial orientation $\mathcal{O}'$ with
$D_{\mathcal{O}'}
\sim D$ or $\mathcal{O}'$ acyclic and $D_{\mathcal{O}'} \geq D'$ with
$D' \sim D$
which, by the correctness of {Algorithm~\ref{mod}}, guarantees that $D$
is not linearly equivalent to a partially orientable divisor.

\bigskip
\textbf{Correctness:}
This follows directly from {Algorithm~\ref{breakalg}} and the
correctness of {Algorithm~\ref{mod}}.

\bigskip

Motivated by our proof of {Theorem~\ref{EC}}, and by Algorithm \ref{breakalg}, we conclude with the following result. Given a set $X$ and a group $G$, we
say that $X$ is a $G$-\textit{torsor} if $X$ is equipped with a simply
transitive action of $G$.

\begin{theorem}\label{torsor}
The set $\mathrm{Pic}^{g-1}(G)$ is canonically isomorphic as a $\mathrm{Pic}^{0}(G)$-torsor to the collection of equivalence classes in the
cycle--cocycle reversal system acted on by path reversals.
\end{theorem}

\begin{proof}
Let $S$ denote the collection of equivalence classes of full
orientations in the cycle--cocycle reversal system.
By {Corollary~\ref{full}} and {Theorem~\ref{generalizedcc}},
 we can canonically identify
the sets $S$ and $\mathrm{Pic}^{g-1}(G)$. Let $p,q \in V(G)$, $[\mathcal
{O}] \in
S$, and $\mathcal{O}_{q}$ be a $q$-connected orientation in $[\mathcal
{O}]$, which
exists by {Lemma~\ref{q-full}}. The divisor $(q)-(p)$ maps $[\mathcal
{O}]$
to $[\mathcal{O}_{p}]$, where $\mathcal{O}_{p}$ is obtained from
$\mathcal{O}_{q}$ by
reversing the path from $q$ to $p$. This action is well-defined since
$D_{\mathcal{O}_{q}} +(q) - (p) = D_{\mathcal{O}_{p}}$. By linearity,
this map extends
to an action of $\mathrm{Div}^{0}(G)$ on $S$. Moreover, this action
respects linear equivalence, and hence defines an action of $\mathrm{Pic}^{0}(G)$ on $S$.
\end{proof}

\section{Acknowledgements}
Many thanks to Matt Baker for orienting the author towards the study of
orientations and for engaging conversations as well as a careful
reading of an early draft. Additional thanks to Olivier Bernardi from
whose FPSAC lecture we learned of Stefan Felsner's previous work, and
to Sergey Norin for his encouragement. Finally, thanks to the anonymous
referee for many excellent suggestions which helped to greatly improve
the presentation herein.

The author was partially supported by a 
Graduate Student Research Assistantship, by the 
(FP7/2007-2013)/ERC Grant Agreement no. {279558}, and by the 
{Center for Application of Mathematical Principles} at the National Institute of
Mathematical Sciences in South Korea during the Summer 2014 Program on
\textit{Applied Algebraic Geometry}.

%
%
%
%
\bibliography{Riemann}

%
%
\end{document}